\def\scr{\EuScript}
\long\def\inhibe#1\endinhibe{\relax}
\newcommand{\CC}{\mathbb{C}}
\newcommand{\RR}{R}
\newcommand{\DD}{\mathcal{D}}
\newcommand{\OO}{\mathcal{O}}
\newcommand{\EE}{\mathcal{E}}
\newcommand{\calJ}{\mathcal{J}}
\def\calV{\mathcal{V}}
\def\calK{\mathcal{K}}
\def\calL{\mathcal{L}}
\def\calM{\mathcal{M}}
\def\calQ{\mathcal{Q}}
\newcommand{\Lotimes}{\stackrel{\mathbf{L}}{\otimes}}
\newcommand{\sbullet}{{\hspace{.1em}\scriptstyle\bullet\hspace{.1em}}}
\DeclareMathOperator{\U}{{\bf U}}
\DeclareMathOperator{\Rees}{Rees}
\DeclareMathOperator{\Sim}{Sym}
\DeclareMathOperator{\Der}{Der}
\DeclareMathOperator{\Hom}{Hom}
\DeclareMathOperator{\Ext}{Ext}
\DeclareMathOperator{\SP}{Sp}
\DeclareMathOperator{\fDer}{\mathcal{D}\mathit{er}} 
\DeclareMathOperator{\End}{End}
\DeclareMathOperator{\gr}{\rm gr}
\DeclareMathOperator{\im}{Im}
\DeclareMathOperator{\codim}{codim}
\newcommand{\pcirc}{{\scriptstyle \,\circ\,}}
\newcommand\Id{{\rm Id}}
\DeclareMathOperator{\ann}{\rm ann}
\newcommand{\cd}[3]{D^{#1}_{#2}(#3)}
\DeclareMathOperator{\Dual}{\mathbb{D}}
\newcounter{numero}[section]
\renewcommand{\thenumero}{(\thesection .\arabic{numero})}
\newenvironment{corollary}{\medskip
\refstepcounter{numero}\noindent {\sc  \thenumero\ Corollary.}\
\it}{\vspace{1ex}\par}
\newenvironment{theorem}{\medskip
\refstepcounter{numero}\noindent {\sc  \thenumero\ Theorem.}\
\it}{\vspace{1ex}\par}
\newenvironment{lemma}{\medskip
\refstepcounter{numero}\noindent {\sc  \thenumero\ Lemma.}\
\it}{\vspace{1ex}\par}
\newenvironment{definition}{\medskip
\refstepcounter{numero}\noindent {\sc  \thenumero\ Definition.}\
\it}{\vspace{1ex}\par}
\newenvironment{proposition}{\medskip
\refstepcounter{numero}\noindent {\sc  \thenumero\ Proposition.}\
\it}{\vspace{1ex}\par}
\newenvironment{remark}{\medskip
\refstepcounter{numero}\noindent {\sc  \thenumero\ Remark.}\
}{\vspace{1ex}\par}
\newenvironment{example}{\medskip
\refstepcounter{numero}\noindent {\sc  \thenumero\ Example.}\
}{\vspace{1ex}\par}
\newenvironment{proof}{
\noindent {\sc  Proof.}\ }{\hfill Q.E.D.\vspace{1ex}\par}
\newenvironment{question}{\medskip
\refstepcounter{numero}\noindent {\sc  \thenumero\ Question.}\
}{\vspace{1ex}\par}
\newcommand{\numero}{\refstepcounter{numero}\noindent {\sc  \thenumero\ }}
\title{A duality approach to the symmetry of Bernstein-Sato polynomials of free divisors}
\author{Luis Narv\'{a}ez Macarro\thanks{Partially supported by MTM2010-19298, P12-FQM-2696, MTM2013-46231-P
 and FEDER.}\\
Departamento de \'Algebra \& Instituto de Matem\'aticas (IMUS)\\ University of Sevilla}
\date{June 2015}
\begin{document}

\maketitle

\begin{abstract}
In this paper we prove that the Bernstein-Sato polynomial of any free divisor for which the $\DD[s]$-module $\DD[s] h^s$ admits a Spencer logarithmic resolution satisfies the symmetry property $b(-s-2) = \pm b(s)$. This applies in particular to locally quasi-homogeneous free divisors (for instance, to free hyperplane arrangements), or more generally, to free divisors of linear Jacobian type. We also prove that the Bernstein-Sato polynomial of an integrable logarithmic connection $\EE$ and of its dual $\EE^*$ with respect to a free divisor of linear Jacobian type are related by the equality $b_{\EE}(s)=\pm b_{\EE^*}(-s-2)$. Our results are based on the behaviour of the modules $\DD[s] h^s$ and $\DD[s] \EE[s]h^s $ under duality.
\medskip

\noindent Keywords: Bernstein-Sato polynomials, free divisors, logarithmic differential operators, Spencer resolutions, Lie-Rinehart algebras, logarithmic connections.\\
\noindent {\sc MSC: 14F10, 32C38}
\end{abstract}

\section*{Introduction}

In \cite{gran-schul-RIMS-2010} Granger and Schulze proved that the Bernstein-Sato polynomial of any reductive prehomogeneous determinant or of any regular special linear free  divisor satisfies the equality $b(-s-2) = \pm b(s)$. Their proof is based on Sato's fundamental theorem for irreducible reductive prehomogeneous spaces. This symmetry property has been also checked for many other examples of linear (see for instance \cite{gran-schul-RIMS-2010} and \cite{sevenheck_2011}) and non-linear free divisors (e.g. quasi-homogeneous plane curves and the examples in \cite{nakayama_sekiguchi}). In this paper we prove the above symmetry property for free divisors for which the $\DD[s]$-module $\DD[s] h^s$ admits a logarithmic Spencer resolution (see Theorem \ref{teo:main-sym} for a precise statement). This hypothesis holds for any free divisor of linear Jacobian type, and so for any locally quasi-homogeneous free divisor (for instance, free hyperplane arrangements or discriminants of stable maps \cite[Corollary 6.13]{loo_84} in Mather's ``nice dimensions'' \cite{mather_nice_dim}; ``nice dimensions'' are those dimensions of source and target manifolds for which stable proper mappings are dense in the proper mappings).

The main ingredient of the proof is the explicit description of the $\DD[s]$-dual of $\DD[s] h^s$ by means of the logarithmic duality formula in \cite{calde_nar_fourier,calde_nar_ferrara}.
\medskip

Let us mention that for any quasi-homogeneous germ $h:(\CC^d,0) \to (\CC,0)$ with isolated singularity,
 its reduced Bernstein-Sato polynomial $\widetilde{b}(s) = \frac{b(s)}{s+1}$ satisfies the equality
$ \widetilde{b}(s)=\pm \widetilde{b}(-s-d)$.
This result and ours suggest that both are extremal cases of a whole family of ``pure'' cases where symmetry properties occur with other intermediate shiftings (see Question \ref{question:sym-gen}). One can expect even that in the ``non-pure'' cases, the factors of the Bernstein-Sato polynomial which break the symmetry appear as minimal polynomials of the action of $s$ on other $\DD[s]$-modules attached to our singularity (see for instance the examples in 
\cite[\S 3]{narvaez-contemp-2008}), possibly related with the microlocal structure.
\medskip

Let us now comment on the content of the paper.
\medskip

In section \S 1 we recall the different conditions and hypotheses on free divisors we will use throughout the paper.
In section \S 2 we recall the logarithmic Bernstein construction and we study the hypotheses we will need later to prove our main results.
In section \S 3 we apply the duality formula in \cite{calde_nar_ferrara} to describe the $\DD[s]$-dual of $\DD[s] h^{\varphi(s)}$, where $\varphi$ is a $\CC$-algebra automorphism of $\CC[s]$, under the hypotheses studied in section \S 2.
In section \S 4 we prove the symmetry property $b(-s-2) = \pm b(s)$ under the above hypotheses. The idea of the proof is the following: once we know that the 
$\DD[s]$-dual of $\DD[s] h^{s}$ (resp. of $\DD[s] h^{s+1}$) is concentrated in degree $0$ and is isomorphic to $\DD[s] h^{-s-1}$ (resp. to $\DD[s] h^{-s-2}$), we can compute the $\DD[s]$-dual of the exact sequence
$$ 0 \to \DD[s] h^{s+1} \to \DD[s] h^s \to \calQ := \left(\DD[s] h^s\right) / \left(\DD[s] h^{s+1}\right) \to 0$$
and deduce that the $\DD[s]$-dual of $\calQ$ is concentrated in degree $1$ and is isomorphic to $\DD[s] h^{-s-2}/\DD[s] h^{-s-1}$. From here the symmetry property comes up. At the end of the section we give some applications to the logarithmic comparison problem and a characterization of the logarithmic comparison theorem for Koszul free divisors.
In section \S 5 we generalize the above results to the case of integrable logarithmic connections with respect to free divisors of linear Jacobian type. In section \S 6 we have included some open questions dealing with the relationship between the results of \cite{gran-schul-RIMS-2010} and ours, and with the symmetry properties of (reduced) Bernstein-Sato polynomials in the non-free case. Finally, and for the ease of the reader, we have included an Appendix A with a detailed proof of the duality formula \ref{teo:duality-ferrara}, and the needed notions and results about Lie-Rinehart algebras. This material includes a simple proof of the associativity law (see Theorem \ref{teoapp:main-1} and Corollary \ref{coro3:main-1} ), the original proof in \cite{calde_nar_ferrara} being unpleasant.
\medskip

I would like to thank Francisco Castro, Michel Granger and Mathias Schulze for useful discussions and comments. I would also like to thank the referees for their comments. 

\section{Notations and linearity conditions}

In this paper $X$ will denote a complex manifold of pure dimension $d$, $D\subset X$ a hypersurface (= divisor), $\OO_X[\star D]$ the sheaf of meromorphic functions along $D$, $\OO_X(D)$ the sheaf of meromorphic functions along $D$ with poles of order $\leq 1$ and $\DD_X$ the sheaf of linear differential operators with coefficients in $\OO_X$. On $\DD_X[s]$ we will consider two filtrations: the one induced by the usual order filtration $ \DD_X[s]^i = \DD_X^i[s]$, and the {\em total order filtration} $F_T^i \DD_X[s] = \oplus_{p+q=i} \DD_X^p s^q$, $i\geq 0$. The associated graded rings to both filtrations are isomorphic to $(\gr \DD_X)[s]$, but the corresponding gradings on the last sheaf are different.
\medskip

We will also denote by $\calJ_D\subset \OO_X$ the Jacobian ideal of $D\subset X$, i.e. the coherent ideal  of $\OO_X$ whose stalk at any
$p\in X$ is the ideal  generated by $h,h'_{x_1},\dots,h'_{x_d}$,
where $h\in \OO_{X,p}$ is any reduced local equation of $D$ at $p$ and
$x_1,\dots,x_d\in \OO_{X,p}$ is a system of local coordinates centered at $p$.
\medskip

We recall that $D$ is a {\em free divisor}, in the sense of K. Saito \cite{ksaito_log}, if the coherent $\OO_X$-module $\fDer_\CC(-\log D)$ of logarithmic vector fields with respect to $D$ is locally free (of rank $d$). In such a case we will denote by $\calV_X=\OO_X[\fDer_\CC(-\log D)] \subset \DD_X$ the sheaf of logarithmic differential operators with respect to $D$ \cite{calde_ens}. 

\begin{definition} (Cf. \cite[{\S}7.2]{vascon_cmcaag}) Let $A$ be a commutative ring
and $I\subset A$ an ideal. We say that $I$ is of {\em linear type}
if the canonical (surjective) map of graded $A$-algebras $ \Sim_A(I)
\to \Rees(I)$ is an isomorphism.
\end{definition}

In the above definition, if $I=(a_1,\dots,a_r)$ and $(s_{i1},\dots,s_{ir})$, $i\in L$, is a system of generators of the syzygies of $a_1,\dots,a_r$, to say that the ideal $I$ is of linear type is equivalent to saying that any homogeneous polynomial $F(\xi_1,\dots,\xi_r)\in A[\xi]$ such that $F(a_1,\dots,a_r)=0$ is a linear combination with coefficients in $A[\xi]$ of the linear forms $s_{i1}\xi_1+\cdots +s_{ir} \xi_r$, $i\in L$.

\begin{example} \label{ejemplo:regular-sequence} (\cite[Th\'eor\`eme 1]{micali_64}; see also \cite[Proposition 2.4]{calde_nar_compo}) An ideal generated by a regular sequence is of linear type.
\end{example}

\begin{definition} (See \cite[Definitions 1.11, 1.14]{calde_nar_lct_ilc}.)  (a) We say that the divisor $D$ is of {\em linear Jacobian type} at $p\in D$ if $\calJ_{D,p}\subset \OO_{X,p}$ is of linear type. We say that $D$ is of {\em linear Jacobian type} if it is so at any $p\in D$.\medskip

\noindent (b) We say that the divisor $D$ is of {\em differential linear type} at $p\in D$ if for some (and hence for any) reduced local equation $h\in \OO_{X,p}$ of $D$ at $p$, the ideal $\ann_{\DD_{X,p}[s]} h^s$ is generated by order 1 operators (with respect to the usual or to the total order filtration). We say that $D$ is of differential linear type if it is so at any $p\in D$.
\end{definition}

The following Proposition is proven in \cite[Proposition 1.15]{calde_nar_lct_ilc}.

\begin{proposition} \label{prop:LJT->DLT}
Any divisor of linear Jacobian type is of differential linear type.
\end{proposition}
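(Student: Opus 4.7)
My plan is to argue locally at any $p \in D$ — fixing a reduced local equation $h \in \OO_{X,p}$ and coordinates $x_1,\dots,x_d$ — and to show by induction on the total order that every element of $\ann_{\DD_{X,p}[s]} h^s$ is a left $\DD_{X,p}[s]$-combination of order-$1$ annihilators. The starting point is that every linear syzygy $a_0 h + a_1 h_1 + \cdots + a_d h_d = 0$ of the generators of $\calJ_{D,p}$ produces the order-$1$ annihilator $Q_a := a_0 s + \sum_i a_i \partial_i$, since $Q_a \cdot h^s = s h^{s-1}(a_0 h + \sum_i a_i h_i) = 0$.

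The main step is to recognise the principal total symbol of an arbitrary element of $\ann_{\DD[s]} h^s$ as arising from a relation of this type. Identifying $\gr^T \DD_{X,p}[s] \cong \OO_{X,p}[\xi_0, \xi_1, \ldots, \xi_d]$ with $\xi_0 = \sigma_T(s)$ and $\xi_i = \sigma_T(\partial_i)$, I will show that the principal symbol of any $P \in \ann_{\DD[s]} h^s$ of total order $N$ lies in the kernel of the canonical Rees surjection
$$\OO_{X,p}[\xi_0, \xi_1, \ldots, \xi_d] \twoheadrightarrow \Rees(\calJ_{D,p}),\qquad \xi_0 \mapsto h\,t,\ \xi_i \mapsto h_i\,t.$$
To establish this, I would expand $P = \sum p_{k,\alpha}(x) s^k \partial^\alpha$, use the leading identity $\partial^\alpha h^s = s^{|\alpha|}(h')^\alpha h^{s-|\alpha|} + (\text{lower in } s)$ to extract the coefficient of $s^N$ in $P h^s$, and multiply through by $h^{N-s}$. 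The vanishing of $P h^s$ then forces $\sum_{k+|\alpha|=N} p_{k,\alpha}(x)\, h^{N-|\alpha|}(h')^\alpha = 0$ in $\OO_{X,p}$, which is precisely the vanishing in $\Rees(\calJ_{D,p})$ of the image of $\sigma_T(P) = \sum_{k+|\alpha|=N} p_{k,\alpha}\,\xi_0^k\, \xi^\alpha$.

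With this in hand, the linear Jacobian type hypothesis identifies the kernel of the Rees map with the ideal generated by the linear syzygies. Thus $\sigma_T(P) = \sum_i G_i L_i$ with each $L_i$ a linear syzygy and each $G_i$ homogeneous of degree $N-1$. I would lift each $L_i$ to an order-$1$ annihilator $Q_i$ and each $G_i$ to $\widetilde{G_i} \in F_T^{N-1} \DD_{X,p}[s]$; by the commutativity of $\gr^T \DD_{X,p}[s]$ we get $\sigma_T\bigl(P - \sum_i \widetilde{G_i} Q_i\bigr) = 0$, so the remainder lies in $F_T^{N-1}\ann_{\DD[s]} h^s$ and induction closes (the base case $N=0$ is immediate, since $\OO_{X,p}$ acts faithfully on $h^s$). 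The main obstacle I anticipate is the symbol calculation linking $P h^s = 0$ to a relation in $\Rees(\calJ_{D,p})$; the rest is a standard symbol-reduction argument made available by the very definition of linear type.
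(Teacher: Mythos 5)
Your proof is correct. The paper itself does not give an argument here; it only cites \cite[Proposition 1.15]{calde_nar_lct_ilc}. Your symbol-reduction scheme --- show by the leading-coefficient-in-$s$ computation that $\sigma_T(P)$ lies in the kernel of the Rees surjection $\OO_{X,p}[\xi_0,\dots,\xi_d]\to\Rees(\calJ_{D,p})$ for every $P\in\ann_{\DD_{X,p}[s]}h^s$, invoke the linear-type hypothesis to write $\sigma_T(P)$ as a combination of the linear syzygy symbols, lift each to an order-$1$ annihilator of $h^s$, and close by downward induction on total order --- is precisely the Rees-algebra identification that the cited proposition rests on, so you have reconstructed essentially the same argument.
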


\begin{definition}  (a) We say that the divisor $D$ is {\em strongly Euler homogeneous} if for any $p \in D$ and for some (and hence for any) reduced local equation $h\in \OO_{X,p}$  of $D$ at $p$ there is a germ of vector field $\chi$ at $p$ vanishing at $p$ such that $\chi(h) = h$.
\medskip

\noindent (b) We say that the divisor $D$ is {\em locally quasi-homogeneous} if for any $p \in D$ there is a system of local coordinates $x=(x_1,\dots,x_d)$ centered at $p$ such that the germ $(D,p)$ has a reduced weighted homogeneous defining equation (with strictly positive weights) with respect to $x$.
\end{definition}

The following theorem has been proven in \cite[Theorem 5.6]{calde_nar_compo}.

\begin{theorem} \label{theo:LQH->LJT}
Any locally quasi-homogeneous free divisor is of linear Jacobian type.
\end{theorem}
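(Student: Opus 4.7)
Since the condition is local, I fix $p \in D$, set $\OO := \OO_{X,p}$, and choose coordinates $x_1,\ldots,x_d$ centered at $p$ for which a reduced equation $h$ of $D$ is weighted homogeneous of strictly positive weights $w_1,\ldots,w_d$ and weighted degree $d_0>0$. Writing $h_i := \partial_i h$ and $\chi_E := \sum w_i x_i \partial_i$, Euler's identity $\chi_E(h)=d_0 h$ gives $h\in(h_1,\ldots,h_d)$, so $\calJ_{D,p}=(h_1,\ldots,h_d)$, and it suffices to prove this ideal is of linear type. A tuple $(b_1,\ldots,b_d)\in\OO^d$ is a syzygy of $(h_1,\ldots,h_d)$ exactly when $\chi := \sum b_i\partial_i$ kills $h$, in which case $\chi$ is automatically in $\fDer_\CC(-\log D)_p$. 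Using freeness of $D$ and the fact that $d_0$ is a unit, I would produce a Saito basis of the form $\chi_E,\delta_2,\ldots,\delta_d$ with $\delta_j(h)=0$ for all $j\geq 2$: starting from any Saito basis $\eta_1,\ldots,\eta_d$, write $\eta_i(h)=a_i h$ and replace $\eta_i$ by $\eta_i - d_0^{-1}a_i\chi_E$, checking via Saito's determinantal criterion that the new tuple is still a basis. The syzygy module of $(h_1,\ldots,h_d)$ is then the free $\OO$-module of rank $d-1$ spanned by the coefficient vectors of $\delta_2,\ldots,\delta_d$.

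Introducing indeterminates $\xi_1,\ldots,\xi_d$ and the linear forms $L_j := \sum_i c_{ji}\xi_i$ associated to $\delta_j=\sum c_{ji}\partial_i$, linear type amounts to showing that the kernel of the evaluation map $\OO[\xi]\to\Rees(\calJ_{D,p})$, $\xi_i\mapsto h_i T$, is generated by $L_2,\ldots,L_d$. Equivalently, these linear forms must capture all polynomial syzygies, not only the linear ones. My plan is to endow $\OO[\xi]$ with the grading $\deg\xi_i = d_0 - w_i$, compatible with quasi-homogeneity so that evaluation is homogeneous, and to prove that $L_2,\ldots,L_d$ form a regular sequence in $\OO[\xi]$. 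The symmetric algebra $\Sim_\OO(\calJ_{D,p})=\OO[\xi]/(L_2,\ldots,L_d)$ would then share the Hilbert series of $\Rees(\calJ_{D,p})$, forcing the canonical surjection to be injective as well.

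The main obstacle is establishing this regular-sequence statement for $L_2,\ldots,L_d$. The approach I would try is to realise the Koszul complex on $L_2,\ldots,L_d$ as the principal-symbol level of a logarithmic Spencer complex attached to the sub-Lie--Rinehart algebra of $\fDer_\CC(-\log D)_p$ annihilating $h$, and to leverage the consequences of local quasi-homogeneity (strong Euler homogeneity, codimension $\geq 2$ of the singular locus, and compatibility of $\fDer_\CC(-\log D)$ with the stratification of $D$) to deduce acyclicity stratum-by-stratum. An alternative would be Huneke's criterion: prove that $h_1,\ldots,h_d$ form a $d$-sequence, i.e.\ $(h_1,\ldots,h_{i-1}):h_i=(h_1,\ldots,h_{i-1}):h_i^k$ for all $i$ and $k$. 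In either route, the essential extra ingredient beyond what is made explicit in the excerpt is a nontrivial acyclicity/depth statement that fuses the freeness of $D$ with the grading supplied by local quasi-homogeneity.
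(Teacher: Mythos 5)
Your proposal carries out the easy preliminary reductions correctly: reducing to $\calJ_{D,p}=(h'_{x_1},\dots,h'_{x_d})$ via Euler's identity, producing a Saito basis $\chi_E,\delta_2,\dots,\delta_d$ with $\delta_j(h)=0$ for $j\geq 2$, identifying the syzygy module of $(h'_{x_1},\dots,h'_{x_d})$ with the free $\OO$-module on $\delta_2,\dots,\delta_d$, and translating linear type into the statement that $\ker(\OO[\xi]\to\Rees(\calJ_{D,p}))=(L_2,\dots,L_d)$. All of this is standard and agrees with the setup used in the paper it cites (\cite[Theorem 5.6]{calde_nar_compo}), and with the adaptation of that proof the author recalls later (in the proof of implication (a)$\Rightarrow$(b) of the theorem on Koszul free divisors). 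But you explicitly stop at the point where the real work begins: you do not prove that $L_2,\dots,L_d$ form a regular sequence in $\OO[\xi]$, and you do not establish the degree-$0$ exactness. These are exactly the nontrivial steps, and without them there is no proof.

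There is also a second gap, in the step you present as routine: even granting that $L_2,\dots,L_d$ is a regular sequence, the Hilbert-series comparison you appeal to is not justified. You would need to compute the Hilbert series of $\Rees(\calJ_{D,p})$, which amounts to understanding the graded pieces $\calJ_{D,p}^n$ of the ideal powers, and that is essentially the statement to be proved. The argument used in \cite{calde_nar_compo} (and rehearsed in the body of the present paper) is quite different at this point: one knows, via the Koszul property, that the augmented Koszul complex on $\sigma(\delta_2),\dots,\sigma(\delta_d)$ is exact in all degrees except possibly degree $0$; one then proves degree-$0$ exactness by an induction on the ambient dimension (integrating a nonvanishing logarithmic vector field at points $q\neq 0$ to reduce to $d-1$) combined with a local-cohomology argument on a small open neighborhood of $p$ (using $H^i_0(\OO_X)=0$ for $i\neq d$ and the fact that the sheaves $\im d^m_{-1}$ and $\gr^m\DD_X$ have no sections supported at the origin). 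This patching argument is sheaf-theoretic and is not captured by Hilbert-series counting at the stalk.

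So the proposal is not a proof: it delegates to a ``nontrivial acyclicity/depth statement'' both the Koszul (regular-sequence) property of a locally quasi-homogeneous free divisor and the degree-$0$ exactness of the resulting Koszul complex, and it replaces the latter by a Hilbert-series heuristic that does not go through. Your first fallback sketch (acyclicity stratum-by-stratum via the logarithmic stratification) is in fact the right idea and is morally the proof in \cite{calde_nar_compo}, but to make it rigorous one must set up the induction on dimension and the local-cohomology patching; the $d$-sequence alternative is not obviously available here and, even if true for these ideals, would itself require a proof.
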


We do not know any example of a free divisor of linear Jacobian type which is not locally quasi-homogeneous.

\begin{remark}  \label{nota:strong-E-LJT-factor}
There is also the notion of {\em Euler homogeneity}. Namely, we say that the divisor $D$ is {\em Euler homogeneous} at a point $p\in D$ if there is a reduced local equation $h\in \OO_{X,p}$ of $D$ at $p$ and a germ of vector field $\chi$ at $p$ (not necessarily vanishing at $p$) such that $\chi(h) = h$. In that case we also say that $h$ is Euler homogeneous. It is clear that if $D$ is Euler homogeneous at $p$, then it is also Euler homogeneous at any point $q\in D$ close enough to $p$. Notice that not any local reduced equation of an Euler homogeneous divisor is Euler homogeneous. Notice also that for any divisor $D\subset X$, which might not be Euler homogeneous, the divisor $D'=D\times 	\CC \subset X'=X\times \CC$ is always Euler homogeneous. Nevertheless, a divisor $D\subset X$ is strongly Euler homogeneous if and only if $D'=D\times \CC \subset X'=X\times \CC$ is strongly Euler homogeneous. Let us also notice that a divisor $D\subset X$ is of linear Jacobian type at $p\in D$ if and only $D'=D\times \CC \subset X'=X\times \CC$ is of linear Jacobian type at $(p,0)\in D'$.
\end{remark}

\begin{example} Any smooth hypersurface is of linear Jacobian type. More generally, any quasi-homogeneous (with strictly positive weights) isolated singularity is of linear Jacobian type, since any reduced equation $h$ belongs to the ideal generated by the partial derivatives and so the Jacobian ideal is generated by the regular sequence $h'_{x_1},\dots,h'_{x_d}$ (see Example \ref{ejemplo:regular-sequence}).
\end{example}

\begin{proposition} \label{prop:LJT->SEH} If the divisor $D$ is of linear Jacobian type, then it is strongly Euler homogeneous.
\end{proposition}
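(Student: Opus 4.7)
The argument naturally splits into a trivial smooth case and a more substantial singular case that exploits linear type. At a smooth point $p\in D$, some partial derivative $h'_{x_k}$ is a unit in a neighbourhood of $p$, so the germ $\chi:=(h/h'_{x_k})\partial_{x_k}$ is well defined, vanishes at $p$ (because $h(p)=0$) and satisfies $\chi(h)=h$; thus strong Euler homogeneity at smooth points holds without any further hypothesis.

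Assume now that $p$ is a singular point, so that every $h'_{x_i}(p)=0$ and hence $\calJ_p\subset\mathfrak{m}_p$, where $\mathfrak{m}_p$ denotes the maximal ideal of $\OO_{X,p}$. My first step is to reformulate strong Euler homogeneity at $p$ as the ideal-theoretic statement $h\in\mathfrak{m}_p\calJ_p$: any expression $h = m_0h+\sum_i m_i h'_{x_i}$ with $m_j\in\mathfrak{m}_p$ yields $(1-m_0)h=\sum_i m_ih'_{x_i}$, and inverting the unit $1-m_0$ produces the logarithmic derivation $\chi:=(1-m_0)^{-1}\sum_i m_i\partial_{x_i}$, which vanishes at $p$ and satisfies $\chi(h)=h$; the converse is immediate.

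The core of the argument is then to deduce $h\in\mathfrak{m}_p\calJ_p$ from linear type. Since the natural surjection $\Sim_{\OO_{X,p}}(\calJ_p)\to\Rees(\calJ_p)$ is an isomorphism by hypothesis, reducing modulo $\mathfrak{m}_p$ identifies the fibre cone $\Rees(\calJ_p)\otimes_{\OO_{X,p}}\CC$ with the symmetric algebra $\Sim_\CC(\calJ_p/\mathfrak{m}_p\calJ_p)$, which is a polynomial ring in $\mu(\calJ_p)$ variables; its Krull dimension coincides with the analytic spread $\ell(\calJ_p)$, and is bounded by $\dim\OO_{X,p}=d$. Hence $\mu(\calJ_p)\leq d$, so among the $d+1$ generators $h, h'_{x_1},\ldots,h'_{x_d}$ of $\calJ_p$ at least one lies in $\mathfrak{m}_p\calJ_p$.

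The principal obstacle, and the step where the free-divisor hypothesis enters essentially, is to guarantee that the redundant generator may be chosen to be $h$ itself and not merely some $h'_{x_j}$. I would use the identification of the syzygy module of $(h,h'_{x_1},\ldots,h'_{x_d})$ with $\fDer_\CC(-\log D)_p$ via $\chi\leftrightarrow(a(\chi),-\chi^1,\ldots,-\chi^d)$ where $\chi(h)=a(\chi)h$; under this correspondence the tautological Koszul-type syzygies $(h'_{x_i},0,\ldots,-h,\ldots,0)$ (arising from $h\cdot h'_{x_i}=h'_{x_i}\cdot h$) correspond to the vector fields $h\partial_{x_i}$ and have all entries in $\mathfrak{m}_p$ at a singular point, so they drop out modulo $\mathfrak{m}_p$. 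The non-trivial $\CC$-linear dependence among generator classes produced in the previous step must therefore originate from a Saito-type syzygy, and linear type — which forces the kernel of $\OO_{X,p}[\xi_0,\ldots,\xi_d]\to\Rees(\calJ_p)$ to be generated by the linear syzygies — allows one to extract a logarithmic derivation vanishing at $p$ whose Euler factor is a unit at $p$. Dividing by this unit yields the required vector field witnessing strong Euler homogeneity.
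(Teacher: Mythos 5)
Your reformulation in the singular case—that strong Euler homogeneity at $p$ is equivalent to $h\in\mathfrak{m}_p\calJ_{D,p}$—is correct and elegant, and your dimension count (linear type $\Rightarrow$ fibre cone $\cong\Sim_\CC(\calJ_p/\mathfrak{m}_p\calJ_p)$ $\Rightarrow$ $\mu(\calJ_p)=\ell(\calJ_p)\leq d$) does correctly establish that the $d+1$ generators $h,h'_{x_1},\dots,h'_{x_d}$ are not a minimal generating system. But the final step contains a genuine gap. What the dimension count produces is a $\CC$-linear relation among the classes of the generators in $\calJ_p/\mathfrak{m}_p\calJ_p$; lifting it gives a syzygy $(c_0+a_0,\dots,c_d+a_d)$ with $c_j\in\CC$ not all zero and $a_j\in\mathfrak{m}_p$, hence (as you correctly identify) a logarithmic derivation $\delta=\sum_{i\geq 1}(c_i+a_i)\partial_{x_i}$ with $\delta(h)=-(c_0+a_0)h$. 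To conclude, you would need \emph{both} $c_0\neq 0$ (so the Euler factor is a unit) \emph{and} $c_1=\cdots=c_d=0$ (so $\delta$ vanishes at $p$). Nothing in the argument forces either: the redundant generator exhibited by Nakayama may perfectly well be one of the $h'_{x_j}$, giving $c_0=0$; and even if $c_0\neq 0$ you would only obtain Euler homogeneity, with an Euler field that need not vanish at $p$. The remark that the Koszul syzygies die modulo $\mathfrak{m}_p$ is true, but it does not separate a class of "Saito-type" syzygies — for a free divisor \emph{every} syzygy of $(h,h'_{x_1},\dots,h'_{x_d})$ comes from a logarithmic derivation — so it does not restrict which $c_j$ can be nonzero.

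The paper fills precisely this gap with two ingredients you have not used. First, it invokes the classical fact (Teissier, Brian\c{c}on--Skoda circle of ideas) that $h$ is integral over $(h'_{x_1},\dots,h'_{x_d})$; this produces a \emph{specific} homogeneous element $F$ of $\ker\left(\OO_{X,p}[s,\xi]\to\Rees(\calJ_p)\right)$ with $F(s,0,\dots,0)=s^m$, and linear type plus the description of the syzygies as logarithmic derivations then forces some Euler factor $\alpha_i$ to be a unit, giving Euler homogeneity. The integral-dependence input is exactly what rules out the bad case $c_0=0$ in your dichotomy, and your dimension count cannot replace it. Second, to upgrade from Euler to \emph{strong} Euler homogeneity the paper runs an induction on the ambient dimension, integrating a non-vanishing Euler field to split off a $\CC$-factor; your proposal attempts to bypass the induction by proving $h\in\mathfrak{m}_p\calJ_p$ directly, but as explained above the direct argument does not go through.
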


\begin{proof}  Let $h\in \OO_{X,p}$ be a reduced local equation of $(D,p)$ and $x=(x_1,\dots,x_d)$ a system of local coordinates centered at $p$. We recall the argument in \cite[Remark 1.26 (a)]{calde_nar_lct_ilc} for the ease of the reader. Since $h$ belongs to the integral closure of the ideal $I=(h'_{x_1},\dots,h'_{x_d})$ (cf. \cite[\S 0.5, 1]{MR0374482}), there is a homogeneous polynomial $F\in
\OO_{X,p}[s,\xi_1,\dots,\xi_d]$ of degree $m>0$ such that
$F(h,h'_{x_1},\dots,h'_{x_d})=0$ and $F(s,0,\dots,0)=s^m$. Let $$
\delta_i = \sum_{j=1}^d a_{ij} \frac{\partial }{\partial x_j},\quad
1\leq i\leq n$$ be a system of generators of $\Der(-\log D)_p$ and let
us write $\delta_i(h)=\alpha_i h$. In other words,
$(-\alpha_i,a_{i1},\dots,a_{id})$, $1\leq i\leq n$, is a system of
generators of the syzygies of $h,h'_{x_1},\dots,h'_{x_d}$. From our hypothesis, the polynomial $F$ must be a
linear combination of the polynomials
$$ -\alpha_i s + a_{i1} \xi_1 + \cdots +a_{id}\xi_d, \quad 1\leq i\leq m$$
with coefficients in $\OO_{X,p}[s,\xi]$. Putting $\xi_1=\cdots=\xi_d=0$ we deduce that at least one of the
$\alpha_i$ must be a unit, i.e. $h \in (h'_{x_1},\dots,h'_{x_d})$ and $h$ is Euler homogeneous.
\smallskip

Once we know that $h$ is Euler homogeneous, let us prove that $h$ is strongly Euler homogeneous by induction on the ambient dimension $d$. The case $d=1$ is obvious.
For $d>1$, if the Euler vector field $\chi$ ($\chi(h)=h$) does not vanish at $p$, we can integrate it and prove that $(D,X,p) \simeq (D' \times \CC,\CC^{d-1}\times \CC,(0,0))$, where $(D',0)\subset (\CC^{d-1},0)$ is a germ of a divisor. First, we deduce that $D'$ is of linear Jacobian type, and second, from the induction hypothesis, that $D'$ is strongly Euler homogeneous and so $D$ is strongly Euler homogeneous too (see Remark \ref{nota:strong-E-LJT-factor}).
\end{proof}

Let us recall that a free divisor $D$ is said to be {\it Koszul} (\cite[Definition 4.1.1]{calde_ens}) at $p\in D$ if for some (and hence any) basis $\delta_1,\dots,\delta_d$ of $\fDer_\CC(-\log D)_p$, the sequence $\sigma(\delta_1), \dots, \sigma(\delta_d)$ is regular in $\gr \DD_{X,p}$. It turns out that this property is equivalent to being holonomic in the sense of Saito \cite[Theorem 7.4]{gran-mon-nieto-schul}.
\medskip

The following definition is inspired by \cite[Definition 7.1]{gran-schul-RIMS-2010}, which only applies to the case of linear free divisors (see also Proposition 7.2 and the subsequent remark in \cite{gran-schul-RIMS-2010}.

\begin{definition} Assume that $D$ is a free divisor. We say that $D$ is {\em strongly Koszul} at $p\in D$ if for some (and hence any) basis $\delta_1,\dots,\delta_d$ of $\fDer_\CC(-\log D)_p$ and for some (and hence any) reduced equation $h\in\OO_{X,p}$ of $(D,p)$, the sequence 
$$h,\sigma(\delta_1)- \alpha_1 s, \dots, \sigma(\delta_d)- \alpha_d s,\quad \text{with }\  \delta_i(h)=\alpha_i h,$$ is regular in $\gr \DD_{X,p}[s]$ (since the sequence is formed by homogeneous elements, its regularity does not depend on the order).
\end{definition}

\begin{proposition}  \label{prop:LJT-SK}
Assume that $D$ is a free divisor and $p\in D$. The following properties are equivalent:
\begin{enumerate}
\item[(a)] $D$ is of linear Jacobian type at $p$.
\item[(b)] $D$ is strongly Koszul at $p$.
\end{enumerate}
\end{proposition}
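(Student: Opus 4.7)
Write $A = \OO_{X,p}$ and $R = \gr \DD_{X,p}[s] = A[\xi_1,\dots,\xi_d,s]$, with $\xi_j,s$ in degree one and $A$ in degree zero. With $\delta_i = \sum_j a_{ij}\partial_{x_j}$ and $\delta_i(h)=\alpha_i h$, set $L_i = \sigma(\delta_i) - \alpha_i s = \sum_j a_{ij}\xi_j - \alpha_i s$. Freeness of $D$ implies that the $d$ vectors $(-\alpha_i,a_{i1},\dots,a_{id})$ generate the syzygies of $(h,h'_{x_1},\dots,h'_{x_d})$: indeed, any syzygy $(b_0,b_1,\dots,b_d)$ yields a logarithmic derivation $\chi = \sum_j b_j \partial_{x_j}$ with $\chi(h)=-b_0 h$, which is an $A$-linear combination of $\delta_1,\dots,\delta_d$. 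Hence $\Sim_A(\calJ_{D,p}) = R/(L_1,\dots,L_d)$, and we have a canonical graded surjection
$$ \pi\colon \Sim_A(\calJ_{D,p}) \twoheadrightarrow \Rees(\calJ_{D,p}) \subset A[t], \qquad s\mapsto ht,\ \xi_j\mapsto h'_{x_j}t. $$
Property (a) is the assertion that $\pi$ is an isomorphism, and (b) (using the homogeneity remark in the definition of strongly Koszul) is the assertion that $L_1,\dots,L_d,h$ is a regular sequence in the Cohen-Macaulay ring $R$.

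The key technical input is Saito's criterion: $\det(a_{ij})$ equals $h$ up to a unit of $A$. Consequently the matrix $(a_{ij})$ is invertible over $A[h^{-1}]$, so in $R[h^{-1}]$ the equations $L_i=0$ determine the $\xi_j$ uniquely as $A[h^{-1}]$-linear multiples of $s$; the forced solution is $\xi_j \equiv (h'_{x_j}/h)\,s$, since $\sum_j a_{ij}(h'_{x_j}/h) = \alpha_i$. Therefore $\Sim_A(\calJ_{D,p})[h^{-1}]\simeq A[h^{-1}][s]$, and under $\pi$ this is identified with the polynomial ring $\Rees(\calJ_{D,p})[h^{-1}] = A[h^{-1}][ht]$. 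In particular $\ker \pi$ is annihilated by some power of $h$.

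For (a) $\Rightarrow$ (b): if $\pi$ is an isomorphism then $\Sim_A(\calJ_{D,p})\simeq\Rees(\calJ_{D,p})\subset A[t]$ is an integral domain of Krull dimension $d+1$. Since $R$ is Cohen-Macaulay of dimension $2d+1$, the ideal $(L_1,\dots,L_d)$ has codimension $d$ equal to its number of generators, so $L_1,\dots,L_d$ is a regular sequence; and $h$ is a nonzero element of the domain $\Rees(\calJ_{D,p})$, hence remains regular on the quotient. For (b) $\Rightarrow$ (a): if $L_1,\dots,L_d,h$ is regular then $h$ is a nonzero divisor on $\Sim_A(\calJ_{D,p})$, so $\ker\pi \subset \Sim_A(\calJ_{D,p})$ is $h$-torsion free; combined with the previous paragraph this forces $\ker\pi=0$, i.e.\ $\calJ_{D,p}$ is of linear type.

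The main obstacle is the Saito-criterion computation that identifies $\Sim_A(\calJ_{D,p})[h^{-1}]$ with $\Rees(\calJ_{D,p})[h^{-1}]$, since it is precisely this identification that converts the ``linear type'' question into the $h$-torsion statement that matches the strongly Koszul hypothesis. Once $\ker\pi$ is known to be $h$-torsion, the two implications amount to bookkeeping with regular sequences of homogeneous elements in the Cohen-Macaulay polynomial ring $R$, where the order of such a sequence is immaterial.
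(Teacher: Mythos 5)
Your proof is correct and takes a genuinely different route from the paper's.

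The paper proves (a) $\Rightarrow$ (b) by first invoking Proposition \ref{prop:LJT->SEH} to reduce to a basis in Euler form ($\alpha_1=\dots=\alpha_{d-1}=0$, $\alpha_d=1$), then runs a dimension count for the Rees algebra of $(h'_{x_1},\dots,h'_{x_d})$ inside $\OO[\xi]$ (without $s$), uses primeness of $\ker\varphi$ to append $h$, and finally tacks on $\sigma_d - s$ trivially. For (b) $\Rightarrow$ (a) the paper globalizes: it observes that the homogeneous components of $\calK$ and $\calK^{(1)}$ are coherent, that $\calK/\calK^{(1)}$ is supported on $\mathrm{Sing}\,D$ because every smooth point is of linear Jacobian type, and hence each $F\in\calK_p$ satisfies $h^N F\in\calK^{(1)}_p$; the regular-sequence hypothesis then forces $F\in\calK^{(1)}_p$. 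Your argument instead stays entirely local at $p$ and replaces the coherence/support step by Saito's determinant criterion $\det(a_{ij})\doteq h$: inverting $h$ makes the coefficient matrix invertible, identifies $\Sim_A(\calJ_{D,p})[h^{-1}]$ with $\Rees(\calJ_{D,p})[h^{-1}]\simeq A[h^{-1}][s]$, and shows directly that $\ker\pi$ is $h$-power torsion. Both directions then become a single $h$-torsion vs.\ $h$-torsion-free dichotomy together with a Cohen--Macaulay codimension count in $R=A[\xi,s]$ (localized at the graded maximal ideal, where homogeneous regular sequences can be permuted). Your route is shorter and more self-contained: it does not need Proposition \ref{prop:LJT->SEH} as an input to (a) $\Rightarrow$ (b), nor the passage to a neighborhood and the smooth-point observation; what you give up is the structural detour through strong Euler homogeneity, which the paper exploits elsewhere. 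One small point worth making explicit if you polish this: the codimension computation and the ``order of a homogeneous regular sequence is immaterial'' step both rely on the graded-local structure $(R,\mathfrak{m}_A+R_+)$, where $\dim R=\mathrm{ht}(\mathfrak{m}_A+R_+)=2d+1$ and graded Koszul homology vanishes iff its localization there does; this is standard but is the place where the argument is implicitly local.
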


\begin{proof}
Let $x_1,\dots,x_d\in \OO:=\OO_{X,p}$ be a system of local
coordinates centered at $p$, $h\in\OO$ a reduced
local equation of $D$ at $p$ and $J=\calJ_{D,p}=(h,h'_{x_1},\dots,h'_{x_d})$. 
Let $\{\delta_i = \sum_{j=1}^d a_{ij} \frac{\partial}{\partial
x_j}\}_{1\leq i \leq d}$ be a basis of $\fDer(-\log D)_p$,  
and let us write
$\delta_i(h)=\alpha_i h$ and $\sigma_i := \sigma(\delta_i)= \sum_{j=1}^d a_{ij}\xi_j \in \gr \DD_{X,p} =\OO[\xi]$.
The family
$\{(-\alpha_i,a_{i1},\dots,a_{id})\}_{1\leq i \leq d}$ is a basis of
the syzygies of $h,h'_{x_1},\dots,h'_{x_d}$.
\medskip

\noindent (a) $\Rightarrow$ (b):  From Proposition \ref{prop:LJT->SEH} we know that $h$ is Euler homogeneous, i.e. $h\in (h'_{x_1},\dots,h'_{x_d})$, and we can
take $\alpha_1=\cdots=\alpha_{d-1}=0$ and $\alpha_d=1$. In other
words, $\{(a_{i1},\dots,a_{id})\}_{1\leq i \leq d-1})$ is a basis of
the syzygies of $h'_{x_1},\dots,h'_{x_d}$.
\medskip

Let $\varphi : \OO[\xi] \xrightarrow{}
\Rees(J)=\OO[h'_{x_1}t,\dots,h'_{x_d}t]$ be the surjective map of
$\OO$-algebras defined by $\varphi(\xi_i)=h'_{x_i}t$. Since $J$ is
an ideal of linear type, the kernel of $\varphi$ is generated by the
$\sigma_i$, $1\leq
i\leq d-1$. So
$$\dim \left(
\frac{\OO[\xi]}{(\sigma_1,\dots,\sigma_{d-1})}\right)
=\dim \Rees(J) = d+1$$ and $\sigma_1,\dots,\sigma_{d-1}$ is a
regular sequence in $\OO[\xi]$.
\medskip

On the other hand, since $\ker \varphi =
(\sigma_1,\dots,\sigma_{d-1})$ is a prime ideal and $h\notin \ker
\varphi$, we deduce that $h,\sigma_1,\dots,\sigma_{d-1}$ is also
a regular sequence in $\OO[\xi]$ and, indeed $h,\sigma_1,\dots,\sigma_{d-1},\sigma_{d}-s$
is a regular sequence in $\OO[\xi,s]= \gr \DD_{X,p}[s]$ and 
$D$ is strongly Koszul at $p$.
\medskip

\noindent (b) $\Rightarrow$ (a): Assume that $X$ is a small enough open neighborhood of $p$, $\calK^{(1)}=(\sigma_1-\alpha_1 s,\dots, \sigma_d-\alpha_d s)\subset \OO_X[s,\xi_1,\dots,\xi_d]$
 and let $\calK$ be the kernel of the canonical graded surjective map
\begin{equation} \label{eq:Phi}
\Phi: \OO_X[s,\xi_1,\dots,\xi_d] \xrightarrow{} \Rees (\calJ_D),\quad s \mapsto ht,\ \xi_i \mapsto h'_{x_i}t.
\end{equation}
The homogeneous components of $\calK^{(1)}$ and $\calK$ are coherent $\OO_X$-modules. Since $D$ is of linear Jacobian type at any smooth point, we deduce that $\calK/\calK^{(1)}$ is supported by the singular locus of $D$. In particular, for any homogeneous polynomial
$F\in \calK_p$ there is an $N>0$ such that
$h^N F \in \calK^{(1)}_p$, but $h,\sigma_1- \alpha_1 s,\dots, \sigma_d- \alpha_d s$
is a regular sequence and so
$F\in \calK^{(1)}_p$.  We deduce that $\calK_p=\calK^{(1)}_p$ and $D$ is of linear Jacobian type at $p$.
\end{proof}

\begin{corollary} 
Assume that $D$ is a free divisor and $p\in D$. The following properties are equivalent:
\begin{enumerate}
\item[(a)] $D$ is strongly Koszul at $p$.
\item[(b)] $D$ is Euler homogeneous at $p$ and for any reduced Euler homogeneous equation $h\in\OO_{X,p}$ of $(D,p)$ and any basis $\delta_1,\dots,\delta_d$ of $\fDer_\CC(-\log D)_p$ with $\delta_1(h)=\cdots=\delta_{d-1}(h)=0$ and $\delta_d(h)=h$, the sequence $h,\sigma(\delta_1), \dots, \sigma(\delta_{d-1})$ is regular in $\gr \DD_{X,p}$.
\item[(c)] There is a reduced equation $h\in\OO_{X,p}$ of $(D,p)$ and a basis $\delta_1,\dots,\delta_d$ of $\fDer_\CC(-\log D)_p$ with $\delta_1(h)=\cdots=\delta_{d-1}(h)=0$ and $\delta_d(h)=h$ such that the sequence $h,\sigma(\delta_1), \dots, \sigma(\delta_{d-1})$ is regular in $\gr \DD_{X,p}$.
\end{enumerate}
\end{corollary}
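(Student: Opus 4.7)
The plan is to prove the cycle $(a)\Rightarrow(b)\Rightarrow(c)\Rightarrow(a)$. The key technical tool will be the free base change between $\OO[\xi]=\gr\DD_{X,p}$ and $\OO[\xi,s]=\gr\DD_{X,p}[s]$, together with the observation that, under the total-order grading on the latter, $s$ is homogeneous of degree $1$; consequently the element $\sigma(\delta_d)-s$ is homogeneous and the regular sequence in the definition of strong Koszulity can be reordered at will.

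For $(a)\Rightarrow(b)$, I would first invoke Proposition \ref{prop:LJT-SK} to identify strong Koszulity with the linear Jacobian type, and then Proposition \ref{prop:LJT->SEH} to conclude strong (hence, in particular, ordinary) Euler homogeneity. Fixing a reduced Euler homogeneous equation $h$ and a basis $\delta_1,\dots,\delta_d$ with $\delta_1(h)=\cdots=\delta_{d-1}(h)=0$ and $\delta_d(h)=h$ (so $\alpha_1=\cdots=\alpha_{d-1}=0$ and $\alpha_d=1$), the strong Koszul hypothesis provides the regular sequence
$$h,\ \sigma(\delta_1),\ \dots,\ \sigma(\delta_{d-1}),\ \sigma(\delta_d)-s$$
in $\OO[\xi,s]$. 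I would then move $\sigma(\delta_d)-s$ to the front and quotient by it via $s\mapsto\sigma(\delta_d)$, which identifies the quotient with $\OO[\xi]$; hence $h,\sigma(\delta_1),\dots,\sigma(\delta_{d-1})$ remains a regular sequence in $\OO[\xi]$, which is exactly (b).

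The implication $(b)\Rightarrow(c)$ is immediate: the Euler homogeneity asserted in (b) guarantees the existence of a reduced Euler homogeneous $h$ and, starting from any basis of $\fDer_\CC(-\log D)_p$, of a basis with $\delta_d(h)=h$ and $\delta_i(h)=0$ for $i<d$, by the same linear-algebra manipulation over the local ring $\OO$ that is used at the beginning of the $(a)\Rightarrow(b)$ part of the proof of Proposition \ref{prop:LJT-SK}. For $(c)\Rightarrow(a)$, starting from witnesses $h$ and $\delta_1,\dots,\delta_d$ as in (c), I would extend the regular sequence $h,\sigma(\delta_1),\dots,\sigma(\delta_{d-1})$ from $\OO[\xi]$ to the free (faithfully flat) extension $\OO[\xi,s]$, and then append $\sigma(\delta_d)-s$; the latter is automatically a non-zero-divisor on $R[s]$ for $R=\OO[\xi]/(h,\sigma(\delta_1),\dots,\sigma(\delta_{d-1}))$, because $s-\sigma(\delta_d)$ is monic of degree one in $s$. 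The ``some iff any'' clause in the definition of strong Koszulity then promotes this single witness to (a).

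I foresee no substantial obstacle. The only point requiring a little care is keeping the two gradings on $\gr\DD_{X,p}[s]$ straight, so that $\sigma(\delta_d)-s$ is genuinely homogeneous under the total-order grading and the regular sequences involved may be permuted freely; once this is in place, the argument reduces to elementary commutative algebra.
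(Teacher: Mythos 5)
Your proof is correct and takes essentially the same route as the paper, which reduces the corollary to Propositions \ref{prop:LJT-SK} and \ref{prop:LJT->SEH} and declares the remaining verification ``straightforward.'' The commutative-algebra steps you make explicit — quotienting $\gr\DD_{X,p}[s]$ by the homogeneous element $\sigma(\delta_d)-s$ for $(a)\Rightarrow(b)$, and for $(c)\Rightarrow(a)$ extending the regular sequence along the flat map $\gr\DD_{X,p}\hookrightarrow \gr\DD_{X,p}[s]$ and then appending the monic-in-$s$ element $\sigma(\delta_d)-s$ — are exactly the intended content, and in fact already occur inside the published proof of Proposition \ref{prop:LJT-SK}.
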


\begin{proof} It is a straightforward consequence of Propositions \ref{prop:LJT-SK} and \ref{prop:LJT->SEH}.
\end{proof}

Let us notice that property (c) in the above corollary appeared as condition (c') in \cite[Corollary 1.8]{torrelli_2004}.
\medskip

The following notion was introduced in \cite[page 257]{narvaez-contemp-2008} and was called ``(GK)''.

\begin{definition} Assume that $D$ is a free divisor. We say that $D$ is {\em weakly Koszul} at $p\in D$ if for some (and hence any) basis $\delta_1,\dots,\delta_d$ of $\fDer_\CC(-\log D)_p$ and some (and hence any) reduced local equation $h\in \OO_{X,p}$ of $(D,p)$, the sequence 
$$\sigma(\delta_1)- \alpha_1 s, \dots, \sigma(\delta_d)- \alpha_d s,\quad \text{with }\  \delta_i(h)=\alpha_i h,$$ is regular in $\gr \DD_{X,p}[s]$. We say that $D$ is weakly Koszul if it is so at any $p\in D$.
\end{definition}

\begin{proposition} \label{prop:SK->K->WK}
For a free divisor, the following implications hold:
\begin{enumerate}
\item[(a)] strongly Koszul $\Rightarrow$ Koszul.
\item[(b)] Koszul $\Rightarrow$ weakly Koszul.
\end{enumerate}
\end{proposition}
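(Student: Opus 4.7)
I tackle (a) and (b) separately. For (a), strongly Koszul $\Rightarrow$ Koszul, the plan is to pass to the Koszul setting by eliminating the parameter $s$. By Proposition \ref{prop:LJT-SK} the hypothesis is equivalent to linear Jacobian type, so by Proposition \ref{prop:LJT->SEH} it yields strong Euler homogeneity. I would then choose a basis $\delta_1, \dots, \delta_d$ of $\fDer_\CC(-\log D)_p$ with $\delta_i(h) = 0$ for $i < d$ and $\delta_d(h) = h$, so the strongly Koszul regular sequence reads $h, \sigma_1, \dots, \sigma_{d-1}, \sigma_d - s$ in $\gr \DD_{X,p}[s] = \OO[\xi, s]$ (writing $\OO = \OO_{X,p}$). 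Being homogeneous, the sequence can be reordered freely; placing $\sigma_d - s$ first and applying the $\OO$-algebra isomorphism $\OO[\xi, s]/(\sigma_d - s) \cong \OO[\xi]$ (given by $s \mapsto \sigma_d$) turns the remaining tail into a regular sequence $h, \sigma_1, \dots, \sigma_{d-1}$ in $\OO[\xi]$. To then insert $\sigma_d$, I invoke the surjection $\varphi : \OO[\xi] \to \Rees(\calJ_{D,p})$, $\xi_i \mapsto h'_{x_i} t$, from the proof of Proposition \ref{prop:LJT-SK}: by linear type $\ker \varphi = (\sigma_1, \dots, \sigma_{d-1})$, and since $\Rees(\calJ_{D,p}) \subset \OO[t]$ is a domain while $\varphi(\sigma_d) = \delta_d(h)\, t = h t \neq 0$, the class of $\sigma_d$ is a non-zero-divisor on $\OO[\xi]/(\sigma_1, \dots, \sigma_{d-1})$; hence $\sigma_1, \dots, \sigma_d$ is regular in $\gr \DD_{X,p}$, i.e., $D$ is Koszul at $p$.

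For (b), Koszul $\Rightarrow$ weakly Koszul, the plan is a height computation in the regular (hence Cohen-Macaulay) ring $\gr \DD_{X,p}[s] = \OO[\xi, s]$. Setting $g_i = \sigma(\delta_i) - \alpha_i s$, Krull's height theorem bounds $\operatorname{ht}(g_1, \dots, g_d) \leq d$, so it suffices to prove the reverse inequality. Since $g_i \equiv \sigma_i \pmod s$, the ideals $(g_\bullet) + (s)$ and $(\sigma_\bullet) + (s)$ coincide. The Koszul hypothesis says $\sigma_1, \dots, \sigma_d$ is regular in $\OO[\xi]$, hence remains regular in $\OO[\xi, s]$ by flat polynomial extension, and $s$ is obviously regular on the resulting quotient $(\OO[\xi]/(\sigma_\bullet))[s]$; so $\sigma_1, \dots, \sigma_d, s$ is regular in $\OO[\xi, s]$ and the common ideal has height $d + 1$. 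The subadditivity of codimension in a regular ring, $\operatorname{ht}(I + J) \leq \operatorname{ht} I + \operatorname{ht} J$, then forces $\operatorname{ht}(g_\bullet) \geq d$, hence equal to $d$, and Cohen-Macaulayness promotes this to regularity of $g_1, \dots, g_d$, which is the weakly Koszul property.

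The main obstacle is the last step of (a). The elimination via $\sigma_d - s$ recovers the regularity of $h$ together with $\sigma_1, \dots, \sigma_{d-1}$ without any extra input, but showing that $\sigma_d$ itself is a non-zero-divisor on the quotient modulo $(\sigma_1, \dots, \sigma_{d-1})$ is not formal: it rests on the Rees algebra being a domain (embedded in $\OO[t]$), a fact that genuinely depends on the linear-type property of $\calJ_{D,p}$ rather than on the mere regularity of the strongly Koszul sequence. Part (b), by contrast, is a straightforward height computation with no comparable subtlety.
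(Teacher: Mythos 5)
Your approach is sound overall, whereas the paper's own proof just points to external references (Proposition~\ref{prop:LJT-SK} together with \cite[Proposition~1.27]{calde_nar_lct_ilc} for (a); \cite[Proposition~2.2.14]{narvaez-contemp-2008} and \cite[Proposition~1.22]{calde_nar_lct_ilc} for (b)), so a self-contained argument is a genuine addition. For (a), after passing to linear Jacobian type and Euler homogeneity, you essentially re-run the proof of Proposition~\ref{prop:LJT-SK}, direction (a)$\Rightarrow$(b): that proof already establishes that $\sigma_1,\dots,\sigma_{d-1}$ is regular and that $\ker\varphi=(\sigma_1,\dots,\sigma_{d-1})$ is prime, and your last step (appending $\sigma_d$ because $\varphi(\sigma_d)=ht\neq 0$ in the Rees domain) is exactly what yields Koszulness. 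The elimination via $\sigma_d-s$ is a detour --- regularity of $\sigma_1,\dots,\sigma_{d-1}$ comes directly from the dimension count in that proof --- but it is valid, and you are right to flag that appending $\sigma_d$ genuinely needs the linear-type hypothesis, not just the regularity of the strongly Koszul sequence.

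For (b), the height computation is the right idea, but the inequality $\operatorname{ht}(I+J)\leq \operatorname{ht} I+\operatorname{ht} J$ for arbitrary ideals is \emph{false} in a regular ring that is not local: in $k[x_1,\dots,x_5,u]$ take $I=(u-1)\cap(x_1,\dots,x_5)$ and $J=(u)$, both of height $1$, yet $\operatorname{ht}(I+J)=\operatorname{ht}(x_1,\dots,x_5,u)=6$. Your ambient ring $\gr\DD_{X,p}[s]=\OO_{X,p}[\xi,s]$ is not local, so the literal claim does not apply. What saves the argument is that $(g_1,\dots,g_d)$ and $(s)$ are homogeneous for the grading with $\deg\xi_i=\deg s=1$; hence all their minimal primes are homogeneous, and since $\OO_{X,p}$ is local these lie in the unique graded maximal ideal $\mathfrak{M}=\mathfrak{m}+(\xi,s)$. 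Localizing at $\mathfrak{M}$ reduces to a regular \emph{local} ring where subadditivity (Serre's intersection inequality) holds, and the height and Cohen--Macaulay conclusions transport back. More elementarily, one can avoid Serre entirely: every minimal prime of $(g_\bullet)+(s)=(\sigma_\bullet,s)$ has height $\geq d+1$, while by Krull's principal ideal theorem applied to $\bar s$ in the catenary domain $\OO_{X,p}[\xi,s]/P$, any minimal prime of $P+(s)$ over a minimal prime $P$ of $(g_\bullet)$ has height $\leq \operatorname{ht} P+1$; this forces $\operatorname{ht} P\geq d$, hence $=d$, and Cohen--Macaulayness (again via the graded/local reduction) gives regularity. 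So your conclusion stands, but the subadditivity step needs to be justified through this graded--local passage or replaced by the direct Krull argument.
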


\begin{proof} The first implication is a consequence of Proposition \ref{prop:LJT-SK} and
\cite[Proposition 1.27]{calde_nar_lct_ilc}. The second one comes from \cite[Proposition 2.2.14]{narvaez-contemp-2008},  \cite[Proposition 1.22]{calde_nar_lct_ilc}.
\end{proof}

The example $x_1 x_2 (x_1+x_2) (x_1+x_3 x_2)=0$ is a weakly Koszul free divisor which is not Koszul \cite[Example 3.1]{narvaez-contemp-2008} and any non-quasihomogeneous plane curve is a Koszul free divisor non-strongly Koszul (Proposition 2.3.1 in \cite{narvaez-contemp-2008}).

\begin{remark} \label{nota:GK-Euler}
Let $D$ be a free divisor and $h\in \OO_{X,p}$ a reduced local equation of $(D,p)$. If there is a germ of vector field $\chi$ at $p$ such that $\chi(h)=h$ (i.e. $h$ is Euler homogeneous), then $D$ is weakly Koszul at $p$ if and only if for some (and hence any) basis $\delta_1,\dots,\delta_{d-1}$ of
germs of vector fields vanishing on $h$, the sequence 
$\sigma(\delta_1), \dots, \sigma(\delta_{d-1})$ is regular in $\gr \DD_{X,p}$.
\end{remark}

\section{Logarithmic--meromorphic comparison for Bernstein modules}  \label{sec:log-mero}

From now on we assume that $h:(\CC^d,0) \to (\CC,0)$ is a reduced local equation of a germ of a free divisor $(D,0) \subset (\CC^d,0) $. Let us write for short $\OO= \OO_{\CC^d,0}$, $\DD = \DD_{\CC^d,0}$ and $\calV=\calV_{\CC^d,0}=\OO[\fDer_\CC(-\log D)_0] \subset \DD$. We consider the {\em logarithmic Bernstein module} $\OO[s] h^s$ \cite[\S 1.6]{calde_nar_lct_ilc}, which is a $\calV[s]$-submodule of the Bernstein $\DD[s]$-module $\OO[s,h^{-1}] h^s$ \cite{bernstein-1972}. Obviously $\OO[s] h^s$ is generated by $h^s$ over $\calV[s]$ and $\ann_{\calV[s]} h^s$ is the left $\calV[s]$-ideal generated by the Lie-Rinehart algebra over $(\CC,\OO)$ (see Appendix A)
$$ \Theta_{h} := \left\{\delta - \alpha s\ |\ \delta \in \fDer(-\log D)_0, \delta(h)= \alpha h \right\} \subset \calV[s].$$

The following result generalizes \cite[Proposition 4.4]{torrelli_2004} to the non-Euler homogeneous case and completes Proposition \ref{prop:LJT-SK}.

\begin{proposition} \label{prop:improve-torrelli} With the above hypotheses, the following properties are equivalent:
\begin{enumerate}
\item[(a)] $(D,0)$ is of differential linear type and weakly Koszul.
\item[(b)] $(D,0)$ is strongly Koszul (or equivalently, of linear Jacobian type).
\end{enumerate}
\end{proposition}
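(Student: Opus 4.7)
The direction (b) $\Longrightarrow$ (a) is immediate: dropping $h$ from the regular sequence in (b) leaves a regular sequence (subsequences of regular sequences of homogeneous elements in a Noetherian graded ring remain regular), so (b) implies the weakly Koszul part of (a); and strongly Koszul is equivalent to linear Jacobian type by Proposition~\ref{prop:LJT-SK}, which gives differential linear type by Proposition~\ref{prop:LJT->DLT}.

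For (a) $\Longrightarrow$ (b) the plan is to compute $T := \operatorname{Tor}_\bullet^{\DD_{X,p}[s]}(\DD[s]/h\DD[s],\,\DD[s]h^s)$ in two ways. Using the two-term Koszul resolution of $\DD[s]/h\DD[s]$ over $\DD[s]$ --- a genuine free resolution, since $h$ is central in $\DD[s]$ and a non-zero-divisor there (the latter being $\OO_{X,p}[s]$-free by PBW) --- one obtains
$$T_1 \;=\; \ker\bigl(h\cdot:\DD[s]h^s \to \DD[s]h^s\bigr),$$
which vanishes because $h$ is a non-zero-divisor on $\DD[s]h^s$, as it is such on the larger module $\OO_{X,p}[s,h^{-1}]h^s$ in which $\DD[s]h^s$ embeds. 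On the other hand, hypothesis (a) yields a Spencer $\DD[s]$-free resolution of $\DD[s]h^s = \DD[s]/\DD[s]\Theta_h$ by the Koszul complex on $\delta_1-\alpha_1 s,\ldots,\delta_d-\alpha_d s$ (weakly Koszul guarantees exactness, and differential linear type identifies the resolved module with $\DD[s]h^s$). Computing $T$ through this resolution identifies it with the Koszul homology of $(\delta_i-\alpha_i s)_i$ acting on $\DD[s]/h\DD[s]$, and the vanishing of $T_1$ (and higher $T_j$) shows that this sequence is regular on $\DD[s]/h\DD[s]$.

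Passing to the total-order associated graded --- where $\gr^T(\DD[s]/h\DD[s]) = R/hR$ with $R := \OO_{X,p}[\xi,s] = \gr^T\DD_{X,p}[s]$ --- the total symbols $\sigma(\delta_i)-\alpha_i s$ form a regular sequence on $R/hR$. Combined with $h$ being a non-zero-divisor in the integral domain $R$, this gives the regular sequence $h, \sigma(\delta_1)-\alpha_1 s,\ldots,\sigma(\delta_d)-\alpha_d s$ in $R$, i.e.~$D$ is strongly Koszul at $p$.

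The main technical obstacle I foresee is this last filtered-to-graded transfer of regularity. It should follow from a standard strictness / spectral-sequence argument: since $h \in F_T^0\DD[s]$ is central, $\DD[s]$ is $h$-torsion-free, and the $\delta_i-\alpha_i s$ have the prescribed total-order symbols, the Koszul complex on $(\delta_i-\alpha_i s)$ applied to $\DD[s]/h\DD[s]$ is a strictly filtered complex whose associated graded is the Koszul complex on $(\sigma(\delta_i)-\alpha_i s)$ on $R/hR$, so exactness passes to the graded --- but care is needed to make this precise.
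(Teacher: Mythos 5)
Your (b) $\Rightarrow$ (a) direction is correct, and the observation that dropping $h$ from the strongly Koszul regular sequence directly yields weak Koszulness is a clean shortcut. The gap is in (a) $\Rightarrow$ (b), exactly at the step you flagged as the main obstacle, and that gap is fatal as written: exactness of a filtered complex does \emph{not} imply exactness of its associated graded --- the implication goes the other way. Your Tor computation correctly shows $H_j(\SP^\sbullet/h\SP^\sbullet)=0$ for $j\geq 1$ (with $\SP^\sbullet = \DD[s]\otimes_{\calV[s]}\SP^\sbullet_{\Theta_h,\calV[s]}$ the Spencer $\DD[s]$-free resolution of $\DD[s]h^s$), and $\gr_{F_T}(\SP^\sbullet/h\SP^\sbullet)$ is indeed the Koszul complex on the symbols $\sigma(\delta_i)-\alpha_i s$ over $R/hR$, $R=\OO[\xi,s]$; but running the long exact homology sequence associated with $0\to\gr\SP^\sbullet\xrightarrow{\ h\ }\gr\SP^\sbullet\to\gr(\SP^\sbullet/h\SP^\sbullet)\to 0$ shows that the required vanishing of the graded $H_1$ is \emph{equivalent} to $h$ being a nonzerodivisor on $R/(\sigma_1,\dots,\sigma_d)=\gr_{F_T}(\DD[s]h^s)$. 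That is not a consequence of $h$ being a nonzerodivisor on $\DD[s]h^s$ itself: a filtered injection need not have injective graded (in $k[x,y]/(xy)$ with the degree filtration, $x+y^2$ is a nonzerodivisor while its top-degree symbol $y^2$ is a zerodivisor). There is no general strictness principle available here, because what is missing is precisely the conclusion you are trying to reach.

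The paper closes exactly this hole with an input your argument does not reproduce: the theorem of Kashiwara \cite{kashiwara-1976} and Yano \cite{yano_1978} that the characteristic variety of $\DD[s]h^s$ with respect to $F_T$ is \emph{irreducible} of dimension $d+1$ with defining prime ideal $\ker\Phi$, for $\Phi$ as in (\ref{eq:Phi}). Combined with the identification $\sigma_{F_T}(\ann_{\DD[s]}h^s)=(\sigma(\delta_1)-\alpha_1 s,\dots,\sigma(\delta_d)-\alpha_d s)$ supplied by differential linear type plus weak Koszulness, and the elementary fact that $\Phi(h)\neq 0$, this guarantees that $h$ avoids the unique minimal (hence, by unmixedness in a Cohen--Macaulay ring, the unique associated) prime of the complete-intersection ideal $(\sigma_1,\dots,\sigma_d)$; a dimension count then gives the regular sequence. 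Your Tor vanishing is correct and morally parallel, but it cannot by itself substitute for the irreducibility of the characteristic variety.
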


\begin{proof}  (b) $\Rightarrow$ (a):  It is a consequence of Propositions \ref{prop:LJT-SK}, \ref{prop:LJT->DLT} and \ref{prop:SK->K->WK}.
\medskip

\noindent (a) $\Rightarrow$ (b): We follow Torrelli's argument in 3 $\Rightarrow$ 4 of \cite[Proposition 4.4]{torrelli_2004}. Let $\delta_1,\dots,\delta_d$ be a basis of $\fDer_\CC(-\log D)_0$ with $\delta_i(h)=\alpha_i h$ and let us write $K=\ann_{\DD[s]} h^s$. It is clear that $ \Theta_{h}$ is freely generated as $\OO$-module by $\delta_1-\alpha_1 s,\dots,\delta_d -\alpha_d s$. Since $(D,0)$ is of differential linear type, we have $K=\DD[s] \Theta_{h}$. 
Since $\sigma(\delta_1)-\alpha_1 s,\dots,\sigma(\delta_d) -\alpha_d s$ is a regular sequence in $\gr \DD[s]= \gr_{F_T}(\DD[s])$, we deduce that 
$\sigma_{F_T}(K)$ is the ideal of $\gr_{F_T}(\DD[s])$ generated by $\sigma(\delta_1)-\alpha_1 s,\dots,\sigma(\delta_d) -\alpha_d s$. We know that
the characteristic variety $\widetilde{W}=V(\sigma_{F_T}(K))\subset \CC \times T^* \CC^d$ of $\DD[s] h^s$ is irreducible of dimension $d+1$ (\cite[\S 5]{kashiwara-1976}, \cite[Proposition 2.3]{yano_1978}).
In fact $I(\widetilde{W}) = \ker \Phi$, where $\Phi$ has been defined in (\ref{eq:Phi}). Since $\Phi(h)\neq 0$ we deduce that $\dim V(h,\sigma(\delta_1)-\alpha_1 s,\dots,\sigma(\delta_d) -\alpha_d s) = \dim (W \cap V(h)) = d$ and so $h,\sigma(\delta_1)-\alpha_1 s,\dots,\sigma(\delta_d) -\alpha_d s$ is a regular sequence.
\end{proof}
\medskip

Let us denote by $\SP^\sbullet_{\Theta_{h},\calV[s]}= \calV[s] \otimes_{\U(\Theta_{h})} \SP^\sbullet_{\Theta_{h}}$, where the complex $\SP^\sbullet_{\Theta_{h}}$ is defined in  \ref{nume:SP}. From Proposition 1.21 in \cite{calde_nar_lct_ilc}, we know that $\SP^\sbullet_{\Theta_{h},\calV[s]}$ becomes a 
 $\calV[s]$-free resolution of $\OO[s] h^s$ with the augmentation
$ \varepsilon^0:\SP^{0}_{\Theta_{h},\calV[s]} = \calV[s]\to  \OO[s] h^s
$, $\varepsilon^0(P)=P h^s$.
\medskip

The proof of the following proposition is clear.

\begin{proposition} \label{prop:gen-LJT} Under the above hypotheses,
the following properties are equivalent:
\begin{enumerate}
\item[(a)] The canonical map
$ \DD[s] \Lotimes_{\calV[s]} \left( \OO[s]h^s \right) \xrightarrow{} \DD[s] h^s$ is an isomorphism in the derived category of left $\DD[s]$-modules.
\item[(b)] The divisor $D$ is of differential linear type at $0$ and the complex $\DD[s]\otimes_{\calV[s]} \SP^\sbullet_{\Theta_{h},\calV[s]}$ is exact in degrees $\neq 0$.
\end{enumerate}
\end{proposition}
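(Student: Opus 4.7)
My plan is to realize both sides explicitly through the Spencer resolution. Since $\SP^\sbullet_{\Theta_{h},\calV[s]}$ is a $\calV[s]$-free resolution of $\OO[s]h^s$ (with augmentation $\varepsilon^0$ recalled just above), we have a canonical quasi-isomorphism
$$ \DD[s] \Lotimes_{\calV[s]} (\OO[s] h^s) \simeq \DD[s] \otimes_{\calV[s]} \SP^\sbullet_{\Theta_{h},\calV[s]}$$
in $D^-(\DD[s])$. Using the identifications $\DD[s] \otimes_{\calV[s]} \SP^{-k}_{\Theta_{h},\calV[s]} \cong \DD[s] \otimes_\OO \wedge^k \Theta_{h}$, the tail of the right-hand complex in degrees $-1, 0$ reads
$$ \DD[s] \otimes_\OO \Theta_{h} \longrightarrow \DD[s], \qquad P \otimes (\delta - \alpha s) \longmapsto P(\delta - \alpha s),$$
so the image of the last differential is the left ideal $\DD[s]\Theta_{h}$, and $H^0$ of the complex is $\DD[s]/\DD[s]\Theta_{h}$.

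Next I would identify the canonical map of (a) with the morphism of complexes induced by $\varepsilon^0$ composed with the inclusion $\OO[s]h^s \hookrightarrow \DD[s]h^s$; this is a routine compatibility that follows from the naturality of $\Lotimes$ and the very definition of $\varepsilon^0(P)=Ph^s$. Under this identification, being an isomorphism in $D^-(\DD[s])$ is equivalent to the conjunction of: (i) exactness of $\DD[s] \otimes_{\calV[s]} \SP^\sbullet_{\Theta_{h},\calV[s]}$ in degrees $\neq 0$, and (ii) bijectivity of the induced surjection $\DD[s]/\DD[s]\Theta_{h} \twoheadrightarrow \DD[s]h^s$, $P\mapsto Ph^s$.

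Condition (i) is precisely the second clause of (b). For (ii), the kernel of the surjection equals $\ann_{\DD[s]}h^s / \DD[s]\Theta_{h}$, so (ii) amounts to $\ann_{\DD[s]}h^s = \DD[s]\Theta_{h}$. A direct computation shows that the operators of order $\leq 1$ (with respect to either filtration) in $\DD[s]$ annihilating $h^s$ are exactly the elements of $\Theta_{h}$; consequently $\DD[s]\Theta_{h}$ is the left ideal generated by the order-$1$ part of $\ann_{\DD[s]}h^s$, and (ii) holds if and only if $\ann_{\DD[s]}h^s$ is generated by order-$1$ operators, i.e.\ $D$ is of differential linear type at $0$. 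Combining (i) and (ii) yields (a) $\Leftrightarrow$ (b). I do not foresee any serious obstacle; the only slightly delicate point is the identification of the canonical map with the map induced by $\varepsilon^0$, but this is automatic once the Spencer resolution is in place.
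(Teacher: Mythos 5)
Your proposal is correct and is the natural unpacking of the statement; the paper itself offers no proof, remarking only that it is clear. One small inaccuracy is worth flagging: the claim that the operators of order $\leq 1$ annihilating $h^s$ are \emph{exactly} the elements of $\Theta_{h}$ is true for the total order filtration $F_T$, but not for the usual order filtration. In the latter case, if $P = \sum_j (a_j + \delta_j)s^j \in \DD^1[s]$ kills $h^s$ (with $a_j \in \OO$ and $\delta_j \in \fDer_\CC(\OO)$), the conditions are $a_0=0$ and $\delta_{j-1}(h) = -a_j h$ for $j\geq 1$, whence $P = \sum_j (\delta_j - \alpha_j s)s^j$ with $\alpha_j := -a_{j+1}$ and each $\delta_j - \alpha_j s \in \Theta_{h}$; the set of such $P$ is therefore the $\CC[s]$-span of $\Theta_{h}$, which properly contains $\Theta_{h}$. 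Since $\CC[s]$ is central in $\DD[s]$, the generated left ideal is nevertheless still $\DD[s]\Theta_{h}$, so your identification of (ii) with $\ann_{\DD[s]}h^s = \DD[s]\Theta_{h}$, and hence with differential linearity at $0$, and the resulting equivalence (a) $\Leftrightarrow$ (b), are all unaffected.
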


\begin{proposition} Any germ of free divisor $(D,0)\subset (\CC^d,0)$ of differential linear type and weakly Koszul at $0$ satisfies the equivalent properties of Proposition \ref{prop:gen-LJT}.
\end{proposition}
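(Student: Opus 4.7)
The plan is to identify $\DD[s]\otimes_{\calV[s]} \SP^\sbullet_{\Theta_{h},\calV[s]}$ with a Spencer--Koszul complex on $\DD[s]$ and then exploit the weak Koszul hypothesis through a filtration argument.

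Since $\Theta_h$ is free over $\OO[s]$ with basis $e_i:=\delta_i-\alpha_i s$ (for a basis $\delta_1,\dots,\delta_d$ of $\fDer_\CC(-\log D)_0$ with $\delta_i(h)=\alpha_i h$), the Spencer complex factors as $\SP^\sbullet_{\Theta_h,\calV[s]}\cong \calV[s]\otimes_{\OO[s]}\wedge^\sbullet_{\OO[s]}\Theta_h$, a complex of free $\calV[s]$-modules. Base change to $\DD[s]$ then yields
$$\DD[s]\otimes_{\calV[s]}\SP^\sbullet_{\Theta_h,\calV[s]}\;\cong\;\DD[s]\otimes_{\OO[s]}\wedge^\sbullet_{\OO[s]}\Theta_h,$$
whose differential combines the action of the $e_i$ on $\DD[s]$ with Lie bracket corrections $[e_i,e_j]\in\Theta_h$ inherited from the Lie--Rinehart structure.

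I filter this complex by $G^p_n:=F_T^{p-n}\DD[s]\otimes_{\OO[s]}\wedge^n_{\OO[s]}\Theta_h$. The multiplication part of the differential respects this shift because each $e_i$ has total order $1$. For the bracket corrections, note that $[\delta_i,\delta_j]\in\fDer_\CC(-\log D)_0$ is an $\OO$-linear combination of the $\delta_k$'s, and the condition that $[e_i,e_j]$ lie in $\Theta_h$ forces the expansion of $[e_i,e_j]$ in the basis $\{e_k\}$ to have coefficients in $\OO\subset F_T^0$; consequently the bracket terms live in $G^{p-1}_{n-1}$ and vanish in the associated graded. Thus $\gr G^\sbullet$ is the plain Koszul complex on $(\gr\DD)[s]$ for the sequence of symbols $\sigma(\delta_1)-\alpha_1 s,\dots,\sigma(\delta_d)-\alpha_d s$, which is regular by the weak Koszul hypothesis and therefore exact in nonzero degrees.

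Since the filtration $G$ is exhaustive and, in each Koszul degree, bounded below, a standard descending-induction argument on the filtration level transports exactness from the associated graded back to $\DD[s]\otimes_{\calV[s]}\SP^\sbullet_{\Theta_h,\calV[s]}$ itself. Combined with differential linear type---which gives $\ann_{\DD[s]}h^s=\DD[s]\Theta_h$ and thereby identifies $H^0$ with $\DD[s]/\DD[s]\Theta_h=\DD[s]h^s$---this establishes condition (b) of Proposition \ref{prop:gen-LJT}, hence (a). The main obstacle is the filtration bookkeeping in the middle step: one must choose the correct Koszul-degree shift so that $d$ becomes filtration-preserving, verify that the bracket corrections drop to strictly smaller filtration level (this is the reason $[e_i,e_j]$ needs to expand with $\OO$-coefficients in the $e_k$-basis), and ensure convergence of the associated spectral sequence; once this is set up, the argument reduces to the classical passage from exactness of a symbol complex to exactness of the operator complex.
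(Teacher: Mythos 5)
Your proof is correct and follows essentially the same route as the paper's: filter the complex $\DD[s]\otimes_{\calV[s]}\SP^\sbullet_{\Theta_h,\calV[s]}$ so that the associated graded becomes the Koszul complex of the symbols $\sigma(\delta_1)-\alpha_1 s,\dots,\sigma(\delta_d)-\alpha_d s$ in $\gr\DD[s]$, then invoke regularity of this sequence (the weakly Koszul hypothesis) to get exactness in nonzero degrees, with the differential-linear-type hypothesis identifying $H^0$. The paper simply points to an existing reference (\cite[Proposition 1.18]{calde_nar_lct_ilc}) for the filtration step, whereas you spell out the filtration $G^p_n=F_T^{p-n}\DD[s]\otimes_{\OO[s]}\wedge^n\Theta_h$ and verify that the bracket corrections drop to strictly lower filtration level because the structure constants of $[e_i,e_j]$ in the $e_k$-basis lie in $\OO$ — a correct and useful clarification.
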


\begin{proof} To prove that the complex $\DD[s]\otimes_{\calV[s]} \SP^\sbullet_{\Theta_{h},\calV[s]}$ is exact in degrees $\neq 0$, we filter it in such a way that 
its graded complex is the Koszul complex associated with the sequence $\sigma(\delta_1)- \alpha_1 s, \dots, \sigma(\delta_d)- \alpha_d s$ with $\delta_1,\dots,\delta_d$ a basis of $\fDer_\CC(-\log D)_0$ and $\delta_i(h)=\alpha_i h$ (see \cite[Proposition 1.18]{calde_nar_lct_ilc}).
\end{proof}

The following corollary is a particular case of \cite[Theorem 3.1]{calde_nar_lct_ilc}.

\begin{corollary}  \label{cor:LJT-properties}
Under the above hypotheses, if $(D,0)\subset (\CC^d,0)$ is a germ of a free divisor of linear Jacobian type, then the equivalent properties of Proposition \ref{prop:gen-LJT} hold.
\end{corollary}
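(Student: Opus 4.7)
The plan is to chain together the implications already established in the paper to reduce the corollary to the preceding proposition. Concretely, I would argue that ``linear Jacobian type at $0$'' supplies exactly the two hypotheses \emph{differential linear type} and \emph{weakly Koszul} required to invoke the proposition immediately preceding this corollary, which in turn delivers the equivalent properties of Proposition \ref{prop:gen-LJT}.

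First I would note that, by Proposition \ref{prop:LJT->DLT}, a divisor of linear Jacobian type is automatically of differential linear type at $0$. This handles one of the two hypotheses needed. Second, by Proposition \ref{prop:LJT-SK}, being of linear Jacobian type at $0$ is equivalent to being strongly Koszul at $0$; combined with Proposition \ref{prop:SK->K->WK}(a) and (b), this gives that $(D,0)$ is weakly Koszul at $0$. At this point both hypotheses of the preceding proposition in Section \ref{sec:log-mero} are satisfied, so we conclude that $(D,0)$ satisfies the equivalent properties (a) and (b) of Proposition \ref{prop:gen-LJT}.

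There is essentially no obstacle here: the corollary is a formal consequence of the implications already organised in Sections 1--2. The only point worth stating carefully in the write-up is the order in which the results are invoked, since Proposition \ref{prop:improve-torrelli} provides the alternative (and equivalent) bundling ``differential linear type $+$ weakly Koszul $\Longleftrightarrow$ strongly Koszul $\Longleftrightarrow$ linear Jacobian type'', from which the corollary follows in one line once combined with the Spencer-complex exactness argument contained in the proof of the preceding proposition (filtering by the Koszul complex associated with $\sigma(\delta_i) - \alpha_i s$, as in \cite[Proposition 1.18]{calde_nar_lct_ilc}).

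In summary, my proof would be a short paragraph: invoke Proposition \ref{prop:LJT->DLT} to obtain differential linear type; invoke Propositions \ref{prop:LJT-SK} and \ref{prop:SK->K->WK} to obtain weak Koszulness; then apply the preceding proposition to conclude that the complex $\DD[s] \otimes_{\calV[s]} \SP^\sbullet_{\Theta_h,\calV[s]}$ is a resolution of $\DD[s] h^s$, which is precisely condition (b) of Proposition \ref{prop:gen-LJT}. No new computation is required.
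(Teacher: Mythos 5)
Your proposal is correct and follows essentially the same route as the paper: the paper's proof simply cites Proposition \ref{prop:improve-torrelli} (whose implication (b)\,$\Rightarrow$\,(a) is itself proved via Propositions \ref{prop:LJT-SK}, \ref{prop:LJT->DLT} and \ref{prop:SK->K->WK}, exactly as you unpack) and then implicitly applies the unnamed proposition immediately preceding the corollary. You have just made the intermediate steps explicit; there is no substantive difference.
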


\begin{proof} It is clear from Proposition \ref{prop:improve-torrelli}.
\end{proof}

\begin{definition} \label{def:q-bernstein} For any polynomial $q(s)\in\CC[s]$ we define:
\smallskip

\noindent (1) 
 The {\it $q(s)$-Bernstein module} as the free $\OO[s,h^{-1}]$-module $\OO[s,h^{-1}] h^{q(s)}$ with basis $h^{q(s)}$
endowed with the left $\DD[s]$-module structure given by
$$ \delta \cdot (a h^{q(s)}) = \left( \delta(a) + q(s) \delta(h) h^{-1} a \right) h^{q(s)}$$
for any $\delta \in \Der_\CC(\OO)$.
\smallskip

\noindent (2)  The {\it logarithmic $q(s)$-Bernstein module} as the left $\calV[s]$-submodule $\OO[s] h^{q(s)}$ of the $q(s)$-Bernstein module $\OO[s,h^{-1}] h^{q(s)}$.
\end{definition}

It is clear that $\OO[s] h^{q(s)}$ is generated by $h^{q(s)}$ over $\calV[s]$ and 
$\ann_{\calV[s]} h^{q(s)} $ is the left $\calV[s]$-ideal generated by the $(\CC,\OO)$-Lie-Rinehart algebra
$$ \Theta_{h,q(s)} := \left\{\delta - \alpha q(s)\ |\ \delta \in \fDer(-\log D)_0, \delta(h)= \alpha h \right\} .$$

For any $\CC$-algebra map $\varphi:\CC[s] \to \CC[s]$ let us also call $\varphi$ its trivial extensions to $\OO[s]$, $\OO[s,h^{-1}]$, $\DD[s]$ and $\calV[s]$. For any $q(s)\in\CC[s]$ the map
$$ \overline{\varphi}: a h^{q(s)} \in \OO[s,h^{-1}] h^{q(s)} \mapsto \varphi(a) h^{\varphi(q(s))} \in \OO[s,h^{-1}] h^{\varphi(q(s))}
$$
$$ \text{(resp.\ }\ \overline{\varphi}: a h^{q(s)} \in \OO[s] h^{q(s)} \mapsto \varphi(a) h^{\varphi(q(s))} \in \OO[s] h^{\varphi(q(s))} \text{)}$$
is linear over $\varphi:\DD[s] \to \DD[s]$ (resp. over $\varphi:\calV[s]\to \calV[s]$):
$$ \overline{\varphi} \left(P(s) h^{q(s)}\right) = \varphi(P(s)) h^{\varphi(q(s))},\quad P(s) \in \DD[s].$$
In particular, $\overline{\varphi} \left( \DD[s]h^{q(s)} \right) \subset \DD[s] h^{\varphi(q(s))}$.

For any left $\DD[s]$-module $M$, let us call $\varphi^*(M) := \DD[s] \otimes_{\varphi} M$  the scalar extension associated with $\varphi : \DD[s] \to \DD[s]$. In $\varphi^*(M)$ one has $(P \varphi(Q)) \otimes m = P \otimes (Q m)$ for $m\in M$ and  $P, Q\in \DD[s]$. In a similar way we define $\varphi^*(M) := \calV[s] \otimes_{\varphi} M$ for any left $\calV[s]$-module $M$.
\medskip

Since $\varphi\left( \Theta_{h,q(s)} \right) = \Theta_{h,\varphi(q(s))}$, the map 
$$ \widetilde{\varphi}: \varphi^* \left( \OO[s] h^{q(s)} \right) = \calV[s] \otimes_{\varphi}  \left( \OO[s] h^{q(s)} \right)  \xrightarrow{} \OO[s] h^{\varphi(q(s))}
$$ 
induced by $\overline{\varphi}: \OO[s] h^{q(s)} \to \OO[s] h^{\varphi(q(s))} $ is an isomorphism of left $\calV[s]$-modules:
\begin{eqnarray*}  & \calV[s] \otimes_{\varphi}  \left( \OO[s] h^{q(s)} \right)  \simeq \calV[s] \otimes_{\varphi}  \left( \calV[s]/ \calV[s] \cdot \Theta_{h,q(s)}  \right) \simeq  & \\ & \calV[s]/ \calV[s] \cdot \varphi\left( \Theta_{h,q(s)} \right)   \simeq \calV[s]/ \calV[s] \cdot  \Theta_{h,\varphi(q(s))} \simeq \OO[s] h^{\varphi(q(s))}.&
\end{eqnarray*}

It is clear that 
 $\varphi \left( \ann_{\DD[s]} h^{q(s)} \right) \subset \ann_{\DD[s]} h^{\varphi(q(s))}$. If moreover $\varphi$ is an automorphism, this inclusion becomes an equality. This shows the following lemma.

\begin{lemma} \label{lema:varphi} If $\varphi:\CC[s] \to \CC[s]$ is an automorphism, then the map
$$ \widetilde{\varphi}:\varphi^* \left( \DD[s] h^{q(s)} \right) := \DD[s] \otimes_{\varphi} \left( \DD[s] h^{q(s)}  \right) \xrightarrow{} \DD[s] h^{\varphi(q(s))}$$ 
induced by $\overline{\varphi}: \DD[s]h^{q(s)} \to \DD[s] h^{\varphi(q(s))}$ is an isomorphism of left $\DD[s]$-modules.
\end{lemma}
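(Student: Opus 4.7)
The plan is to reduce the statement to an equality of left ideals in $\DD[s]$, exactly paralleling the $\calV[s]$-style computation carried out in the paragraph preceding the lemma. First I would write $\DD[s]h^{q(s)}\simeq \DD[s]/\ann_{\DD[s]} h^{q(s)}$ via $P\mapsto Ph^{q(s)}$, and similarly for $\varphi(q(s))$. Using the defining relation $(P\varphi(Q))\otimes m = P\otimes (Qm)$ of $\varphi^{*}$, one has the general identification
\[
\DD[s]\otimes_{\varphi}\bigl(\DD[s]/I\bigr)\;\simeq\;\DD[s]\big/\DD[s]\cdot\varphi(I),\qquad P\otimes\overline{Q}\;\longleftrightarrow\;\overline{P\varphi(Q)},
\]
valid for any left ideal $I\subset\DD[s]$. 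Applied to $I=\ann_{\DD[s]}h^{q(s)}$, this identifies $\varphi^{*}(\DD[s]h^{q(s)})$ with $\DD[s]/\DD[s]\cdot\varphi(\ann_{\DD[s]}h^{q(s)})$, and under this presentation $\widetilde{\varphi}$ becomes the tautological map
\[
\DD[s]\big/\DD[s]\cdot\varphi(\ann_{\DD[s]}h^{q(s)})\;\longrightarrow\;\DD[s]\big/\ann_{\DD[s]}h^{\varphi(q(s))}.
\]

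This map is well-defined and surjective because of the inclusion $\varphi(\ann_{\DD[s]}h^{q(s)})\subset \ann_{\DD[s]}h^{\varphi(q(s))}$ already recorded for arbitrary $\varphi$, so the whole lemma boils down to the reverse inclusion. This is the only point where the automorphism hypothesis enters, and it is where I expect the (very mild) main step to lie: applying the same inclusion with $\varphi^{-1}$ in place of $\varphi$, which is legitimate since $\varphi^{-1}$ is again a $\CC$-algebra automorphism of $\CC[s]$ extended trivially to $\DD[s]$, gives $\varphi^{-1}(\ann_{\DD[s]}h^{\varphi(q(s))})\subset \ann_{\DD[s]}h^{q(s)}$, and applying $\varphi$ yields $\ann_{\DD[s]}h^{\varphi(q(s))}\subset\varphi(\ann_{\DD[s]}h^{q(s)})$, as required. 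The $\DD[s]$-linearity of $\widetilde{\varphi}$ is automatic from the $\varphi$-linearity of $\overline{\varphi}$ and the universal property of the extension of scalars, so no further verification is needed.
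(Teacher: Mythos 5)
Your proof is correct and takes essentially the same route as the paper: the paper likewise reduces the lemma to the equality $\varphi(\ann_{\DD[s]}h^{q(s)}) = \ann_{\DD[s]}h^{\varphi(q(s))}$, obtained by combining the obvious inclusion $\varphi(\ann_{\DD[s]}h^{q(s)})\subset\ann_{\DD[s]}h^{\varphi(q(s))}$ with the same inclusion for $\varphi^{-1}$, exactly mirroring the cyclic-module computation done for $\calV[s]$ in the lines immediately preceding the lemma. You have merely spelled out the identification $\DD[s]\otimes_{\varphi}(\DD[s]/I)\simeq\DD[s]/\DD[s]\cdot\varphi(I)$ that the paper leaves implicit.
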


\begin{proposition}  \label{prop:gen-LJT-varphi}
Assume that $\varphi:\CC[s] \to \CC[s]$ is an automorphism of $\CC$-algebras. Then, the following properties are equivalent to the properties of Proposition \ref{prop:gen-LJT}:
\begin{enumerate}
\item[(a')] The canonical map
$ \DD[s] \Lotimes_{\calV[s]} \left( \OO[s]h^{\varphi(s)}  \right) \xrightarrow{} \DD[s] h^{\varphi(s)} $ is an isomorphism in the derived category of left $\DD[s]$-modules.
\item[(b')] $\ann_{\DD[s]} h^{\varphi(s)} $ is the left $\DD[s]$-ideal generated by $\Theta_{h,\varphi(s)} $ and the  complex $\DD[s]\otimes_{\calV[s]} \SP^\sbullet_{\Theta_{h,\varphi(s)},\calV[s]}$ is exact in degrees $\neq 0$, where $\SP^\sbullet_{\Theta_{h,\varphi(s)},\calV[s]}$ is defined in a completely similar way to $\SP^\sbullet_{\Theta_{h},\calV[s]}$.
\end{enumerate}
\end{proposition}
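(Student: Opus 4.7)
The plan is to leverage the fact that when $\varphi$ is an automorphism, the scalar-extension functor $\varphi^*$ is an equivalence of categories, so it translates each clause of Proposition \ref{prop:gen-LJT} into the corresponding primed version, allowing me to bypass any direct calculation with the shifted module $\OO[s] h^{\varphi(s)}$.

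First I would observe that the trivial extension of $\varphi$ to $\DD[s]$ (resp.\ $\calV[s]$) is a ring automorphism, being the identity on $\DD$ (resp.\ $\calV$) and $\varphi$ on $\CC[s]$, so the functor $\varphi^*=\DD[s]\otimes_\varphi(-)$ is exact with quasi-inverse $(\varphi^{-1})^*$. In particular it induces an equivalence of the corresponding derived categories and preserves and reflects quasi-isomorphisms; the same holds over $\calV[s]$. A routine associativity argument, together with the identification of $(\DD[s],\varphi)\otimes_{\DD[s]}\DD[s]$ as a $(\DD[s],\calV[s])$-bimodule with $\DD[s]$ whose right $\calV[s]$-action is twisted by $\varphi|_{\calV[s]}$, yields a natural isomorphism
$$ \varphi^*_{\DD[s]}\!\left( \DD[s] \Lotimes_{\calV[s]} M \right) \simeq \DD[s] \Lotimes_{\calV[s]} \varphi^*_{\calV[s]}(M) $$
for any $M$ in the derived category of left $\calV[s]$-modules.

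To establish (a) $\Leftrightarrow$ (a'), I would apply $\varphi^*_{\DD[s]}$ to the canonical morphism of (a). By the intertwining just stated, combined with Lemma \ref{lema:varphi} and its $\calV[s]$-analogue appearing in the discussion immediately before that lemma (both applied to $q(s)=s$), the result is canonically identified with the canonical morphism of (a'). Since $\varphi^*$ is an equivalence, one is an isomorphism in the derived category if and only if the other is.

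Finally, for (b) $\Leftrightarrow$ (b'), I would use that $\varphi$ carries $\ann_{\DD[s]} h^s$ bijectively onto $\ann_{\DD[s]} h^{\varphi(s)}$ and $\Theta_h$ bijectively onto $\Theta_{h,\varphi(s)}$ (directly from the definitions, since $\varphi$ fixes each $\delta_i$ and $\alpha_i$ and sends $s$ to $\varphi(s)$), so the annihilator-generation clause transfers; and that $\varphi^*_{\calV[s]} \SP^\sbullet_{\Theta_h,\calV[s]} \simeq \SP^\sbullet_{\Theta_{h,\varphi(s)},\calV[s]}$ term by term, whence by the intertwining the exactness in degrees $\neq 0$ of the two induced $\DD[s]$-complexes is equivalent. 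The only mildly technical step is the natural intertwining in the second paragraph; this is a standard fact about scalar extension along a ring automorphism, and I do not expect it to be a real obstacle.
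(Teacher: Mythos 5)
Your proof is correct and follows essentially the same route as the paper's: exactness of $\varphi^*$, the intertwining $\varphi^*\bigl(\DD[s]\Lotimes_{\calV[s]}-\bigr)\simeq \DD[s]\Lotimes_{\calV[s]}\varphi^*(-)$, Lemma \ref{lema:varphi} and its $\calV[s]$-analogue, and the identification $\varphi^*\SP^\sbullet_{\Theta_h,\calV[s]}\simeq \SP^\sbullet_{\Theta_{h,\varphi(s)},\calV[s]}$. The only cosmetic difference is that the paper closes the loop via (a')$\Leftrightarrow$(b') (using that $\SP^\sbullet_{\Theta_{h,\varphi(s)},\calV[s]}$ is a free resolution of $\OO[s]h^{\varphi(s)}$, mirroring (a)$\Leftrightarrow$(b)), whereas you transport (b)$\Leftrightarrow$(b') directly by $\varphi^*$; both are valid and rely on the same facts.
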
 

\begin{proof} Since $\varphi$ is an automorphism, the functors $\varphi^*$ are exact. On the other hand we obviously have
$\varphi^* \left( \DD[s] \otimes_{\calV[s]} - \right) \simeq \DD[s] \otimes_{\calV[s]} \varphi^*(-)$ and so
$$ \varphi^* \left( \DD[s] \Lotimes_{\calV[s]} \OO[s]h^s \right) \simeq \DD[s] \Lotimes_{\calV[s]} \varphi^*(\OO[s]h^s) \simeq 
\DD[s] \Lotimes_{\calV[s]} \OO[s]h^{\varphi(s)}.$$
The equivalence between (a') and property (a) in Proposition \ref{prop:gen-LJT} comes from Lemma \ref{lema:varphi}.
The equivalence between (a') and (b') comes from the fact that $\SP^\sbullet_{\Theta_{h,\varphi(s)},\calV[s]}$ is a free resolution of the left $\calV[s]$-module $\OO[s]h^{\varphi(s)}$. Let us also notice that $\SP^\sbullet_{\Theta_{h,\varphi(s)},\calV[s]} \simeq \varphi^*\left( \SP^\sbullet_{\Theta_{h},\calV[s]}\right)$.
\end{proof}

\section{Duality} \label{sec:dual}

In this section we first state the duality formula associated with the ring extension $\calV[s] \subset \DD[s]$, which is a particular case of the duality formula in 
\cite[th. (4.5)]{calde_nar_ferrara}. We refer to Appendix A for the details. As an application, we compute the $\DD[s]$-dual of $\DD[s] h^{\varphi(s)}$, for $\varphi$ and automorphism of the $\CC$-algebra $\CC[s]$, under the assumption that the equivalent properties of Proposition \ref{prop:gen-LJT} hold.
\medskip

We keep the notations of \S \ref{sec:log-mero}. The free $\OO$-modules $L=\fDer_\CC(-\log D)_0$ and $L'=\fDer_\CC(\OO_X)_0 = \Der_\CC(\OO)$ are $(\CC,\OO)$-Lie-Rinehart algebras whose enveloping (or universal) algebras are respectively $\calV$ and $\DD$ (see Example \ref{ejem:app}). 
By the scalar extension $\CC \to \CC[s] $ we obtain the $(\CC[s],\OO[s])$-Lie-Rinehart algebras  $\calL:= L[s]$ and $\calL':= L'[s]=\Der_{\CC[s]}(\OO[s])$, and their enveloping algebras are respectively $\calV[s]$ and $\DD[s]$. Here, the filtrations in $\calV[s]$ and $\DD[s]$ are induced by the usual order of differential operators and, in both cases, the $0$-step of the filtration is $\OO[s]$. If we consider the ``total order filtration'' in both rings (the total order of $s$ is $1$), then they appear as the enveloping algebras of the $(\CC,\OO)$-Lie-Rinehart algebras $F^1\calV = \OO \oplus \fDer_\CC(-\log D)_0$ and $F^1\DD = \OO \oplus \Der_\CC(\OO)$ respectively.
\medskip

The rings  $\calV[s]$ and $\DD[s]$ are left and right Noetherian of finite global homological dimension (see Proposition \ref{prop:finite-ghd})
\medskip

The dualizing modules of $L'$ and $L$ (Definition \ref{def:dualizing}) are $\omega_{L'} = \bigwedge^d \Omega_{\CC^d,0}^1 = \Omega^d_{\CC^d,0}$ and $\omega_L = \bigwedge^d \Omega_{\CC^d,0}^1(-\log D)= \Omega_{\CC^d,0}^d(-\log D)$. By scalar extension, the dualizing modules of $\calL$ and $\calL'$ are
$\omega_{\calL} = \omega_L[s]$ and 
$\omega_{\calL'} = \omega_{L'}[s]$.
\medskip

If $\calM$ is a left $\DD[s]$-module (resp. a left $\calV[s]$-modules) free of finite rank over $\OO[s]$,
the left $\DD[s]$-module (resp. left $\calV[s]$-module) $\Hom_{\OO[s]}(\calM,\OO[s])$ (see \ref{internal-oper})
will be denoted by $\calM^*$.
\medskip

Let $\cd{b}{f}{\calV[s]}$ and $\cd{b}{f}{\DD[s]}$ be respectively the bounded derived categories of left $\calV[s]$-modules and of left $\DD[s]$-modules with finitely generated cohomologies. The following definition is a particular case of Definition \ref{def:dual}.

\begin{definition} 
The duality functors $\mathbb{V}: \cd{b}{f}{\calV[s]} \to \cd{b}{f}{\calV[s]}$ and $\Dual: \cd{b}{f}{\DD[s]} \to \cd{b}{f}{\DD[s]}$ are defined by
\begin{eqnarray*}
&
\mathbb{V} (\calM) = \left(\RR \Hom_{\calV[s]}(\calM, \calV[s])[d]\right)^{\text{\rm left}}, 
& 
\Dual (\calM) = \left(\RR \Hom_{\DD[s]}(\calM, \DD[s])[d]\right)^{\text{\rm left}}.
\end{eqnarray*}
\end{definition} 

The above functors are contravariant involutive triangulated functors. 

\begin{remark} \label{nota:referee}  Since $\CC[s]$ is contained in the center of the rings $\calV[s]$ and $\DD[s]$, the abelian categories of left (or right) modules over these rings are $\CC[s]$-linear, i.e. the set of morphisms between two objects is not only an abelian group, but also a $\CC[s]$-module, and the composition of morphisms is $\CC[s]$-bilinear. This enriched structure is inherited by the triangulated categories $\cd{b}{f}{\calV[s]}$ and $\cd{b}{f}{\DD[s]}$ and functors $\mathbb{V}$ and $\Dual$ are easily seen to be $\CC[s]$-linear.
\end{remark}

The following proposition is a particular case of Proposition \ref{prop:dual-connection}.

\begin{proposition} Let $\calM$ be a left $\calV[s]$-module (resp. a left $\DD[s]$-module). If $\calM$ is free of finite rank over $\OO[s]$, we have a canonical $\calV[s]$-isomorphism (resp. $\DD[s]$-isomorphism)
$ \mathbb{V} (\calM) \simeq \calM^*$ (resp. $\Dual(\calM) \simeq \calM^*$).
\end{proposition}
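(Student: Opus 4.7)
The plan is to reduce the statement to a Koszul--de Rham computation based on the Spencer resolution of $\calM$ over the enveloping algebra. Write $U$ for either $\calV[s]$ or $\DD[s]$, and $\calL_0$ for the corresponding $(\CC[s],\OO[s])$-Lie-Rinehart algebra ($\calL$ or $\calL'$, respectively). Since $\calM$ is $\OO[s]$-free of finite rank with a compatible $U$-action, it is a flat $\calL_0$-connection, and the generalisation with coefficients of the Spencer construction treated in Appendix~A guarantees that
$$\SP^\sbullet_{\calL_0,U}(\calM):\ \cdots \to U\otimes_{\OO[s]}\textstyle\bigwedge^k \calL_0 \otimes_{\OO[s]} \calM \to \cdots \to U\otimes_{\OO[s]} \calM \to \calM \to 0$$
is a finite free resolution of $\calM$ as a left $U$-module.

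I would then apply $\Hom_U(-,U)$ to this resolution to compute $\RR\Hom_U(\calM,U)$. Using the hom-tensor adjunction between $U$-induction and $\OO[s]$-restriction, together with the $\OO[s]$-freeness of finite rank of both $\calM$ and $\textstyle\bigwedge^k \calL_0$, one identifies
$$\Hom_U\bigl(U\otimes_{\OO[s]} \textstyle\bigwedge^k \calL_0 \otimes_{\OO[s]} \calM,\,U\bigr) \ \simeq\ \textstyle\bigwedge^k \calL_0^* \otimes_{\OO[s]} \calM^* \otimes_{\OO[s]} U$$
as right $U$-modules. A direct check on the differentials shows that the resulting complex is exactly the Lie-Rinehart de Rham complex (in the sense of Appendix~A) of the induced right $U$-module $\calM^* \otimes_{\OO[s]} U$, endowed with the flat connection on $\calM^*$ dual to the one on $\calM$.

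The crux of the argument is then the following acyclicity statement: for any flat $\calL_0$-connection $\calN$ which is $\OO[s]$-free of finite rank, the de Rham complex of the induced right $U$-module $\calN \otimes_{\OO[s]} U$ is exact except in top degree $d$, where its cohomology is $\omega_{\calL_0}\otimes_{\OO[s]} \calN$. I would prove this by filtering with the order filtration on $U$; the associated graded complex is the Koszul complex on $\gr U \simeq \Sim_{\OO[s]}(\calL_0)$ attached to a basis of $\calL_0$, and that basis forms a regular sequence in this polynomial ring, so the Koszul complex is acyclic outside top degree. Combining these steps yields $\RR\Hom_U(\calM,U)\simeq (\omega_{\calL_0}\otimes_{\OO[s]}\calM^*)[-d]$ as right $U$-modules; shifting by $[d]$ and applying the side-changing functor $(-)^{\text{left}}$, which tensors over $\OO[s]$ with $\omega_{\calL_0}^{-1}$, produces the desired $U$-linear isomorphism $\mathbb{V}(\calM)\simeq \calM^*$ (resp.\ $\Dual(\calM)\simeq \calM^*$).

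The main technical point is the de Rham acyclicity step for $\calN\otimes_{\OO[s]} U$; once this is granted, the rest is functorial bookkeeping of the Spencer resolution and of the left-right conversion by $\omega_{\calL_0}$, which are both set up in Appendix~A. The only subtlety to watch is that the isomorphism is canonical and $\CC[s]$-linear, as required by Remark \ref{nota:referee}, but this is automatic because every construction involved (Spencer resolution, Hom--tensor adjunction, and the side-changing functor) is $\CC[s]$-linear.
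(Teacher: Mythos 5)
Your argument is correct and follows essentially the same path the paper takes through Appendix~A: instantiate Proposition~\ref{prop:dual-connection} (via Corollary~\ref{coro:dual-E}) with $U=\calV[s]$ or $\DD[s]$, i.e.\ take the Spencer resolution of $\calM$, use Hom--tensor adjunction to identify $\RR\Hom_U(\calM,U)$ with a de Rham-type complex, use its acyclicity outside top degree $d$, and convert right to left by tensoring with the inverse dualizing module. The only minor divergence is in how the acyclicity is settled: the paper first factors $\calM^*$ out of the complex and then quotes Huebschmann (Proposition~\ref{prop:omega}, Corollary~\ref{coro:omega}) for the quasi-isomorphism $\Omega_L^\sbullet(U)\simeq\omega_L[-d]$, whereas you keep the coefficients in place and prove the acyclicity directly by filtering by the order on $U$ and noting that the associated graded is a Koszul complex on the polynomial ring $\gr U$ for the regular sequence given by a basis of the Lie--Rinehart algebra --- which is in fact how the cited result is proved, so the two routes coincide at heart.
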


The above proposition gives for $\calM=\OO[s]$ canonical isomorphisms
$\mathbb{V} (\OO[s]) \simeq \OO[s]$, $\Dual(\OO[s]) \simeq \OO[s]$.
\medskip

Let us write $\OO(D)$ for the stalk at the origin of the integrable logarithmic connection $\OO_{\CC^d}(D)$ (see \cite[\S 1.2]{calde_nar_fourier}).

\begin{theorem} \label{teo:duality} For any complex $\calM$ in $\cd{b}{f}{\calV[s]}$ we have a canonical isomorphism in $\cd{b}{f}{\DD[s]}$
$$ \Dual\left( \DD[s]\Lotimes_{\calV[s]}  \calM \right) \simeq \DD[s]\Lotimes_{\calV[s]} \left(  \OO(D)[s]  \stackrel{\vphantom{L}}{\otimes}_{\OO[s]}    \mathbb{V}(\calM)\right).
$$
\end{theorem}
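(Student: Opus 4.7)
The theorem is a specialization to the pair $(\calV[s], \DD[s])$ of the general duality formula for an extension of Lie-Rinehart enveloping algebras \cite{calde_nar_ferrara}, whose full proof is deferred to Appendix A (see Theorem~\ref{teoapp:main-1} and Corollary~\ref{coro3:main-1}). The plan is therefore to indicate how the general formalism specializes here and how the twist $\OO(D)[s]$ arises from the discrepancy between the two dualizing modules. Both sides of the asserted isomorphism are contravariant triangulated $\CC[s]$-linear functors $\cd{b}{f}{\calV[s]} \to \cd{b}{f}{\DD[s]}$, so it suffices to construct a functorial comparison map and check it is an isomorphism on a generating family, e.g.\ the shifts of $\calV[s]$ itself, from which the general case follows by dévissage.

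The starting point is the tensor--Hom adjunction
\begin{equation*}
\Dual\bigl(\DD[s] \Lotimes_{\calV[s]} \calM\bigr) = \bigl(\RR\Hom_{\calV[s]}(\calM, \DD[s])[d]\bigr)^{\mathrm{left}},
\end{equation*}
which reduces the question to identifying $\DD[s]$ in the derived category of $(\calV[s], \DD[s])$-bimodules. The Lie-Rinehart associativity law of Appendix A will yield a canonical isomorphism of $(\calV[s], \DD[s])$-bimodules
\begin{equation*}
\RR\Hom_{\calV[s]}(\DD[s], \calV[s])[d] \simeq \DD[s] \otimes_{\OO[s]} \bigl(\omega_\calL \otimes_{\OO[s]} \omega_{\calL'}^{-1}\bigr),
\end{equation*}
where the rightmost factor encodes the precise twist between the dualities $\mathbb{V}$ and $\Dual$. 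Tensoring with $\calM$ over $\calV[s]$ and converting the result back to a left module via $\omega_\calL$ produces the candidate natural transformation; the check on $\calM = \calV[s]$ boils down to the involutivity of $\mathbb{V}$ (cf.\ Proposition~\ref{prop:dual-connection}) and the same bimodule identification.

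It remains to compute the twist explicitly. With $\omega_\calL = \Omega^d_{\CC^d,0}(-\log D)[s]$ and $\omega_{\calL'} = \Omega^d_{\CC^d,0}[s]$, the standard $\OO$-linear identification $\Omega^d(-\log D) = \OO(D)\otimes_\OO \Omega^d$ (top logarithmic forms coincide with top meromorphic forms having at most a simple pole along $D$) gives
\begin{equation*}
\omega_\calL \otimes_{\OO[s]} \omega_{\calL'}^{-1} \simeq \OO(D)[s],
\end{equation*}
which is the origin of the factor $\OO(D)[s]$ in the statement. Substituting this into the bimodule isomorphism above and commuting the $\OO(D)[s]$ past $\DD[s]$ via the logarithmic connection structure of $\OO(D)$ produces the right-hand side of the theorem.

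The main obstacle is the careful bookkeeping of left vs.\ right module structures: $\RR\Hom_{\calV[s]}(-,-)$ naturally yields right $\calV[s]$-modules, the side-switches on the $\calV[s]$- and $\DD[s]$-sides use \emph{different} dualizing modules ($\omega_\calL$ versus $\omega_{\calL'}$), and the two conversions must be compatibly reconciled. This compatibility is precisely the content of the Lie-Rinehart associativity law; that is the technical heart of the argument and the reason Appendix~A is needed. Once the associativity framework is in place, the specialization to the inclusion $\calV[s] \subset \DD[s]$ is formal, and the theorem reduces to the elementary identification of $\omega_\calL \otimes \omega_{\calL'}^{-1}$ displayed above.
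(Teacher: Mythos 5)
Your core observation is exactly the paper's: Theorem \ref{teo:duality} is the specialization of Theorem \ref{teo:duality-ferrara} to the inclusion $\calL = \fDer_\CC(-\log D)_0[s] \subset \calL' = \Der_\CC(\OO)[s]$, and the twist $\OO(D)[s]$ is precisely the relative dualizing module $\omega_{\calL'/\calL} = \Hom_{\OO[s]}(\omega_{\calL'},\omega_{\calL})$, which your $\omega_{\calL}\otimes_{\OO[s]}\omega_{\calL'}^{-1}$ equals, computed via $\Omega^d(-\log D) = \OO(D)\otimes_\OO \Omega^d$ exactly as in Example \ref{ejem:app-2}. Up to this point you have reproduced the paper's short proof.

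However, your sketch of how Theorem \ref{teo:duality-ferrara} itself is established goes off the rails. The proposed bimodule isomorphism
$$\RR\Hom_{\calV[s]}(\DD[s],\calV[s])[d]\ \simeq\ \DD[s]\otimes_{\OO[s]}\bigl(\omega_{\calL}\otimes_{\OO[s]}\omega_{\calL'}^{-1}\bigr)$$
is not correct and plays no role in the argument: $\DD[s]$ is not a finitely generated (let alone perfect) $\calV[s]$-module, so $\RR\Hom_{\calV[s]}(\DD[s],\calV[s])$ is not in $\cd{b}{f}{\calV[s]}$, is not concentrated in degree $d$, and is not controlled by the dualizing-module machinery of Appendix A. The actual proof never dualizes $\DD[s]$ over $\calV[s]$. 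After the tensor--Hom adjunction $\RR\Hom_{\DD[s]}(\DD[s]\Lotimes_{\calV[s]}\calM,\DD[s])\simeq \RR\Hom_{\calV[s]}(\calM,\DD[s])$, the key step is base change in the \emph{argument} $\calM$, not in $\DD[s]$: since $\calM$ is perfect over $\calV[s]$, one has $\RR\Hom_{\calV[s]}(\calM,\DD[s])\simeq \RR\Hom_{\calV[s]}(\calM,\calV[s])\Lotimes_{\calV[s]}\DD[s]$. One then substitutes $\RR\Hom_{\calV[s]}(\calM,\calV[s])\simeq (\omega_{\calL}\otimes_{\OO[s]}\mathbb{V}(\calM))[-d]$, uses Lemma \ref{lema:aux} ($\omega_{\calL'}\otimes_{\OO[s]}\omega_{\calL'/\calL}\simeq\omega_{\calL}$) together with the derived associativity law of Corollary \ref{coro3:main-1} to move the dualizing module past the scalar extension, and finally applies $(-)^{\text{left}}$. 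This is functorial in $\calM$ from the start, so the d\'evissage to shifts of $\calV[s]$ you propose is also unnecessary, and it is precisely the associativity theorem --- not a computation of a dualizing complex for $\calV[s]\subset\DD[s]$ --- that reconciles the two side-switches.
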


\begin{proof} It is a particular case of Theorem \ref{teo:duality-ferrara}. We only need to observe that the relative dualizing module is in this case
$\omega_{\calL'/\calL} =  \omega_{L'/L}[s] = \OO(D)[s] $   (see Example \ref{ejem:app-2}).
\end{proof}

\begin{corollary} \label{cor:dual-q-bernstein} For any polynomial $q(s)\in\CC[s]$, there is a canonical isomorphism
$$ \Dual\left( \DD[s]\Lotimes_{\calV[s]}  \OO[s]h^{q(s)} \right) \simeq \DD[s]\Lotimes_{\calV[s]} \OO[s]h^{-q(s)-1} .
$$
\end{corollary}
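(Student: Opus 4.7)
The plan is to apply Theorem \ref{teo:duality} with $\calM = \OO[s]h^{q(s)}$ and then transform the right-hand side into the desired form by identifying both factors appearing inside the tensor product.

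First I would note that, by construction, the left $\calV[s]$-module $\OO[s]h^{q(s)}$ is free of rank $1$ over $\OO[s]$ with basis $h^{q(s)}$. Consequently the preceding proposition (the particular case of Proposition \ref{prop:dual-connection}) gives a canonical isomorphism
$$ \mathbb{V}\bigl(\OO[s]h^{q(s)}\bigr) \simeq \bigl(\OO[s]h^{q(s)}\bigr)^{*} = \Hom_{\OO[s]}\bigl(\OO[s]h^{q(s)},\OO[s]\bigr), $$
in particular the $\calV[s]$-dual is concentrated in degree $0$.

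Next I would identify this dual with $\OO[s]h^{-q(s)}$ as a $\calV[s]$-module. Let $(h^{q(s)})^{\vee}$ denote the $\OO[s]$-dual basis. For any $\delta\in\fDer(-\log D)_{0}$ with $\delta(h)=\alpha h$, the induced $\calV[s]$-action on the $\OO[s]$-dual is $(\delta\cdot\phi)(m) = \delta(\phi(m)) - \phi(\delta\cdot m)$. Evaluating at $h^{q(s)}$ gives $\delta\cdot(h^{q(s)})^{\vee} = -q(s)\alpha\,(h^{q(s)})^{\vee}$, which is exactly the action of $\delta$ on $h^{-q(s)}$ in the logarithmic $(-q(s))$-Bernstein module. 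Thus the $\OO[s]$-linear map $(h^{q(s)})^{\vee}\mapsto h^{-q(s)}$ is a $\calV[s]$-isomorphism
$$ \bigl(\OO[s]h^{q(s)}\bigr)^{*}\simeq \OO[s]h^{-q(s)}. $$

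Then I would compute the tensor product $\OO(D)[s]\otimes_{\OO[s]} \OO[s]h^{-q(s)}$. As a $\calV$-module, $\OO(D)$ is $\OO$-free of rank one with generator $h^{-1}$, and $\delta\cdot h^{-1} = -\alpha h^{-1}$; hence $\OO(D)\simeq \OO h^{-1}$ as $\calV$-modules and, after scalar extension, $\OO(D)[s]\simeq \OO[s]h^{-1}$ as $\calV[s]$-modules. The map sending $h^{-1}\otimes h^{-q(s)}$ to $h^{-q(s)-1}$ is then an $\OO[s]$-linear isomorphism which, comparing the actions of $\delta$ on both sides (both act by the scalar $-(q(s)+1)\alpha$), respects the $\calV[s]$-structure. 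Therefore
$$ \OO(D)[s]\otimes_{\OO[s]}\OO[s]h^{-q(s)}\simeq \OO[s]h^{-q(s)-1}. $$

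Finally, plugging these two identifications into Theorem \ref{teo:duality} applied to $\calM=\OO[s]h^{q(s)}$ yields the asserted canonical isomorphism. There is no real obstacle here; the only point requiring care is the bookkeeping for the induced $\calV[s]$-action on the $\OO[s]$-dual (and the check that everything is canonical), which is the computational heart of the argument but is completely mechanical once one writes out the Lie--Rinehart formulas.
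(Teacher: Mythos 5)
Your proof is correct and follows exactly the same route as the paper's: apply Theorem \ref{teo:duality} with $\calM = \OO[s]h^{q(s)}$, identify $\mathbb{V}(\OO[s]h^{q(s)})$ with $\OO[s]h^{-q(s)}$ via the $\OO[s]$-dual, and identify $\OO(D)[s]\otimes_{\OO[s]}\OO[s]h^{-q(s)}$ with $\OO[s]h^{-q(s)-1}$. The paper states the same two isomorphisms without spelling out the Lie--Rinehart bookkeeping, which you have carried out correctly.
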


\begin{proof} It is a consequence of the canonical isomorphisms $\mathbb{V}\left(\OO[s]h^{q(s)} \right) \simeq
\left(\OO[s]h^{q(s)} \right)^*\simeq \OO[s]h^{-q(s)}$ and  
$\OO(D)[s]  \otimes_{\OO[s]} \OO[s]h^{-q(s)} \simeq 
\OO[s]h^{-q(s)-1}$.
\end{proof}

\begin{corollary} \label{cor:main} Under the above hypotheses, assume that our germ $h:(\CC^d,0) \to (\CC,0)$ satisfies the equivalent properties of Proposition \ref{prop:gen-LJT} and let $\varphi:\CC[s]\to \CC[s]$ be an automorphism of $\CC$-algebras. Then, there is a canonical isomorphism
$$ \Dual \left( \DD[s] h^{\varphi(s)} \right) \simeq \DD[s] h^{-\varphi(s)-1}.$$ 
\end{corollary}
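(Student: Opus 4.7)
The plan is to reduce the statement to Corollary \ref{cor:dual-q-bernstein} by applying Proposition \ref{prop:gen-LJT-varphi} on both sides of the putative isomorphism. The hypothesis that $h$ satisfies the equivalent conditions of Proposition \ref{prop:gen-LJT} is exactly what activates the equivalence (a)$\Leftrightarrow$(a${}'$) of Proposition \ref{prop:gen-LJT-varphi}: for any $\CC$-algebra automorphism $\psi$ of $\CC[s]$, the canonical map
$$ \DD[s]\Lotimes_{\calV[s]} \OO[s]h^{\psi(s)} \;\xrightarrow{\ \sim\ }\; \DD[s]h^{\psi(s)} $$
is an isomorphism in the derived category of left $\DD[s]$-modules.

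I would apply this first with $\psi = \varphi$ to rewrite $\DD[s]h^{\varphi(s)}$ as the derived tensor product $\DD[s]\Lotimes_{\calV[s]} \OO[s]h^{\varphi(s)}$, take $\Dual$, and invoke Corollary \ref{cor:dual-q-bernstein} with $q(s)=\varphi(s)$ to obtain
$$ \Dual\bigl(\DD[s]h^{\varphi(s)}\bigr) \;\simeq\; \DD[s]\Lotimes_{\calV[s]} \OO[s]h^{-\varphi(s)-1}. $$
Applying the quasi-isomorphism above a second time in the reverse direction, now with $\psi(s) := -\varphi(s)-1$, then identifies the right-hand side with $\DD[s]h^{-\varphi(s)-1}$, yielding the desired canonical isomorphism.

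The only point that needs a moment's attention --- and the nearest thing to an obstacle --- is that the second invocation of Proposition \ref{prop:gen-LJT-varphi} is legitimate, i.e., that $\psi(s)=-\varphi(s)-1$ is again a $\CC$-algebra automorphism of $\CC[s]$. This is immediate once one recalls that every such automorphism is an affine map $s\mapsto as+b$ with $a\in\CC^\times$; the involution $\varphi\mapsto -\varphi-1$ visibly preserves this class, since it only changes the leading coefficient from $a$ to $-a$. Functoriality of the composite isomorphism follows from the explicit construction of $\widetilde{\varphi}$ in Lemma \ref{lema:varphi} together with the $\CC[s]$-linearity of $\Dual$ noted in Remark \ref{nota:referee}, so no additional computation is required.
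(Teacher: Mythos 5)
Your proposal is correct and matches the paper's proof exactly: the paper derives the result as an immediate consequence of Corollary \ref{cor:dual-q-bernstein} and Proposition \ref{prop:gen-LJT-varphi}, which is precisely the two-sided reduction you spell out (apply Proposition \ref{prop:gen-LJT-varphi} with $\psi=\varphi$, dualize via Corollary \ref{cor:dual-q-bernstein}, and apply Proposition \ref{prop:gen-LJT-varphi} again with $\psi(s)=-\varphi(s)-1$). Your observation that $-\varphi(s)-1$ remains a $\CC$-algebra automorphism is the only point requiring care, and you handle it correctly.
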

\begin{proof} It is a consequence of Corollary \ref{cor:dual-q-bernstein} and Proposition \ref{prop:gen-LJT-varphi}.
\end{proof}

\section{The symmetry of Bernstein-Sato polynomials}

In this section we keep the notations of \S \ref{sec:dual}.

\begin{theorem} \label{teo:main-sym} Let $h:(\CC^d,0) \to (\CC,0)$ be a non-constant reduced germ of holomorphic function such that the divisor $D=h^{-1}(0)$ is free and satisfies the equivalent properties of Proposition \ref{prop:gen-LJT}. Then its Bernstein-Sato polynomial satisfies the equality
$b(s) = \pm b(-s-2)$.
\end{theorem}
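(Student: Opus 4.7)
The plan is to follow the outline sketched in the introduction: dualize the short exact sequence $0\to \DD[s]h^{s+1}\to \DD[s]h^s\to \calQ\to 0$, where $\calQ := \DD[s]h^s/\DD[s]h^{s+1}$, identify the result with $\calQ$ up to the involution $\varphi(s)=-s-2$ of $\CC[s]$, and then compare minimal polynomials of $s$ on both sides.

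First I would apply Corollary \ref{cor:main} with $\varphi=\mathrm{id}$ and with $\varphi(s)=s+1$; both hypotheses transfer across $\CC$-algebra automorphisms via Proposition \ref{prop:gen-LJT-varphi}, yielding canonical $\DD[s]$-isomorphisms
$$ \Dual(\DD[s]h^s)\simeq \DD[s]h^{-s-1},\qquad \Dual(\DD[s]h^{s+1})\simeq \DD[s]h^{-s-2}. $$
Applying the triangulated contravariant functor $\Dual$ to the defining short exact sequence of $\calQ$ then produces a distinguished triangle
$$ \Dual(\calQ)\longrightarrow \DD[s]h^{-s-1}\longrightarrow \DD[s]h^{-s-2}\longrightarrow \Dual(\calQ)[1] $$
in $\cd{b}{f}{\DD[s]}$.

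The main obstacle is to identify the middle arrow of this triangle with the natural inclusion $\DD[s]h^{-s-1}\hookrightarrow\DD[s]h^{-s-2}$, which requires tracing the naturality of the duality formula of Theorem \ref{teo:duality} along the $\calV[s]$-linear inclusion $\OO[s]h^{s+1}\hookrightarrow\OO[s]h^s$. That inclusion sends $h^{s+1}$ to $h\cdot h^s$, so its $\OO[s]$-dual sends $(h^s)^\ast$ to $h\cdot(h^{s+1})^\ast$; under the canonical $\calV[s]$-equivariant identification $(h^{q(s)})^\ast\leftrightarrow h^{-q(s)}$ and the twist by the relative dualizing module $\omega_{\calL'/\calL}=\OO(D)[s]\simeq \OO[s]\cdot h^{-1}$, this becomes precisely the natural inclusion at the $\calV[s]$-level, and persists after $\DD[s]\Lotimes_{\calV[s]}(-)$ because the latter is concentrated in degree $0$ on the relevant modules by the standing hypothesis. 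The resulting arrow being injective, the triangle forces $\Dual(\calQ)$ to be concentrated in cohomological degree $1$, with
$$ \mathcal{H}^1\bigl(\Dual(\calQ)\bigr)\simeq \DD[s]h^{-s-2}/\DD[s]h^{-s-1}. $$

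Finally I would invoke the involution $\varphi(s)=-s-2$. By Lemma \ref{lema:varphi}, $\varphi^\ast(\DD[s]h^{-s-2})\simeq \DD[s]h^s$ and $\varphi^\ast(\DD[s]h^{-s-1})\simeq \DD[s]h^{s+1}$, so $\varphi^\ast\bigl(\mathcal{H}^1(\Dual(\calQ))\bigr)\simeq \calQ$, equivalently $\mathcal{H}^1(\Dual(\calQ))\simeq \varphi^\ast(\calQ)$. Now $b(s)$ is by definition the minimal monic polynomial of $s$ acting on $\calQ$; since $\Dual$ is $\CC[s]$-linear (Remark \ref{nota:referee}) and involutive, the same polynomial is minimal for $s$ on $\mathcal{H}^1(\Dual(\calQ))$. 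On the other hand, by construction of $\varphi^\ast$, $s$ acts on $\varphi^\ast(\calQ)$ the way $\varphi^{-1}(s)=-s-2$ acts on $\calQ$, so its minimal polynomial there is $b(-s-2)$. The two minimal polynomials must agree up to a nonzero scalar, which is $\pm 1$ by comparing leading coefficients, giving $b(s)=\pm b(-s-2)$.
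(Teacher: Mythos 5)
Your proof is correct and follows essentially the same route as the paper: dualize the short exact sequence defining $\calQ$, use Corollary \ref{cor:main} (via Proposition \ref{prop:gen-LJT-varphi}) to identify $\Dual(\DD[s]h^s)$ and $\Dual(\DD[s]h^{s+1})$, conclude that $\Dual(\calQ)$ is concentrated in degree $1$ and isomorphic to $\varphi^*(\calQ)$ for $\varphi(s)=-s-2$, and then compare minimal polynomials of the $s$-action using the $\CC[s]$-linearity of $\Dual$. Two small remarks: you spell out the naturality check that the middle arrow of the dualized triangle really is the inclusion $\DD[s]h^{-s-1}\hookrightarrow\DD[s]h^{-s-2}$ --- the paper simply asserts this, so your paragraph usefully fills that in --- and in the last step you invoke involutivity of $\Dual$ to get equality of minimal polynomials, whereas the paper only uses the one-sided divisibility $b(-s-2)\mid b(s)$ from $\CC[s]$-linearity plus the equality of degrees; both work.
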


\begin{proof}
Let us consider the exact sequence of left $\DD[s]$-modules
$$ 0 \to \DD[s] h^{s+1} \to \DD[s] h^s \to \calQ := \left(\DD[s] h^s\right) / \left(\DD[s] h^{s+1}\right) \to 0.$$
The Bernstein-Sato polynomial $b(s)$ of $h$ is by definition the minimal polynomial of the action of $s$ on $\calQ$.
By applying the duality functor $\Dual$ we obtain a triangle
$$ \Dual (\calQ ) \to \Dual \left( \DD[s] h^s \right) \to \Dual \left( \DD[s] h^{s+1} \right) \stackrel{+1}{\to}$$ and from Corollary \ref{cor:main} we deduce that the second arrow corresponds to the inclusion $\DD[s] h^{-s-1} \to \DD[s] h^{-s-2}$, $\Dual (\calQ ) $ is concentrated in degree $1$ and there is an exact sequence of left $\DD[s]$-modules
$$  0 \to \DD[s] h^{-s-1} \to \DD[s] h^{-s-2} \to \Dual^1 (\calQ ) \to 0.$$
Let us call $\varphi:\CC[s] \to \CC[s]$ the automorphism of $\CC$-algebras determined by $\varphi(s)=-s-2$. From Lemma \ref{lema:varphi} we deduce that 
$  \varphi^* \left( \calQ\right) \simeq \Dual^1 (\calQ ) 
$
and so the minimal polynomial of the action of $s$ on $\Dual^1 (\calQ ) $ is $\varphi(b(s)) = b(-s-2)$. 
\medskip

On the other hand, if we call $\mu_s: \calQ  \to \calQ $ and $\nu_s:\Dual^1 (\calQ ) \to \Dual^1 (\calQ )$ the actions of $s$, we have (see Remark \ref{nota:referee}) 
$b(\nu_s) = b(s\cdot \Id_{\Dual^1 (\calQ )} ) = b( \Dual^1(s\cdot \Id_\calQ) ) = b(\Dual^1 (\mu_s) ) = \Dual^1 (b(\mu_s))=0$.
We conclude that $b(s)$ is a multiple of $b(-s-2)$ and so $b(s) = \pm b(-s-2)$.
\end{proof} 

\begin{corollary} \label{cor:main-sym} Let $(D,0)\subset (\CC^d,0)$ be a germ of a free divisor of linear Jacobian type. Then its Bernstein-Sato polynomial satisfies  the equality
$b(s) = \pm b(-s-2)$.
\end{corollary}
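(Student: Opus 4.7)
The plan is immediate: since Theorem~\ref{teo:main-sym} already establishes the symmetry $b(s) = \pm b(-s-2)$ whenever the germ $h$ satisfies the equivalent properties (a) and (b) of Proposition~\ref{prop:gen-LJT}, the task reduces to verifying that a free divisor of linear Jacobian type meets those hypotheses. In other words, the corollary is a pure citation/implication chain, and there is no genuine obstacle to overcome beyond correctly invoking the earlier results.

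Concretely, I would argue as follows. Given that $(D,0)$ is a germ of a free divisor of linear Jacobian type, Corollary~\ref{cor:LJT-properties} (which itself packages Proposition~\ref{prop:improve-torrelli} together with the implications \ref{prop:LJT->DLT} and \ref{prop:SK->K->WK}: linear Jacobian type $\Leftrightarrow$ strongly Koszul $\Rightarrow$ differential linear type and weakly Koszul) tells us that the equivalent conditions of Proposition~\ref{prop:gen-LJT} are satisfied by any reduced local equation $h$ of $D$ at $0$. One may then apply Theorem~\ref{teo:main-sym} verbatim to conclude the symmetry $b(s) = \pm b(-s-2)$.

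If I wanted to make the proof fully self-contained at this spot, I would also briefly recall the mechanism of Theorem~\ref{teo:main-sym}: the duality identifications $\Dual(\DD[s]h^s) \simeq \DD[s]h^{-s-1}$ and $\Dual(\DD[s]h^{s+1}) \simeq \DD[s]h^{-s-2}$ provided by Corollary~\ref{cor:main} turn the defining short exact sequence $0 \to \DD[s]h^{s+1} \to \DD[s]h^s \to \calQ \to 0$ into a dual short exact sequence whose quotient is $\varphi^*(\calQ)$ for $\varphi(s)=-s-2$, and the $\CC[s]$-linearity of $\Dual$ (Remark~\ref{nota:referee}) then matches the minimal polynomials of the $s$-actions. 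But since Theorem~\ref{teo:main-sym} has just been proved, no repetition is needed and the corollary's proof can be reduced to a single invocation of Corollary~\ref{cor:LJT-properties} followed by Theorem~\ref{teo:main-sym}.
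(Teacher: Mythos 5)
Your proposal is correct and matches the paper's (implicit) proof exactly: the corollary follows by combining Corollary~\ref{cor:LJT-properties} (which confirms that a free divisor of linear Jacobian type satisfies the equivalent properties of Proposition~\ref{prop:gen-LJT}) with Theorem~\ref{teo:main-sym}. The additional recap of the duality mechanism is accurate but, as you note, unnecessary once Theorem~\ref{teo:main-sym} is in hand.
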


\begin{corollary} Under the hypotheses of Theorem \ref{teo:main-sym}
the Bernstein-Sato polynomial of $h$ has no roots less or equal than $-2$.
\end{corollary}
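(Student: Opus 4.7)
The plan is very short: combine the symmetry $b(s)=\pm b(-s-2)$ established in Theorem \ref{teo:main-sym} with Kashiwara's classical theorem stating that every root of the Bernstein-Sato polynomial of a non-constant holomorphic germ is a strictly negative rational number. Concretely, I would argue that if $s_0 \in \CC$ is a root of $b(s)$, then by the symmetry $-s_0 - 2$ is also a root, so by Kashiwara we have both $s_0 < 0$ and $-s_0 - 2 < 0$, which together force $-2 < s_0 < 0$. In particular no root can satisfy $s_0 \leq -2$.

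The only step that requires any real input beyond the previous corollary is the appeal to Kashiwara's negativity theorem, which is a well-known external fact and needs only to be cited. There is no obstacle to overcome: once Theorem \ref{teo:main-sym} is in hand, the corollary is essentially a one-line consequence, and the argument does not depend on any additional structure of $D$ beyond the hypotheses already in force. One could even state slightly more, namely that all roots lie in the open interval $(-2,0)$, but the corollary as stated is what is needed and what will be recorded.
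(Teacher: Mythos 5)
Your proof is correct and is exactly the one-line argument the paper leaves implicit (it gives no written proof for this corollary): the symmetry $b(s)=\pm b(-s-2)$ from Theorem \ref{teo:main-sym}, combined with Kashiwara's theorem that all roots of $b$ are strictly negative (rational) numbers, forces every root $s_0$ to satisfy $-2<s_0<0$. Your slightly stronger observation that the roots lie in the open interval $(-2,0)$ is also correct and is a well-known consequence in this setting.
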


\begin{remark} Let us notice that the above corollary implies that $-1$ is the only integer root of the Bernstein-Sato polynomial of $h$. It applies in particular to the case of (weakly) Koszul free divisors of differential linear type (or equivalently, to free divisors of linear Jacobian type; see Proposition \ref{prop:improve-torrelli}) and so it answers (partially) the question stated in \cite[Remark 4.7]{torrelli_2004}.
\end{remark}

\begin{corollary} \label{cor:application-LCT}
Let $D\subset X$ be a free divisor and assume that the equivalent properties of Proposition \ref{prop:gen-LJT} hold for some (and hence any) local reduced equation of $D$ at each point $p\in D$. Then, the canonical map
$\DD_X \Lotimes_{\calV_X} \OO_X(D) \to \OO_X[\star D]$
is an isomorphism and  the logarithmic comparison theorem holds.
\end{corollary}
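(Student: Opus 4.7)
The plan is to deduce the statement locally at each point $p\in D$ by reducing the hypothesis $\DD[s]\Lotimes_{\calV[s]} \OO[s]h^s \simeq \DD[s]h^s$ of Proposition \ref{prop:gen-LJT} modulo the central element $s+1\in\CC[s]$, and then to deduce the logarithmic comparison theorem from the resulting derived isomorphism.

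First, observe that $\OO[s]h^s$ is free as a $\CC[s]$-module, so $s+1$ acts injectively on it, and a direct inspection of the logarithmic $\calV[s]$-action shows that $\OO[s]h^s/(s+1)\OO[s]h^s \simeq \OO(D)$ as $\calV$-module (the class of $h^s$ corresponds to $h^{-1}$). Likewise, $\DD[s]h^s$ sits inside the $\CC[s]$-torsion-free module $\OO[s,h^{-1}]h^s$, so $s+1$ acts injectively on $\DD[s]h^s$ as well, and the substitution $s=-1$ identifies $\DD[s]h^s/(s+1)\DD[s]h^s$ with $\DD\cdot h^{-1}\subset \OO[\star D]$.

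Second, since $\DD[s]=\DD\otimes_\CC\CC[s]$ and $\calV[s]=\calV\otimes_\CC\CC[s]$ with $\CC[s]$ central, a standard base-change argument yields
$$ \bigl(\DD[s]\Lotimes_{\calV[s]} \OO[s]h^s\bigr) \Lotimes_{\CC[s]} \CC \simeq \DD \Lotimes_{\calV} \OO(D). $$
Combined with the injectivity of $s+1$ on both sides (which collapses $\Lotimes_{\CC[s]}\CC$ to the ordinary quotient), this produces a derived isomorphism $\DD \Lotimes_{\calV} \OO(D) \simeq \DD\cdot h^{-1}$ in $\cd{b}{f}{\DD}$.

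Third, invoke the corollary preceding this one: the roots of the Bernstein--Sato polynomial of $h$ lie in $(-2,0)$, so no integer strictly less than $-1$ is a root. The classical Kashiwara argument---iterating Bernstein's functional equation---then gives $\DD\cdot h^{-1} = \OO[\star D]$. Sheafifying these local identifications produces the asserted isomorphism $\DD_X \Lotimes_{\calV_X} \OO_X(D) \simeq \OO_X[\star D]$. The logarithmic comparison theorem follows by applying the de Rham functor to both sides and using the identification of $\DR(\DD_X\Lotimes_{\calV_X}\OO_X(D))$ with $\Omega^\bullet_X(\log D)$ proved in \cite{calde_nar_fourier}. The main obstacle is the bookkeeping in the base-change step: one must verify that both sides of the hypothesized derived isomorphism are $(s+1)$-regular so that the derived reduction mod $s+1$ degenerates to an ordinary quotient compatible with the $\calV$-module structure on $\OO(D)$ and with the substitution identifying $\DD[s]h^s/(s+1)\DD[s]h^s$ with $\DD\cdot h^{-1}$.
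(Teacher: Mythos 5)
Your proposal follows essentially the same route as the paper's proof: localize, reduce modulo $s+1$, exploit $(s+1)$-torsion-freeness to degenerate the derived tensor product, and invoke the root bound on the Bernstein--Sato polynomial. There is, however, a gap in your second paragraph, in the ``likewise'' step. Torsion-freeness of $\DD[s]h^s$ inside $\OO[s,h^{-1}]h^s$ does give a well-defined \emph{surjection} $\DD[s]h^s/(s+1)\DD[s]h^s \twoheadrightarrow \DD\cdot h^{-1}$ induced by $s\mapsto -1$, but it does \emph{not} give injectivity. Injectivity amounts to $\DD[s]h^s\cap(s+1)\OO[s,h^{-1}]h^s = (s+1)\DD[s]h^s$, i.e.\ to the statement that $\ann_{\DD} h^{-1}$ equals the image of $\ann_{\DD[s]}h^s$ under $s\mapsto -1$. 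Being a torsion-free submodule of a torsion-free module does not imply this: take $M=t\CC[t]\subset N=\CC[t]$; both are $t$-torsion-free, yet $M/tM\to N/tN$ is the zero map. The required identification is precisely Torrelli's result \cite[Proposition 3.1]{torrelli_2002}, which the paper cites explicitly at this point, and whose hypothesis is again that $-1$ is the smallest integer root of $b(s)$. So the root bound that you invoke only in your third step (to conclude $\DD\cdot h^{-1}=\OO[\star D]$) is already needed in your second step; once this is supplied, the rest of your argument is correct and coincides in substance with the paper's.
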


\begin{proof} The problem being local, we can assume that $p=0\in\CC^d$ and $(D,0) \subset (\CC^d,0)$ is given by a reduced equation $h\in\OO$.
Since $-1$ is the smallest integer root of the Bernstein-Sato polynomial of $h$, we deduce that the $\DD$-module $\OO[\star D]$ is generated by $h^{-1}$ and that $\ann_{\DD} h^{-1}$ is obtained from $\ann_{\DD[s]} h^s$ by specializing $s=-1$ (cf. \cite[Proposition 3.1]{torrelli_2002}), and so $\ann_{\DD} h^{-1}$ is generated by order $1$ differential operators. In other words, the canonical map
$\DD \otimes_{\calV} \OO(D) \to \OO[\star D]$ is an isomorphism of left $\DD$-modules.
\medskip

In order to prove that the complex $\DD\Lotimes_{\calV} \OO(D)$ is concentrated in degree $0$, we proceed as in \cite[Proposition 2.2.17]{narvaez-contemp-2008}.
First, since $\OO[s] h^s$ has no $(s+1)$-torsion, we have 
$$\OO(D) = \OO h^{-1} \simeq \left(\calV[s]/\calV[s] (s+1) \right) \otimes_{\calV[s]} \OO[s] h^s \simeq \left(\calV[s]/\calV[s] (s+1) \right) \Lotimes_{\calV[s]} \OO[s] h^s,$$
and second
\begin{eqnarray*}
& \DD\Lotimes_{\calV} \OO(D) \simeq \cdots \simeq \DD\Lotimes_{\calV} \left(\calV[s]/\calV[s] (s+1) \right) \Lotimes_{\calV[s]} \OO[s] h^s \simeq& \\
& \left(\DD[s]/\DD[s] (s+1) \right) \Lotimes_{\calV[s]} \OO[s] h^s  \simeq \left(\DD[s]/\DD[s] (s+1) \right) \Lotimes_{\DD[s]} \DD[s] \Lotimes_{\calV[s]} \OO[s] h^s \simeq&\\
& \left(\DD[s]/\DD[s] (s+1) \right) \Lotimes_{\DD[s]}
\DD[s] h^s,
\end{eqnarray*}
but $\DD[s] h^s \subset \OO[s,h^{-1}] h^s$ has no $(s+1)$-torsion and so the last complex is concentrated in degree $0$.
\medskip

The last statement is a consequence of \cite[Theorem 4.1]{calde_nar_fourier}.
\end{proof}

\begin{remark} Let us notice that, after Corollary \ref{cor:LJT-properties}, the above corollary applies to free divisors of linear Jacobian type, and so to locally quasi-homogeneous free divisors \cite[Theorem 5.6]{calde_nar_compo}. In particular, it answers the missing point in \cite[Remark 1.25]{calde_nar_lct_ilc} and gives a purely algebraic proof of the logarithmic comparison theorem in \cite{cas_mond_nar_96}.
\end{remark}

The following result generalizes \cite[Theorem 1.6]{gran-schul-RIMS-2010} for any Koszul free divisor, non-necessarily reductive linear, and 
\cite[Proposition 2.3.1]{narvaez-contemp-2008} for higher dimension. It also improves \cite[Corollary 1.8]{torrelli_2004}.

\begin{theorem} Let $D\subset X$ be a Koszul free divisor. The following properties are equivalent:
\begin{enumerate}
\item[(a)] $D$ is strongly Euler homogeneous.
\item[(b)] $D$ is of linear Jacobian type.
\item[(c)] $D$ is strongly Koszul.
\item[(d)] $D$ is of differential linear type.
\item[(e)] $D$ satisfies the logarithmic comparison theorem.
\end{enumerate}
\end{theorem}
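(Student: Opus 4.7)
The plan is to close the cycle by combining the already established implications with two further arguments. Proposition \ref{prop:LJT-SK} gives (b) $\Leftrightarrow$ (c) for any free divisor. The Koszul hypothesis implies weak Koszulness (Proposition \ref{prop:SK->K->WK}), hence Proposition \ref{prop:improve-torrelli} yields (b) $\Leftrightarrow$ (d). Proposition \ref{prop:LJT->SEH} gives (b) $\Rightarrow$ (a), and Corollary \ref{cor:LJT-properties} combined with Corollary \ref{cor:application-LCT} gives (b) $\Rightarrow$ (e). Thus (b), (c), (d) are equivalent, each implies (a) and (e), and it remains to prove (a) $\Rightarrow$ (b) and (e) $\Rightarrow$ (b), both under the standing Koszul assumption.

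For (a) $\Rightarrow$ (b), fix a reduced local equation $h$ at $p$ and a strong Euler field $\chi$ with $\chi(h)=h$ vanishing at $p$. The decomposition $\fDer_\CC(-\log D)_p = \OO\cdot\chi \oplus \{\delta\in\fDer_\CC(-\log D)_p:\delta(h)=0\}$ produces a basis $\delta_1,\ldots,\delta_{d-1},\chi$ with $\delta_i(h)=0$ for $i<d$; by the corollary following Proposition \ref{prop:LJT-SK}, it suffices to check that $h,\sigma(\delta_1),\ldots,\sigma(\delta_{d-1})$ is regular in $\gr\DD_{X,p}=\OO[\xi]$. Koszulness gives regularity of $\sigma(\delta_1),\ldots,\sigma(\delta_{d-1}),\sigma(\chi)$, so $B:=\OO[\xi]/(\sigma(\delta_1),\ldots,\sigma(\delta_{d-1}))$ is Cohen--Macaulay of dimension $d+1$; only the nonzerodivisor property of $h$ on $B$ needs attention. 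By Kashiwara--Yano, $\mathrm{char}(\DD[s]h^s)\subset T^*X\times\mathbb{A}^1_s$ is irreducible of dimension $d+1$, and its image in $T^*X$ under the projection along $s=\sigma(\chi)$ is an irreducible component of $V(\sigma(\delta_1),\ldots,\sigma(\delta_{d-1}))$ of dimension $d+1$. At generic smooth points of $D$ the $\delta_i$ span the tangent space to $D$, so the rank-drop locus of the matrix $(a_{ij})_{i<d}$ has codimension at least $2$ in $X$; a rank-stratified dimension count then shows no further $(d+1)$-dimensional component can arise, whence $B$ has a unique minimal prime $\mathfrak{q}$, which by Cohen--Macaulayness is its unique associated prime. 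The canonical Rees map $\Phi:B\to\Rees(\calJ_D)_p$, $\xi_j\mapsto h'_{x_j}t$, is well defined (because $\delta_i(h)=0$ kills $\sigma(\delta_i)$) and lands in a domain, so $\ker\Phi\supseteq\mathfrak{q}$; but $\Phi(h)=h\neq 0$, whence $h\notin\mathfrak{q}$ and $h$ is a nonzerodivisor on $B$, as required.

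For (e) $\Rightarrow$ (b), I would use the Koszul characterization of LCT: LCT is equivalent to $\OO_X[\star D]=\DD_X\cdot h^{-1}$ together with $\ann_{\DD_X}(h^{-1})=\DD_X\cdot\{\delta+\alpha:\delta\in\fDer_\CC(-\log D),\,\delta(h)=\alpha h\}$. Combined with Koszul regularity of $\sigma(\delta_i)-\alpha_i s$, a lift from the specialization $s=-1$ back to generic $s$ --- running the argument of Corollary \ref{cor:application-LCT} in reverse --- produces order-$1$ generators of $\ann_{\DD_X[s]}(h^s)$, giving differential linear type (d), hence (b). Alternatively, one invokes \cite[Corollary 1.8]{torrelli_2004} directly, which for Koszul free divisors identifies LCT with property (c) of the corollary following Proposition \ref{prop:LJT-SK}.

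The principal obstacle is the rank-degeneracy analysis in (a) $\Rightarrow$ (b): one must rule out extra top-dimensional components of $V(\sigma(\delta_1),\ldots,\sigma(\delta_{d-1}))$ so that $\mathrm{Ass}(B)=\{\mathfrak{q}\}$. The Kashiwara--Yano irreducibility pins down one component, and the rest of the analysis hinges on the geometric fact that the $\delta_i$, being a basis of $\{\delta\in\fDer_\CC(-\log D)_p:\delta(h)=0\}$, generically span the tangent space to $D$; this is what keeps the rank-drop locus too small (codimension $\geq 2$ in $X$) to contribute another $(d+1)$-dimensional component.
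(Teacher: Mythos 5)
Your reduction of the problem to the two implications $(a)\Rightarrow(b)$ and $(e)\Rightarrow(b)$, using Propositions \ref{prop:LJT-SK}, \ref{prop:SK->K->WK}, \ref{prop:improve-torrelli}, \ref{prop:LJT->SEH} and Corollaries \ref{cor:LJT-properties}, \ref{cor:application-LCT}, matches the paper's reduction (the paper closes the cycle via $(e)\Rightarrow(a)$ rather than $(e)\Rightarrow(b)$, but the structure is the same). The problem lies in the two implications you still have to prove, and there you have genuine gaps.

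For $(a)\Rightarrow(b)$: your identification of the characteristic variety of $\DD[s]h^s$ as one $(d+1)$-dimensional irreducible component of $\mathrm{Spec}(B)$ is correct, and you correctly observe that the whole argument turns on showing this is the \emph{only} component. But the ``rank-stratified dimension count'' you gesture at does not close this gap. Knowing the rank-drop locus of $(a_{ij})_{i<d}$ has codimension at least $2$ in $X$ is not enough: if the rank drops to $r$ on a stratum $Z$ of dimension $k$, the portion of $V(\sigma(\delta_1),\ldots,\sigma(\delta_{d-1}))$ lying over $Z$ can have dimension up to $k+(d-r)$, which equals $d+1$ whenever $k\geq r+1$. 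For $d\geq 3$ this is entirely possible over deeper strata of the singular locus, so the naive count does not rule out extra top-dimensional components. This is precisely where the paper does real work: it sets up the augmented graded Koszul complex $\mathbf{K}^\sbullet$ with augmentation $\widetilde{\Phi}:\gr\DD_X\to\Rees(\calJ_D)$, establishes exactness in negative and positive degrees from Koszulness and surjectivity, and then proves exactness in degree $0$ by a two-step argument: a local cohomology lemma reducing exactness at $0$ to exactness on $X\setminus\{0\}$, and an \emph{induction on $\dim X$} using the Koszul hypothesis to produce a non-vanishing logarithmic derivation near any $q\in D\setminus\{0\}$, integrate it, and pass to a free divisor $D'\times\CC$ with $D'$ again Koszul and strongly Euler homogeneous in one dimension lower. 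You have no induction and no local-cohomology step; the associated-primes claim ``$\mathrm{Ass}(B)=\{\mathfrak{q}\}$'' remains unsubstantiated.

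For $(e)\Rightarrow(b)$: ``running the argument of Corollary \ref{cor:application-LCT} in reverse'' is not a valid step; the forward implication (differential linear type, plus $(s+1)$-torsion-freeness, yields LCT) does not formally invert. The paper instead proves $(e)\Rightarrow(a)$ by an independent induction on dimension: from LCT and \cite[Corollary 4.3]{calde_nar_fourier} one gets order-$1$ generation of $\ann_\DD h^{-1}$, then \cite[Corollary 1.8]{torrelli_2004} gives Euler homogeneity, and the promotion to \emph{strong} Euler homogeneity requires passing to $D'\times\CC$ and the (otherwise unobvious) fact, Proposition \ref{prop:LCT-X-XxC}, that LCT is preserved under taking $D\times\CC$. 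You mention Torrelli's corollary, but without the induction and without the stability of LCT under products, the argument doesn't go through. In short: both of your remaining implications need an induction on dimension, which your sketch never sets up, and the dimension-count heuristic you rely on for $(a)\Rightarrow(b)$ is quantitatively insufficient.
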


\begin{proof} (a) $\Rightarrow$ (b)\  We adapt the proof for locally quasi-homogeneous free divisors of \cite[Theorem 5.6]{calde_nar_compo} to the case of strongly Euler homogeneous free divisors. We proceed by induction on $\dim X$. For $\dim X = 2$ the result is known (cf. \cite[Proposition 2.3.1]{narvaez-contemp-2008}). Assume that the result is true whenever the ambient manifold has dimension $d-1$ and assume now that $\dim X = d \geq 3$.

Let $p\in D$ be a point. The question being local, we can assume that $X\subset \CC^d$ is a small open neighborhood of $p=0\in D$. Let $h:X\to\CC$ be a reduced equation of $D$
and $\calJ_D=(h,h'_{x_1},\dots,h'_{x_d})\subset \OO_X$ its Jacobian. Let $\{\delta_i = \sum_{j=1}^d a_{ij} \frac{\partial}{\partial
x_j}\}_{1\leq i \leq d}$ be a basis of $\Gamma(X,\fDer_\CC(-\log D))$. Since $D$ is strongly Euler homogeneous we can take $\delta_i(h)=0$ for all $i=1,\dots,d-1$, $\delta_d(h)=h$ and $a_{d1}(0)=\cdots=a_{dd}(0)=0$. In particular $h\in (h'_{x_1},\dots,h'_{x_d})$ and $\calJ_D=(h'_{x_1},\dots,h'_{x_d})$.

The kernel of the natural map
$\fDer_\CC(\OO_X) \to \calJ_D$ sending any derivation $\delta$ to $\delta(h)$ coincides with the free $\OO_X$-submodule $\widetilde{\Theta}_h \subset \fDer_\CC(-\log D)$ generated by $\delta_1,\dots,\delta_{d-1}$ and $\fDer_\CC(-\log D) = \widetilde{\Theta}_h \oplus \OO_X \delta_d$. Let us call 
$$\widetilde{\Phi}: \Sim \fDer_\CC(\OO_X) =\gr \DD_X =\OO_X[\xi] \to \Rees (\calJ_D), \quad 
\widetilde{\Phi}(\xi_j) = h'_{x_j} t,$$
the induced graded map, which is surjective.

Let us consider the augmented Koszul complex over $\gr \DD_X =\OO_X[\xi]$ 
associated with $\widetilde{\Theta}_h \equiv \sigma\left( \widetilde{\Theta}_h\right)  \subset  \gr^1 \DD_X$
\begin{eqnarray*}
&\displaystyle \mathbf{K}^\sbullet :=
0 \xrightarrow{}
\gr \DD_X\otimes_{\OO_X}\stackrel{d-1}{\bigwedge}\widetilde{\Theta}_h
\xrightarrow{d_{-d+1}} \cdots \xrightarrow{d_{-2}}
\gr \DD_X\otimes_{\OO_X}\stackrel{1}{\bigwedge}\widetilde{\Theta}_h
\xrightarrow{d_{-1}} &\\ & \xrightarrow{d_{-1}}\gr \DD_X \xrightarrow{d_0} 
\Rees (\calJ_D) \to 0, &\\ 
& \displaystyle d_{-k}(
F\otimes(\sigma_1\wedge\cdots\wedge\sigma_k))
 =\sum_{i=1}^k
(-1)^{i-1} F\sigma_i\otimes(\sigma_1\wedge\cdots\widehat{\sigma_i}
\cdots\wedge\sigma_k)&
\end{eqnarray*}
with augmentation $d_0=\widetilde{\Phi}$. Since $D$ is Koszul, $\sigma(\delta_1),\dots,\sigma(\delta_{d-1})$ is a regular sequence and so $\mathbf{K}^\sbullet$ is exact in degrees $\leq -1$. It is also exact in degrees $\geq 1$ because $\widetilde{\Phi}$ is surjective.
\medskip

In order to prove that $D$ is of linear Jacobian type we need to prove that $\mathbf{K}^\sbullet$  is exact in degree $0$, but for that it is enough to prove the exactness at any point $q\in X - \{0\}$. This reduction is given in \cite[Proposition 5.4]{calde_nar_compo} and we recall its proof, based on a local cohomology argument.
\medskip

The complex $\mathbf{K}^\sbullet$ is graded with $\mathbf{K}^{-j}_m = \gr^{m-j} \DD_X\otimes_{\OO_X}\stackrel{j}{\bigwedge}\widetilde{\Theta}_h$, $j=0,\dots,d-1$ and $\mathbf{K}^{1}_m = \calJ_D^m t^m$, for $m\geq 0$, and the reduction can be done on each degree. For each $m\geq 0$, let us consider the short sequence of coherent sheaves
\begin{equation} \label{eq:aux}
0 \xrightarrow{} \im d^m_{-1} \xrightarrow{} \mathbf{K}^{0}_m = \gr^{m} \DD_X \xrightarrow{d^m_{0}} \mathbf{K}^{1}_m=\calJ_D^m t^m
\xrightarrow{} 0
\end{equation}
and assume it is exact on $ X - \{0\}$.
Since $H^i_0(\OO_X)=0$ for $i\neq d$ and the complex $\mathbf{K}^\sbullet$ is exact in degrees $\leq -1$, we deduce that $H^1_0(\im d^m_{-1})=0$ . On the other hand, the sheaves $\im d^m_{-1}$ and $\mathbf{K}^{0}_m $ have no sections supported by the origin because both are subsheaves of locally free $\OO_X$-modules. By using that $d^m_{0}$ is surjective and the long exact sequences associated with local cohomology, we deduce that (\ref{eq:aux}) is exact everywhere (cf. \cite[(8.14)]{loo_84}).
\medskip

Let us now prove that $\mathbf{K}^\sbullet$ is exact on $ X - \{0\}$. The exactness on $X-D$ is clear. Let $q\in D - \{0\}$ be a point. From the Koszul hypothesis we know (cf. \cite[Corollary 1.9]{calde_nar_compo}) that the zero locus of the symbols $\sigma(\delta_i) = \sum_{j=1}^d a_{ij} \xi_j$, $i=1,\dots,d$, in the cotangent bundle $T^* X$ has dimension $d$, and so the zero locus of the coefficients $a_{ij}$, $i,j=1,\dots,d$, is the origin and there is a logarithmic derivation with respect to $D$ 
non-vanishing at $q$. We can integrate it and deduce that $(X,D,p)$ is analytically isomorphic to $(\CC^{d-1} \times \CC,D'\times \CC,(0,0))$, where $(D',0)\subset (\CC^{d-1},0)$ is a germ of hypersurface. It is easy to see that $(D',0)$ is again a germ of Koszul free divisor (cf. \cite[Proposition 1.10]{calde_nar_compo}) and strongly Euler homogeneous (cf. Remark \ref{nota:strong-E-LJT-factor}). By the induction hypothesis, we deduce that $(D',0)$ is of linear Jacobian type. So $D$ is of linear Jacobian type at $q$ and $\mathbf{K}^\sbullet$  is exact in degree $0$ at $q$.
\medskip

\noindent Equivalences (b) $\Leftrightarrow$ (c), (c) $\Leftrightarrow$ (d) have been proven respectively in Proposition \ref{prop:LJT-SK} and Proposition \ref{prop:improve-torrelli}.
\medskip

\noindent (d) $\Rightarrow$ (e)\ It is a consequence of Corollary \ref{cor:application-LCT} and Corollary \ref{cor:LJT-properties}.
\medskip

\noindent (e) $\Rightarrow$ (a)\  We proceed by induction on $\dim X$. If $\dim X= 2$ we know from \cite{calde_mon_nar_cas} that $D$ is locally quasi-homogeneous and so it is strongly Euler homogeneous. Assume that the implication (e) $\Rightarrow$ (a) is true whenever the dimension of the ambient manifold is $d-1$ and assume now that $\dim X = d \geq 3$. From \cite[Corollary 4.3]{calde_nar_fourier} or \cite[Criterion 4.1]{cas_ucha_exper} we deduce that the differential operators annihilating $h^{-1}$, where $h$ is a reduced equation of our divisor $D$, are generated by order one operators, and from \cite[Corollary 1.8]{torrelli_2004} we conclude that $D$ is Euler homogeneous. It remains to prove that for any point $p\in D$ there is an Euler vector field with respect to some (and hence, for any) reduced equation of $(D,p)$, vanishing on $p$. 
Let us take a such reduced equation $h=0$ and an Euler vector field $\chi$ on a neighborhood of $p$ such that $\chi(h)=h$. If $\chi$ vanishes at $p$, we are done. If not, we can integrate $\chi$ and deduce that $(X,D,p)$ is analytically isomorphic to $(\CC^{d-1} \times \CC,D'\times \CC,(0,0))$, where $(D',0)\subset (\CC^{d-1},0)$ is a germ of hypersurface. We deduce as before that $(D',0)$ is again a germ of Koszul free divisor, and from Proposition \ref{prop:LCT-X-XxC}, $D'$ satisfies the logarithmic comparison theorem. By the induction hypothesis, $D'$ is strongly Euler homogeneous and so $D$ is also strongly Euler homogeneous.
\end{proof}

Here we include a well known result, but for which we have not found an explicit statement in the literature.

\begin{proposition} \label{prop:LCT-X-XxC} Let $D \subset X$ be a divisor. The following properties are equivalent:
\begin{enumerate}
\item[(a)] The logarithmic comparison theorem holds for $D\subset X$.
\item[(b)] The logarithmic comparison theorem holds for $D \times \CC \subset X \times \CC$.
\end{enumerate}
\end{proposition}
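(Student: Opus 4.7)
The plan is to exploit the product structure of $X'=X\times\CC$ via a K\"unneth-type argument combined with the holomorphic Poincar\'e lemma on $\CC$. Let $\pi:X'\to X$ and $p_2:X'\to\CC$ denote the two projections, and $t$ the coordinate on $\CC$. First I would verify that if $h\in\OO_{X,p}$ is a reduced local equation of $(D,p)$, then $h\circ\pi$ is a reduced local equation of $(D',(p,0))$, and $\fDer_\CC(-\log D')_{(p,0)}$ is freely generated over $\OO_{X',(p,0)}$ by the pullbacks of a basis of $\fDer_\CC(-\log D)_p$ together with $\partial/\partial t$ (this is immediate because $h$ does not depend on $t$). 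Dually, any logarithmic (resp. meromorphic) $k$-form on $X'$ along $D'$ decomposes uniquely as $\alpha+\beta\wedge dt$ with $\alpha,\beta$ pullbacks of logarithmic (resp. meromorphic) forms on $X$ along $D$. This produces natural identifications, compatible with the inclusion,
\begin{eqnarray*}
\Omega^\bullet_{X'}(\log D') & \simeq & \pi^*\Omega^\bullet_X(\log D)\otimes_{\OO_{X'}}p_2^*\Omega^\bullet_\CC,\\
\Omega^\bullet_{X'}(\star D') & \simeq & \pi^*\Omega^\bullet_X(\star D)\otimes_{\OO_{X'}}p_2^*\Omega^\bullet_\CC,
\end{eqnarray*}
as total complexes of double complexes on $X'$.

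Next I would invoke the holomorphic Poincar\'e lemma on $\CC$: the two-term complex $p_2^*\Omega^\bullet_\CC=[\OO_{X'}\xrightarrow{\partial/\partial t}\OO_{X'}dt]$ is a resolution of $\pi^{-1}\OO_X$, since its kernel consists of functions constant in $t$ and its cokernel vanishes by integration in $t$. The spectral sequence of the double complex above (computing cohomology in the $t$-direction first) then yields
$$\mathcal{H}^k(\Omega^\bullet_{X'}(\log D'))\simeq \pi^{-1}\mathcal{H}^k(\Omega^\bullet_X(\log D)),\quad \mathcal{H}^k(\Omega^\bullet_{X'}(\star D'))\simeq \pi^{-1}\mathcal{H}^k(\Omega^\bullet_X(\star D)),$$
and the morphism induced between them is $\pi^{-1}$ of the corresponding morphism on $X$. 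Since $\pi$ is surjective, the functor $\pi^{-1}$ is conservative on sheaves of abelian groups (stalks satisfy $(\pi^{-1}F)_{(x,t)}=F_x$), so the inclusion $\Omega^\bullet_{X'}(\log D')\hookrightarrow\Omega^\bullet_{X'}(\star D')$ is a quasi-isomorphism if and only if $\Omega^\bullet_X(\log D)\hookrightarrow\Omega^\bullet_X(\star D)$ is, giving the equivalence (a)$\Leftrightarrow$(b).

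The main technical point is the local structural decomposition of logarithmic vector fields and forms on $X'$, which rests entirely on the fact that $h$ is independent of the extra coordinate $t$; once this is in place, the spectral sequence argument and the conservativity of $\pi^{-1}$ complete the proof with no further surprises.
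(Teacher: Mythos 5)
Your proposal is essentially the paper's argument made explicit. The paper's proof (following Lemma 2.2 in \cite{cas_mond_nar_96}) reduces everything to the single claim that the natural inclusions $\pi^{-1}\Omega^\bullet_X(\log D)\subset\Omega^\bullet_{\widetilde X}(\log\widetilde D)$ and $\pi^{-1}\Omega^\bullet_X(\star D)\subset\Omega^\bullet_{\widetilde X}(\star\widetilde D)$ are quasi-isomorphisms, and then concludes exactly as you do by exactness and conservativity of $\pi^{-1}$; your K\"unneth decomposition plus the holomorphic Poincar\'e lemma in the $t$-direction is a direct way of establishing that claim.

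One caveat worth flagging: the proposition is stated for an arbitrary divisor $D$, but your opening sentence invokes a basis of $\fDer_\CC(-\log D)_p$, which exists only when $D$ is free. This is inessential -- what the spectral sequence actually uses is the decomposition $\omega=\alpha+\beta\wedge dt$ of a logarithmic form on $X'$, and the observation that the conditions ``$h\omega$ and $h\,d\omega$ holomorphic'' are equivalent to ``$h\alpha$, $h\,d_X\alpha$, $h\beta$, $h\,d_X\beta$ holomorphic'' because $h$ is independent of $t$ (so $h\,\partial_t\alpha=\partial_t(h\alpha)$). This gives the structure of a double complex of sheaves of relative logarithmic forms in the $X$-direction and the two-term Koszul complex in $t$, to which your Poincar\'e-lemma spectral sequence applies, with no freeness hypothesis. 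Also, identifying that first column with the $\OO_{X'}$-module $\pi^*\Omega^\bullet_X(\log D)$ (as opposed to merely ``relative log forms with coefficients in $\OO_{X'}$'') is a small coherence point; for the quasi-isomorphism you want, it is cleaner to skip it and observe directly that the kernel of $\partial/\partial t$ on the relative log $p$-forms is $\pi^{-1}\Omega^p_X(\log D)$ and the cokernel vanishes by termwise integration.
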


\begin{proof} Let us call $\widetilde{X} = X \times \CC$, $\pi: \widetilde{X} \to X$ the projection and $\widetilde{D} = D \times \CC = \pi^{-1}(D)$. We proceed as in \cite[Lemma 2.2]{cas_mond_nar_96}. There are natural inclusions 
$$ \pi^{-1} \Omega^{\bullet}_X(\log D) \subset \Omega^{\bullet}_{\widetilde{X}}(\log \widetilde{D}),\ \pi^{-1} \Omega^{\bullet}_X(\star D) \subset \Omega^{\bullet}_{\widetilde{X}}(\star \widetilde{D})$$
which are quasi-isomorphisms. So, the following properties are equivalent:
\begin{enumerate}
\item[(a)] The logarithmic comparison theorem holds for $D\subset X$, i.e the inclusion $\Omega^{\bullet}_X(\log D) \to \Omega^{\bullet}_X(\star D)$ is a quasi-isomorphism.
\item[(a')] The inclusion $\pi^{-1}\Omega^{\bullet}_X(\log D) \to \pi^{-1}\Omega^{\bullet}_X(\star D)$ is a quasi-isomorphism.
\item[(b)] The logarithmic comparison theorem holds for $D \times \CC \subset X \times \CC$, i.e. the inclusion $\Omega^{\bullet}_{\widetilde{X}}(\log \widetilde{D}) \to \Omega^{\bullet}_{\widetilde{X}}(\star \widetilde{D})$ is a quasi-isomorphism.
\end{enumerate}
\end{proof}

\section{The case of logarithmic connections}

In this section we generalize the preceding results to the case of integrable logarithmic connections. Proofs remain essentially the same and will be only sketched. This generalization is based on \cite[Theorem 3.1, Corollary 3.2]{calde_nar_lct_ilc}, and so we will assume that our germ of free divisor $(D,0) \subset (\CC^d,0)$, with reduced equation $h:(\CC^d,0) \to (\CC,0)$, is of linear Jacobian type.
We keep the notations of \S \ref{sec:log-mero} and \S \ref{sec:dual}.
\medskip

Let $\EE$ be
a germ at $0$ of integrable logarithmic connection with respect $D$, i.e. $\EE$ is a free $\OO$-module of finite rank endowed with a left $\calV$-module structure. As explained in \cite[\S\ 3.1]{calde_nar_lct_ilc}, we consider the logarithmic Bernstein-Kashiwara module $\EE[s] h^s$ inside the meromorphic connection $\EE[s,h^{-1}] h^s$. Corollary 3.2 in \cite{calde_nar_lct_ilc} tells us that the canonical map
$$ \DD_X[s]\Lotimes_{\calV_X[s]} \EE[s] h^s \to \DD_X[s]\EE[s] h^s
$$
is an isomorphism in $\cd{b}{f}{\DD[s]}$.
\medskip

More generally, for any $q(s)\in\CC[s]$ we consider the {\em $q(s)$-Bernstein module} associated with $\EE$
$$\EE[s] h^{q(s)} := \EE[s] \otimes_{\OO[s]} \OO[s]h^{q(s)}   \subset \EE[s,h^{-1}] h^{q(s)}:= \EE[s,h^{-1}] \otimes_{\OO[s]} \OO[s,h^{-1}] h^{q(s)} $$  
as in Definition \ref{def:q-bernstein}. In the same way as in Lemma \ref{lema:varphi} and Proposition \ref{prop:gen-LJT-varphi} we prove that for any automorphism of $\CC$-algebras $\varphi:\CC[s] \to \CC[s]$ the canonical map
\begin{equation} \label{eq:ilc-varphi}
\varphi^* \left( \DD[s] \EE[s]h^{q(s)} \right) := \DD[s] \otimes_{\varphi} \left( \DD[s] \EE[s]h^{s}  \right) \to \DD[s] \EE[s] h^{\varphi(q(s))}
\end{equation}
induced by $\overline{\varphi}: \OO[s,h^{-1}] h^{q(s)} \to \OO[s,h^{-1}] h^{\varphi(q(s))} $
is an isomorphism of left $\DD[s]$-modules. Also, the canonical map
$$ \DD[s] \Lotimes_{\calV[s]} \left( \EE[s] h^{\varphi(s)}  \right) \xrightarrow{} \DD[s] \EE[s] h^{\varphi(s)} $$ 
is an isomorphism in the derived category of left $\DD[s]$-modules. 
As in Corollary \ref{cor:main} we obtain a canonical isomorphism
$$ \Dual \left( \DD[s] \EE[s]h^{\varphi(s)} \right) \simeq \DD[s] \EE^*[s]h^{-\varphi(s)-1}.$$ 
Recall that the Bernstein-Sato polynomial of $\EE$ is defined as the minimal polynomial of the action of $s$ on the quotient
$\calQ_{\EE}:=\DD[s] \EE[s]h^{s}/ \DD[s] \EE[s]h^{s+1}$ and it is denoted by $b_{\EE}(s)$ \cite[Remark 3.5]{calde_nar_lct_ilc}. The existence of a non-zero $b_{\EE}(s)$ is a straightforward consequence of the existence of non-trivial Bernstein-Sato functional equations with respect to sections of holonomic $\DD$-modules 
(\cite[Theorem 2.7]{kashiwara-1978}; see also \cite{meb_nar_dmw}).
\medskip

Now we are ready to state and prove the announced extension of Theorem \ref{teo:main-sym} to the case of arbitrary logarithmic connections.

\begin{theorem} \label{teo:main-2} Let $(D,0) \subset (\CC^d,0)$ be a germ of a free divisor of linear Jacobian type with reduced equation $h:(\CC^d,0) \to (\CC,0)$ and 
$\EE$ a germ at $0$ of integrable logarithmic connection with respect to $D$. Then the Bernstein-Sato polynomials of $\EE$ and of its dual $\EE^*$ are related by the equality
$$ b_{\EE}(s) = \pm b_{\EE^*}(-s-2).$$
\end{theorem}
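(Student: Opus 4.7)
The strategy is to replicate the scheme of Theorem \ref{teo:main-sym} with the trivial connection replaced by $\EE$, keeping track of the appearance of $\EE^*$ under duality. I would start from the short exact sequence of left $\DD[s]$-modules
$$0 \to \DD[s]\EE[s]h^{s+1} \to \DD[s]\EE[s]h^s \to \calQ_{\EE} \to 0$$
and apply the duality functor $\Dual$. Using the isomorphism $\Dual(\DD[s]\EE[s]h^{\varphi(s)}) \simeq \DD[s]\EE^*[s]h^{-\varphi(s)-1}$ (recalled just before the statement) with $\varphi(s)=s$ and $\varphi(s)=s+1$, the resulting triangle is
$$\Dual(\calQ_{\EE}) \to \DD[s]\EE^*[s]h^{-s-1} \to \DD[s]\EE^*[s]h^{-s-2} \xrightarrow{+1}$$
whose middle arrow is the natural inclusion. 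Hence $\Dual(\calQ_{\EE})$ is concentrated in degree $1$ and
$$\Dual^1(\calQ_{\EE}) \simeq \DD[s]\EE^*[s]h^{-s-2}\big/\DD[s]\EE^*[s]h^{-s-1}.$$

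Next, letting $\varphi:\CC[s]\to\CC[s]$ be the involutive automorphism $\varphi(s)=-s-2$, I would invoke the isomorphism (\ref{eq:ilc-varphi}) applied to $\EE^*$ with $q(s)=s$ and $q(s)=s+1$ to identify the above quotient canonically with $\varphi^*(\calQ_{\EE^*})$. A direct computation shows that the minimal polynomial of $s$ acting on $\varphi^*(\calQ_{\EE^*})$ is $\varphi(b_{\EE^*}(s))=b_{\EE^*}(-s-2)$. Since $\Dual$ is $\CC[s]$-linear (Remark \ref{nota:referee}), the action of $s$ on $\Dual^1(\calQ_{\EE})$ is the image under $\Dual^1$ of the action of $s$ on $\calQ_{\EE}$, so $b_{\EE}(s)$ annihilates $\Dual^1(\calQ_{\EE})\simeq\varphi^*(\calQ_{\EE^*})$. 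Consequently $b_{\EE^*}(-s-2)$ divides $b_{\EE}(s)$.

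For the reverse divisibility I would run exactly the same argument with $\EE$ replaced by $\EE^*$, using that $(\EE^*)^*\simeq\EE$ as integrable logarithmic connections (both are $\OO$-free of the same finite rank with compatible $\calV$-action). This yields $b_{\EE}(-s-2)\mid b_{\EE^*}(s)$, equivalently $b_{\EE}(s)\mid b_{\EE^*}(-s-2)$ after substituting $s\mapsto -s-2$. Combining the two divisibilities, and comparing leading coefficients, produces $b_{\EE}(s)=\pm b_{\EE^*}(-s-2)$. The main obstacle is simply the bookkeeping: one must verify that each of the preparatory isomorphisms — (\ref{eq:ilc-varphi}) for $\EE^*$, the duality identification $\Dual(\DD[s]\EE[s]h^{\varphi(s)})\simeq\DD[s]\EE^*[s]h^{-\varphi(s)-1}$, and the compatibility of $\Dual$ with the $\CC[s]$-action — is $\CC[s]$-linear in the right way so that the final minimal-polynomial arithmetic is legitimate; once this is ensured, the rest of the argument is formal and parallels Theorem \ref{teo:main-sym} verbatim.
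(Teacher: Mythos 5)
Your proposal matches the paper's proof step for step: the same short exact sequence is dualized, the same identification of $\Dual^1(\calQ_{\EE})$ with $\varphi^*(\calQ_{\EE^*})$ via (\ref{eq:ilc-varphi}) is used, the $\CC[s]$-linearity of $\Dual$ yields one divisibility, and the symmetric argument with $\EE$ and $\EE^*$ interchanged (using $(\EE^*)^*\simeq\EE$) yields the other. This is essentially verbatim the argument given in the paper.
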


\begin{proof} We proceed as in the proof of Theorem \ref{teo:main-sym}. Let us consider the exact sequence of left $\DD[s]$-modules
$$ 0 \to \DD[s] \EE[s] h^{s+1} \to \DD[s] \EE[s] h^s \to \calQ_{\EE}\to 0.$$
By applying the duality functor $\Dual$ we obtain a triangle
$$ \Dual (\calQ_{\EE}) \to \Dual \left( \DD[s] \EE[s] h^s \right) \to \Dual \left( \DD[s] \EE[s] h^{s+1} \right) \stackrel{+1}{\to}$$ in which the second arrow corresponds to the inclusion $\DD[s] \EE^*[s] h^{-s-1} \to \DD[s] \EE^*[s]h^{-s-2}$, $\Dual (\calQ_{\EE} ) $ is concentrated in degree $1$ and there is an exact sequence of left $\DD[s]$-modules
$$  0 \to \DD[s] \EE^*[s]  h^{-s-1} \to \DD[s] \EE^*[s]  h^{-s-2} \to \Dual^1 (\calQ_{\EE} ) \to 0.$$
Let $\varphi:\CC[s] \to \CC[s]$ be the automorphism of $\CC$-algebras determined by $\varphi(s)=-s-2$ and let us consider the exact 
sequence of left $\DD[s]$-modules
$$ 0 \to \DD[s] \EE^*[s] h^{s+1} \to \DD[s] \EE^*[s] h^s \to \calQ_{\EE^*}\to 0.$$
From (\ref{eq:ilc-varphi}) we deduce an isomorphism $  \varphi^* \left( \calQ_{\EE^*}\right) \simeq \Dual^1 (\calQ_{\EE}) $ and so the minimal polynomial of the action of $s$ on $\Dual^1 (\calQ_{\EE} ) $ is $\varphi(b_{\EE^*}(s)) = b_{\EE^*}(-s-2)$. On the other hand, the action of $s$ on $\Dual^1 (\calQ_{\EE} )$ is annihilated by $b_{\EE}(s)$ and we conclude that $b_{\EE}(s)$ is a multiple of $b_{\EE^*}(-s-2)$.
\medskip

In a symmetric way we deduce that $b_{\EE^*}(s)$ is a multiple of $b_{\EE}(-s-2)$, or equivalently $b_{\EE^*}(-s-2)$ is a multiple of $b_{\EE}(s)$, and so $ b_{\EE}(s) = \pm b_{\EE^*}(-s-2)$.
\end{proof}

\section{Open questions}

The starting point of this paper has been \cite{gran-schul-RIMS-2010} and the observed effect of duality on the Bernstein-Sato polynomial of some examples of integrable logarithmic connections with respect to quasi-homogeneous plane curves 
\cite{narvaez-ESI-2011}.
\medskip

In \cite{gran-schul-RIMS-2010}, the authors proved that the Bernstein-Sato polynomial of any regular special linear free divisor (in particular, any reductive linear free divisor), and of any reductive prehomogeneous determinant (these are the non-reduced version of linear free divisors) have the symmetry property $b(s)=\pm b(-s-2)$ (see Theorems 3.5 and 5.5 and the definition of the involved notions in \cite{gran-schul-RIMS-2010}). These results and our theorem \ref{teo:main-sym} overlap, but they are logically independent. On one hand, the results in \cite{gran-schul-RIMS-2010} only apply to invariants of some prehomogeneous vector spaces. On the other hand, our theorem \ref{teo:main-sym} cannot cover either the case of non-reduced reductive prehomogeneous determinants, since it only applies to reduced equations, or the case of reductive linear free divisor, since there are examples of such divisors which are not of differential linear type. Namely, D. Andres and J. Mart\'{\i}n-Morales have recently studied the example $D=\{h=0\} \subset M_{3,4}= \CC^{3 \times 4}$ in \cite[Proposition 7.12]{gran-mon-nieto-schul} by using the techniques in \cite{andres_PhD} and have found differential operators of order $2$ annihilating $h^s$ which are not in $\ann^{(1)}_{\DD[s]} h^s = \DD[s] \ann_{\calV [s]} h^s$.
\medskip

All the examples of free divisors given in \cite{nakayama_sekiguchi} are of linear Jacobian type and so they are covered by Theorem \ref{teo:main-sym}.

\begin{question} Is there a common generalization of Theorem \ref{teo:main-sym} and Theorems 3.5 and 5.5 in \cite{gran-schul-RIMS-2010}?
\end{question}

\begin{question} \label{question:sym-gen} 
The reduced Bernstein-Sato polynomial $\widetilde{b}(s) = \frac{b(s)}{s+1}$ of any quasi-homogeneous polynomial $h:\CC^d\to \CC$ with an isolated singularity at the origin satisfies the symmetry $\widetilde{b}(s)=\pm \widetilde{b}(-s-d)$. 
This is a consequence of a celebrated result of Malgrange \cite{mal_75}, namely that
the roots of $\widetilde{b}(s)$ for any isolated singularity are the eigenvalues of the connection on the saturated
Brieskorn lattice which equals the usual one precisely in the quasi-homogeneous case. In this case 
the spectrum and the roots of $\widetilde{b}(s)$ are (up to an overall shift) the same, hence
the symmetry of the spectrum gives the symmetry of the roots of $\widetilde{b}(s)$. 
\smallskip

In the non-quasi-homogeneous isolated singularity case, the spectrum is different from the roots of $\widetilde{b}(s)$, so that the
symmetry of the spectrum does not imply the symmetry of the roots of $\widetilde{b}(s)$.
\smallskip

Let us notice that in the quasi-homogeneous isolated singularity case, the roots of $\widetilde{b}(s)$ can be explicitly listed in terms of the weights of variables (cf. \cite[Corollary 3.9]{yano_1978}) and the symmetry can be checked directly. Also, R. Arcadias and the author observed that this symmetry can be obtained by using D-module duality theory.
\smallskip

The intersection of Theorem \ref{teo:main-sym} and the symmetry property for quasi-homogeneous isolated singularities is the case of quasi-homogeneous plane curves. Quasi-homogeneous isolated singularities are of linear Jacobian type and their Jacobian ideal are a complete intersection, and so Cohen-Macaulay. On the other hand, the Jacobian ideal of a singular free divisor is also Cohen-Macaulay of codimension $2$. 
\smallskip

By means of M. Saito's formula for the reduced $b$-function of the Thom-Sebastiani join of two germs, we can give, for any $e=2,\dots, d$, non-trivial examples of irreducible germs $h:(\CC^d,0) \to (\CC,0)$ such that the following properties hold: (a) the Jacobian ideal $J$ is of linear type; (b) 
$J$ is Cohen-Macaulay; and (c) the equality $\widetilde{b}_h(s)=\pm \widetilde{b}_h(-s-e)$ holds for $e=$ codimension of the singular locus of $\{h=0\}$. Nevertheless, (a) + (b) $\not\Rightarrow$ (c) as shown in Example \ref{ejemplo:det}.
\smallskip

The question of finding general criteria on $h$ implying property (c) and extending the known extreme cases of quasi-homogeneous isolated singularities ($e=d$) and free divisors of linear Jacobian type ($e=2$) seems interesting and remains open.
\end{question}

I owe Kari Vilonen the suggestion of checking the following example.

\begin{example}  \label{ejemplo:det} Let  $X$ be the vector space of square $n\times n$ complex matrices and $h=\det : X\to \CC$. It is well known that the Bernstein-Sato polynomial of $h$ is
 given by $ b_h(s) = (s+1)(s+2)\cdots (s+n)$ and so $ \widetilde{b}_h(s) = (s+2)\cdots (s+n)$. The Jacobian ideal $J$ of $h$ is generated by all the $(n-1)\times (n-1)$ minors of the generic matrix $(x_{ij})$. In particular the singular locus of $D=h^{-1}(0)$ consists of matrices of rank $\leq n-2$,
 $\dim D^{\text{sing}} = n^2 - 4$ and $e=\codim D^{\text{sing}} = 4$. We know that $J$ is of linear type \cite{huneke-86} and Cohen-Macaulay 
 \cite[page 25]{bruns-vetter-1988}. However, the equality $\widetilde{b}_h(-s-4) =\pm \widetilde{b}_h(s)$ only holds for $n=2$.
\end{example}

\begin{question}  Corollary \ref{cor:main-sym} applies to locally quasi-homogeneous free divisors after \cite[Theorem 5.6]{calde_nar_compo}), in particular to free hyperplane arrangements. However, the Bernstein-Sato polynomial of a non-free hyperplane arrangement does not satisfy in general the symmetry $b(s) = \pm b(-s-2)$. For instance, for  $h=x_1 x_2 x_3 (x_1+x_2+x_3)=0$ we have $b_h(s)= (s+1)^3(s+3/2)(s+3/4)(s+5/4)$, and  $b_h(s) \neq \pm b_h(-s-2)$. In fact, this symmetry property fails in many other examples of non-free hyperpane arrangements. Can we characterize the hyperplane arrangements whose Bernstein-Sato polynomial satisfy the above symmetry property?
\end{question}

\section*{Appendix A}

\setcounter{numero}{0}
\renewcommand{\thenumero}{(A.\arabic{numero})}

This appendix contains some basic notions and results about Lie-Rinehart algebras and their modules, a simple proof of the associativity law in \cite{calde_nar_ferrara} (see Theorem \ref{teoapp:main-1}) and a detailed proof of the duality theorem in  \cite{calde_nar_ferrara} (see Theorem \ref{teo:duality-ferrara}). To be brief, 
we have included only the statements and results strictly needed for the proof of the cited results and of Propositions \ref{prop:nuevomain-2} and \ref{prop:nuevomain-3}. There are variants of all these results which are left up to the reader (see Remark \ref{variants}).
\medskip

Let $k \to A$ be a homomorphism of commutative rings. Let us denote by $\Der_k(A)$ the $A$-module of $k$-linear
derivations $\lambda:A\to A$, which is a left sub-$A$-module of
$\End_k(A)$ closed by the bracket $[-,-]$. 
\medskip

A Lie-Rinehart algebra over $(k,A)$ (or a $(k,A)$-Lie algebra in \cite{rine-63}) is
an $A$-module $L$ endowed with a $k$-Lie algebra structure and an
$A$-linear map $\rho: L \to \Der_k(A)$, called {\em anchor map}, which is also a morphism of
Lie algebras and satisfies
$$ [\lambda,a\lambda'] = a[\lambda,\lambda'] + \rho(\lambda)(a)\lambda'$$
for $\lambda,\lambda'\in L$ and $a\in A$. To simplify, we write
$\lambda(a) := \rho(\lambda)(a)$ for
$\lambda\in L$ and $a\in A$.
\medskip

Let $(L,\rho),(L',\rho')$ be two Lie-Rinehart algebras over $(k,A)$. A map of Lie-Rinehart algebras $f:L\to L'$ is an $A$-linear map which is also a $k$-Lie algebra map and such that $\rho'\pcirc f= \rho$.
\medskip

It is  clear that $\Der_k(A)$ is a Lie-Rinehart algebra over $(k,A)$ with the identity as anchor map, and that for any Lie-Rinehart algebra $(L,\rho)$, $\rho$ is a map of Lie-Rinehart algebras.
\medskip

Let $L$ be a Lie-Rinehart algebra over $(k,A)$. A left $L$-module is an $A$-module $M$ endowed with
a $k$-bilinear action $(\lambda,m) \in L\times M \to \lambda m \in M$ such that 
$$(a\lambda)m = a(\lambda m),\  [\lambda,\lambda']m = \lambda(\lambda' m)-\lambda'(\lambda m),\ \lambda (am) = a(\lambda m) + \lambda(a) m$$
for all $a\in A$, $\lambda,\lambda'\in L$ and $m\in M$.
\smallskip

The $A$-module $A$ becomes a left $L$-module with the action $(\lambda,a)\in L\times A \mapsto \lambda(a)\in A$.
\smallskip

A right $L$-module is an $A$-module $Q$, where the multiplication by elements of $A$ is written on the right, endowed with a $k$-bilinear action $(q,\lambda) \in Q\times L \to q\lambda \in Q$ such that 
$$q(a\lambda) = (qa)\lambda,\  q[\lambda,\lambda'] = (q\lambda)\lambda'-(q\lambda')\lambda,\ 
(qa)\lambda  = (q\lambda)a - q\lambda(a)$$
for all $a\in A$, $\lambda,\lambda'\in L$ and $q\in Q$. 
\medskip

Let $L$ be a Lie-Rinehart algebra over $(k,A)$ and $U=\U(L)$ its {\em enveloping} (or {\em universal})
algebra (see \cite{rine-63}). It is endowed with an injective ring map $A\simeq U_0 \hookrightarrow U$ with $k \to U$ central, and a left $A$-linear map $L \to U$. Moreover, $U$ is generated as a ring by $A$ and the image of $L$, and it carries a canonical filtration $(U_r)_{r\geq 0}$ ($U_r$ is generated as left or right $A$-module by all products of elements of $L$ of length $\leq r$) such that $\gr U$ is a commutative $A$-algebra. 
\smallskip

From the universal property of $U$ (see \cite[pgs. 63-64]{MR1058984}) we have the following:
If $M$ is an $A$-module, to give a left $L$-module structure on $M$ is equivalent to extending its $A$-module structure to a left $U$-module structure.
Similarly, if $Q$ is an $A$-module, to give a right $L$-module structure on $Q$ is equivalent to extending its $A$-module structure to a right $U$-module structure.
\smallskip

The surjection $p\in U \mapsto p\cdot 1 \in A$ induces a canonical isomorphism of left $U$-modules 
$ U/U\cdot L  \simeq A$.
\medskip

\begin{example} \label{ejem:app}  (a) If $X$ is a complex smooth manifold, $p\in X$, $k=\CC$ and $A= \OO_{X,p}$ is the ring of germs at $p$ of holomorphic functions, then the enveloping algebra of the Lie-Rinehart algebra $\Der_\CC(\OO_{X,p})$ is the ring $\DD_{X,p}$ of germs at $p$ of linear differential operators with holomorphic coefficients in $X$.
\smallskip

\noindent (b) If $X$ is a complex smooth manifold, $D\subset X$ is a free divisor, $p\in D$, $k=\CC$, $A= \OO_{X,p}$ and $L=\fDer(-\log D)_p$ is the Lie-Rinehart algebra of germs of logarithmic vector fields with respect to $D$, then the enveloping algebra of $L$ is the ring of germs of logarithmic differential operators $\DD_{X,p}(-\log D)$, which coincides with the subring of $\DD_{X,p}$ generated by $\OO_{X,p}$ and $\fDer(-\log D)_p$  (see \cite[prop. 2.2.5]{calde_ens}).
\end{example}

\numero {\em Internal operations:} \label{internal-oper}   In what follows, $M, M_1, M_2, \dots $ will denote left $U$-modules and $Q, Q_1, Q_2, \dots $ right $U$-modules.
\smallskip It is well known that the $A$-modules $M_1\otimes_A M_2$, $\Hom_A(M_1,M_2)$ and $\Hom_A(Q_1,Q_2)$ (resp. $Q_1\otimes_A M_1$ and $\Hom_A(M_1,Q_1)$) have natural left (resp. right) $U$-module structures (cf. \cite[\S 2]{MR2000f:53109}).
\medskip

\inhibe
\noindent The $A$-module $M_1\otimes_A M_2$ has natural left $U$-module structure given by
 \begin{equation*} 
 \lambda(m_1\otimes m_2) = (\lambda m_1)\otimes m_2 + m_1\otimes (\lambda
m_2),\quad m_i\in M_i, i=1,2, \lambda\in L.
\end{equation*}
\noindent The $A$-module $\Hom_A(M_1,M_2)$ has a natural left $U$-module structure given by:
\begin{equation*}
(\lambda h)(m) = -h(\lambda m) + \lambda h(m), \quad h\in
\Hom_A(M_1,M_2), m\in M_1, \lambda\in L.
\end{equation*}
\noindent The $A$-module $Q_1\otimes_A M_1$ has a natural right $U$-module structure given by:
\begin{equation*}
(q\otimes m)\lambda = (q\lambda)\otimes m - q\otimes (\lambda
m),\quad m\in M_1, q\in Q_1, \lambda\in L.
\end{equation*}
\noindent The $A$-module $\Hom_A(M_1,Q_1)$ has a natural right $U$-module structure given by:
\begin{equation*} 
(h\lambda)(m) = h(\lambda m) +  h(m)\lambda, \quad h\in
\Hom_A(M_1,Q_1), m\in M_1, \lambda\in L.
\end{equation*}
\noindent The $A$-module $\Hom_A(Q_1,Q_2)$ has a natural left $U$-module structure given by:
\begin{equation*} 
(\lambda h)(q) = h(q\lambda ) - h(q)\lambda, \quad h\in
\Hom_A(Q_1,Q_2), q\in Q_1, \lambda\in L.
\end{equation*}
\endinhibe

\numero  \label{internal-oper-nat-iso} The natural $A$-linear maps
$ M_1 \otimes_A M_2 \simeq M_2 \otimes_A M_1,
 A\otimes_A M_2 \simeq M_2 \simeq \Hom_A(A,M_2),
[M_1 \otimes_A M_2] \otimes_A M_3 \simeq M_1 \otimes_A [M_2 \otimes_A M_3]$
are left $U$-linear, and the natural $A$-linear maps
$ Q_1\otimes_A A \simeq Q_1 \simeq \Hom_A(A,Q_1),
[Q_1\otimes_A M_1] \otimes_A M_2 \simeq Q_1 \otimes_A [M_1 \otimes_A M_2]$
are right $U$-linear.
\medskip

The proof of the following lemmas is straightforward. 

\begin{lemma}  \label{lema:varios-isos} The natural isomorphisms of $A$-modules 
\begin{eqnarray} \label{eq:f1}
&\Hom_A(M_1\otimes_A
M_2,M_3) \simeq \Hom_A(M_1,\Hom_A(M_2,M_3)),&\\
\label{eq:f2} &\Hom_A(Q_1\otimes_A M_1,Q_2) \simeq \Hom_A(M_1,\Hom_A(Q_1,Q_2)) &
\end{eqnarray}
are left $U$-linear.
\end{lemma}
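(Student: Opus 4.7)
The plan is to verify directly that the standard tensor-hom adjunction isomorphisms (which are well known to be $A$-linear) are compatible with the $L$-actions defined in \ref{internal-oper}. Since the enveloping algebra $U$ is generated as a ring by $A$ together with the image of $L$, combining $A$-linearity with $L$-equivariance immediately yields $U$-linearity.

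First I would fix the obvious candidate maps. For \eqref{eq:f1}, let $\Phi : \Hom_A(M_1 \otimes_A M_2, M_3) \to \Hom_A(M_1,\Hom_A(M_2,M_3))$ be given by $\Phi(h)(m_1)(m_2) = h(m_1 \otimes m_2)$, and analogously define $\Psi$ for \eqref{eq:f2} by $\Psi(h)(m)(q) = h(q \otimes m)$. These are classical $A$-linear isomorphisms, so the only content is to compute $\Phi(\lambda h)$ and $\lambda\Phi(h)$ (and similarly for $\Psi$) and match them for each $\lambda \in L$.

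For \eqref{eq:f1}, unwinding the definitions in \ref{internal-oper}, on the left hand side
$$\Phi(\lambda h)(m_1)(m_2) = (\lambda h)(m_1 \otimes m_2) = \lambda\bigl(h(m_1\otimes m_2)\bigr) - h\bigl((\lambda m_1)\otimes m_2\bigr) - h\bigl(m_1\otimes (\lambda m_2)\bigr),$$
using the Leibniz rule for the $L$-action on $M_1 \otimes_A M_2$. On the right hand side, applying the action on $\Hom_A(M_1,-)$ and then on $\Hom_A(M_2,M_3)$,
$$(\lambda \Phi(h))(m_1)(m_2) = \lambda\bigl(\Phi(h)(m_1)(m_2)\bigr) - \Phi(h)(m_1)(\lambda m_2) - \Phi(h)(\lambda m_1)(m_2),$$
which rewrites to the same three-term expression. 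For \eqref{eq:f2}, an entirely parallel computation, keeping track that $Q_2$ is a right module so the rightmost term $-h(q\otimes m)\lambda$ comes from the right action on $Q_2$, yields
$$\Psi(\lambda h)(m)(q) = h\bigl((q\lambda)\otimes m\bigr) - h\bigl(q\otimes (\lambda m)\bigr) - h(q\otimes m)\lambda = (\lambda\Psi(h))(m)(q).$$

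There is no real obstacle here: once the formulas of \ref{internal-oper} are written out, the check reduces to the Leibniz-style cancellation that underlies the standard adjunction in any closed symmetric monoidal setting. The only point requiring a moment of care is the bookkeeping of signs and sides in the second isomorphism, where the mixed left/right structure and the action on $\Hom_A(Q_1,Q_2)$ must be applied in the correct order. Having checked $L$-equivariance, the extension to $U$-linearity is automatic from the fact that $U$ is generated by $A$ and (the image of) $L$ and that $\Phi, \Psi$ are $A$-linear.
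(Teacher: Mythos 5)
Your proof is correct and is exactly the (implicit) argument the paper has in mind: the paper simply declares this lemma ``straightforward,'' and the intended content is precisely the direct verification of $L$-equivariance of the tensor-hom adjunction with the actions of \ref{internal-oper}, which, together with $A$-linearity and the fact that $U$ is generated by $A$ and the image of $L$, gives $U$-linearity. Both computations are carried out accurately, including the correct handling of the right action on $Q_2$ in \eqref{eq:f2}.
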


\begin{lemma}  \label{lema:varios-eval-hom} The natural $A$-linear map
$Q_1 \otimes_A \Hom_A(Q_1,Q_2)  \to Q_2$
is right $U$-linear.
Moreover, it is an isomorphisms if $Q_1$ is a free $A$-module of rank 1.
\end{lemma}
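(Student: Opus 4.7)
The plan is to verify both claims by direct computation with the Lie--Rinehart module structures on $\Hom_A(Q_1,Q_2)$ and on the tensor product. The left $U$-module structure on $\Hom_A(Q_1,Q_2)$ is given, for $\lambda\in L$, by $(\lambda h)(q)=h(q\lambda)-h(q)\lambda$, and correspondingly the right $U$-module structure on $Q_1\otimes_A\Hom_A(Q_1,Q_2)$ obeys $(q\otimes h)\lambda=(q\lambda)\otimes h-q\otimes(\lambda h)$. These formulas are determined by requiring that $A$ act through its position in $U_0$ and $L$ act by the Lie--Rinehart rules.

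The evaluation map $\mathrm{ev}\colon Q_1\otimes_A\Hom_A(Q_1,Q_2)\to Q_2$, $q\otimes h\mapsto h(q)$, is well-defined and right $A$-linear by construction (using $h(qa)=h(q)a$ for $A$-linearity of $h$). Since the enveloping algebra $U$ is generated as a ring by $A$ together with the image of $L$, to establish right $U$-linearity it is enough to check compatibility with the action of an arbitrary $\lambda\in L$. Unfolding the formulas and cancelling the two copies of $h(q\lambda)$ yields
\[
\mathrm{ev}\bigl((q\otimes h)\lambda\bigr)=h(q\lambda)-(\lambda h)(q)=h(q\lambda)-\bigl(h(q\lambda)-h(q)\lambda\bigr)=h(q)\lambda=\mathrm{ev}(q\otimes h)\,\lambda,
\]
which is precisely what is required.

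For the second assertion, assume $Q_1$ is free of rank one over $A$ with basis $e$. Every element of $Q_1\otimes_A\Hom_A(Q_1,Q_2)$ can be written uniquely in the form $e\otimes h$, by absorbing $A$-scalars across the tensor, and an $A$-linear map $h\colon Q_1\to Q_2$ is determined by its value $h(e)\in Q_2$. Hence the assignment $q_2\mapsto e\otimes h_{q_2}$, where $h_{q_2}$ is the unique $A$-linear map with $h_{q_2}(e)=q_2$, furnishes a two-sided inverse to $\mathrm{ev}$. I anticipate no genuine obstacle: the whole argument amounts to the two-term cancellation displayed above together with a one-line check on a basis, the only subtlety being notational bookkeeping of the sides on which $A$ and $L$ act at each step.
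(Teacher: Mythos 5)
Your proof is correct and is exactly the intended argument: the paper labels this lemma (together with \ref{lema:varios-isos}--\ref{lema:lema:tecnico-2}) as ``straightforward'' and gives no written proof, so the expected content is precisely the two-term cancellation you display for $U$-linearity and the basis identification for the rank-one case.
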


If $M_1$ is a projective $A$-module of finite rank, we denote $M_1^*=\Hom_A(M_1,A)$.

\begin{lemma} \label{coro:varios-can-hom} Assume that $M_1$ is a projective $A$-module of finite rank. Then, the natural map
$  M_1^*\otimes_A M_2 \to \Hom_A(M_1,M_2)$ is an isomorphism of left $U$-modules.
\end{lemma}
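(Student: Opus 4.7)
The plan is to separate the statement into two independent assertions: first, that the natural map $\Phi: M_1^*\otimes_A M_2 \to \Hom_A(M_1,M_2)$ sending $\varphi\otimes m$ to the $A$-linear map $x\mapsto \varphi(x)m$ is an isomorphism of $A$-modules, and second, that it is compatible with the left $L$-actions (and hence with the left $U$-actions, by the universal property of $U$).

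For the first assertion, the map $\Phi$ is defined on elementary tensors by the prescription above and extended by $A$-bilinearity; this is standard commutative algebra. When $M_1=A$ the map $\Phi$ reduces to the natural isomorphism $A\otimes_A M_2 \simeq M_2 \simeq \Hom_A(A,M_2)$ of \ref{internal-oper-nat-iso}. Both sides of $\Phi$ are additive functors of $M_1$ that commute with finite direct sums, so the conclusion extends to $M_1=A^n$, and then to an arbitrary finitely generated projective $M_1$ by taking a direct summand of some $A^n$ and using that $\Phi$ is natural in $M_1$.

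For the second assertion, I would just verify by direct computation that $\Phi$ commutes with the action of every $\lambda\in L$, using the $L$-module structures recalled in \ref{internal-oper}. Concretely, for $\varphi\in M_1^*$, $m\in M_2$ and $x\in M_1$ one has
\[
\Phi\bigl(\lambda(\varphi\otimes m)\bigr)(x) = \Phi\bigl((\lambda\varphi)\otimes m + \varphi\otimes (\lambda m)\bigr)(x) = (\lambda\varphi)(x)\,m + \varphi(x)\,(\lambda m),
\]
which, after expanding $(\lambda\varphi)(x)=-\varphi(\lambda x)+\lambda(\varphi(x))$, equals $-\varphi(\lambda x)\,m + \lambda(\varphi(x))\,m + \varphi(x)\,\lambda m$. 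On the other hand, using the $L$-action on $\Hom_A(M_1,M_2)$,
\[
\bigl(\lambda\,\Phi(\varphi\otimes m)\bigr)(x) = -\Phi(\varphi\otimes m)(\lambda x) + \lambda\bigl(\Phi(\varphi\otimes m)(x)\bigr) = -\varphi(\lambda x)\,m + \lambda\bigl(\varphi(x)\,m\bigr),
\]
and the Leibniz rule $\lambda(\varphi(x)m) = \lambda(\varphi(x))\,m + \varphi(x)\,\lambda m$ shows that both expressions agree. Hence $\Phi$ intertwines the two $L$-actions, which by the universal property of $U$ means it is left $U$-linear.

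There is no real obstacle here: the whole argument is a bookkeeping exercise with the formulas of \ref{internal-oper}. The only minor point to keep straight is the sign convention in the action on $\Hom_A$, so that the two negative contributions $-\varphi(\lambda x)\,m$ on each side cancel correctly. Once that is done, combining the $A$-linear isomorphism of the first step with the $U$-equivariance of the second yields the stated isomorphism of left $U$-modules.
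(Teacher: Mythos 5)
Your proof is correct and is exactly the ``straightforward'' verification the paper has in mind (the paper omits the proof, declaring the lemma straightforward): reduce the $A$-linear isomorphism to the free case by naturality and direct summands, and check $L$-equivariance on elementary tensors via the Leibniz-rule formulas of \ref{internal-oper}, which then gives $U$-linearity by the universal property of $U$. Nothing to add.
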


\begin{lemma}  \label{lema:varios-can-hom-2} The natural $A$-linear map
$M_1  \to \Hom_A(Q_1,Q_1\otimes_A M_1)$
is left $U$-linear.
Moreover, it is an isomorphism if $Q_1$ is a free $A$-module of rank 1.
\end{lemma}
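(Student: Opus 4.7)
The plan is straightforward: the candidate natural map $\phi: M_1 \to \Hom_A(Q_1, Q_1 \otimes_A M_1)$ should be defined by $\phi(m)(q) = q \otimes m$, and the two claims (left $U$-linearity, and bijectivity in the rank-one case) can be settled by direct unwinding of the formulas for the internal operations.

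First, I would verify that $\phi$ is $A$-linear (immediate from the $A$-bilinearity of $\otimes_A$). Then for $U$-linearity, I would compute both sides of $\lambda \cdot \phi(m) \stackrel{?}{=} \phi(\lambda m)$ on an arbitrary $q \in Q_1$, using the formulas recalled in \ref{internal-oper}. On the left,
\[
(\lambda \cdot \phi(m))(q) \;=\; \phi(m)(q\lambda) - \phi(m)(q)\lambda \;=\; (q\lambda)\otimes m - (q\otimes m)\lambda,
\]
and expanding $(q\otimes m)\lambda = (q\lambda)\otimes m - q\otimes (\lambda m)$ gives exactly $q \otimes (\lambda m) = \phi(\lambda m)(q)$. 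This cancellation of the $(q\lambda)\otimes m$ terms is really the whole content of the statement: the formula for the right $U$-action on $Q_1 \otimes_A M_1$ is cooked up precisely so this works.

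For the second assertion, assume $Q_1 = A q_0$ is free of rank one. Then the $A$-linear isomorphism $Q_1 \otimes_A M_1 \simeq M_1$ sending $q_0 \otimes m$ to $m$, combined with $\Hom_A(Q_1, -) \simeq \Hom_A(A,-) \simeq \mathrm{id}$, identifies $\Hom_A(Q_1, Q_1 \otimes_A M_1)$ with $M_1$ as an $A$-module, under which $\phi$ becomes the identity. Since we have already shown $\phi$ is $U$-linear, this identification promotes it to an isomorphism of left $U$-modules.

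I do not anticipate any genuine obstacle: the only mild point of care is the bookkeeping of signs and placements in the formulas for $(\lambda h)(q)$ and $(q\otimes m)\lambda$ from \ref{internal-oper}, and ensuring that in the rank-one reduction one does not accidentally invoke a stronger hypothesis on $M_1$ (none is needed here, unlike in \ref{coro:varios-can-hom}). The argument is formally dual to that of Lemma \ref{lema:varios-eval-hom}, with the roles of evaluation and coevaluation swapped.
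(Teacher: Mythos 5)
Your proof is correct, and since the paper simply remarks that ``the proof of the following lemmas is straightforward'' without giving details, your direct verification is exactly the intended argument. The computation $(\lambda\cdot\phi(m))(q)=(q\lambda)\otimes m-(q\otimes m)\lambda = q\otimes(\lambda m)$ using the formulas from \ref{internal-oper}, and the rank-one reduction via $Q_1\otimes_A M_1\simeq M_1$ and $\Hom_A(Q_1,-)\simeq\mathrm{id}$, are precisely what is needed; the observation that this is the coevaluation companion to the evaluation in Lemma \ref{lema:varios-eval-hom} is also apt.
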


\begin{lemma} \label{lema:U-A} For any left $U$-modules $M$ and $M'$ (resp. for any right $U$-modules $Q$ and $Q'$),
 a map $h\in \Hom_A(M,M')$ (resp. $h\in\Hom_A(Q,Q')$) is $U$-linear if and only if $\lambda h =0$ for all $\lambda \in L$. Consequently 
there are canonical isomorphisms   $\Hom_U(M,M') \simeq \Hom_U(A, \Hom_A(M,M'))$, $
\Hom_U(Q,Q') \simeq \Hom_U(A, \Hom_A(Q,Q'))$.
\end{lemma}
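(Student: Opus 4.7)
The plan is to prove both the ``pointwise'' criterion and the canonical isomorphism in parallel for the left- and right-module statements, since the two proofs run along exactly the same lines once we unravel the definitions.

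First, for the criterion itself, recall that the $U$-module structures introduced in \ref{internal-oper} give
\[
(\lambda h)(m) = \lambda h(m) - h(\lambda m), \qquad (\lambda h)(q) = h(q\lambda) - h(q)\lambda,
\]
for $h\in\Hom_A(M,M')$ (resp.\ $h\in\Hom_A(Q,Q')$), $\lambda\in L$, $m\in M$, $q\in Q$. Hence the condition $\lambda h=0$ for every $\lambda\in L$ says precisely that $h$ commutes with the action of $L$. Since $U$ is generated as a ring by $A$ and (the image of) $L$, an $A$-linear map commuting with every $\lambda\in L$ is automatically $U$-linear; conversely, any $U$-linear map commutes with each $\lambda$. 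This proves the first assertion in both the left and right cases.

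Next I would establish the second assertion by identifying, for any left $U$-module $N$, the set $\Hom_U(A,N)$ with the $L$-invariants $N^L := \{n\in N \mid \lambda n=0 \text{ for all } \lambda\in L\}$. This uses the canonical isomorphism $U/U\cdot L \simeq A$ recalled before Example \ref{ejem:app}: a $U$-linear map $A\to N$ is determined by the image of $1\in A$, and that image must be killed by every element of $L$. Applying this with $N=\Hom_A(M,M')$ (viewed as a left $U$-module) gives
\[
\Hom_U\bigl(A, \Hom_A(M,M')\bigr) \simeq \Hom_A(M,M')^{L} = \Hom_U(M,M'),
\]
where the last equality is the criterion proved in the previous paragraph. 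The same argument applied to $N=\Hom_A(Q,Q')$ (which by \ref{internal-oper} is naturally a left $U$-module) yields the analogous isomorphism in the right-module case.

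Finally I would check naturality: the asserted isomorphisms send a $U$-linear $h:M\to M'$ to the map $a\mapsto a\cdot h$ (multiplication by $a\in A$ in $\Hom_A(M,M')$, which is $A$-linear since $A$ acts centrally on $\Hom_A$), and similarly in the right case. This is clearly functorial in both variables. No real obstacle is expected here; the only mild subtlety is keeping the signs and side conventions in the $U$-actions on the various $\Hom$- and $\otimes$-spaces straight, which is exactly what the computation of $\lambda h$ above settles.
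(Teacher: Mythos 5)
Your proof is correct. The paper itself gives no argument for this lemma: it is one of several statements preceded by the remark that the proofs are straightforward, so there is no ``paper proof'' to compare against. Your treatment is the natural one — unwind the $U$-module structure on the internal $\Hom$, observe that $\lambda h=0$ for all $\lambda\in L$ is precisely compatibility with the $L$-action (hence, since $U$ is generated by $A$ and the image of $L$, with the full $U$-action), and use $U/U\cdot L\simeq A$ to identify $\Hom_U(A,N)$ with the $L$-invariants $N^L$ — and it handles the left- and right-module cases correctly in parallel.
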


\begin{lemma} \label{lema:tecnico-1} The $U$-linear isomorphism (\ref{eq:f1})
in Lemma
 \ref{lema:varios-isos} and Lemma \ref{lema:U-A} induce a $k$-linear isomorphism
 $$ \beta: \Hom_U(M_1\otimes_A
M_2,M_3) \xrightarrow{} \Hom_U(M_1,\Hom_A(M_2,M_3)).$$
\end{lemma}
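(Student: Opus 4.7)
The plan is to obtain $\beta$ as the composition of the three canonical isomorphisms naturally provided by the two cited lemmas. Specifically, applying $\Hom_U(A,-)$ to both sides of the $U$-linear isomorphism (\ref{eq:f1}) and then using Lemma \ref{lema:U-A} twice to rewrite each $\Hom_U(A,\Hom_A(-,-))$ as $\Hom_U(-,-)$ produces a chain of $k$-linear isomorphisms
$$
\Hom_U(M_1\otimes_A M_2,M_3) \;\simeq\; \Hom_U\!\bigl(A,\Hom_A(M_1\otimes_A M_2,M_3)\bigr) \;\simeq\; \Hom_U\!\bigl(A,\Hom_A(M_1,\Hom_A(M_2,M_3))\bigr) \;\simeq\; \Hom_U(M_1,\Hom_A(M_2,M_3)),
$$
whose composition I would take as the definition of $\beta$.

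To carry this out, I first need to observe that since (\ref{eq:f1}) is $U$-linear, it induces an isomorphism of abelian groups $\Hom_U(A,-)$ on both sides (a trivial functorial fact). Then, Lemma \ref{lema:U-A} (in its form giving the canonical identification $\Hom_U(M,M')\simeq \Hom_U(A,\Hom_A(M,M'))$) is directly applied twice, once to the source and once to the target. Finally, $k$-linearity of $\beta$ is automatic because $k\to A\to U$ is central and each step in the chain is $k$-linear (indeed $A$-linear on the Hom spaces).

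For concreteness, the unraveled description of $\beta$ is the familiar tensor-hom adjunction: for $\varphi\in \Hom_U(M_1\otimes_A M_2, M_3)$,
$$
\beta(\varphi)(m_1)(m_2) \;=\; \varphi(m_1\otimes m_2),\qquad m_1\in M_1,\ m_2\in M_2.
$$
There is no real obstacle; the only point requiring verification is compatibility of the $U$-actions in the two possible viewpoints, and this is already packaged into Lemmas \ref{lema:varios-isos} and \ref{lema:U-A}. Thus the proof reduces to stringing together the two stated lemmas and remarking that an isomorphism of left $U$-modules becomes, under $\Hom_U(A,-)$, an isomorphism of $k$-modules.
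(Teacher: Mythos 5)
Your argument is correct and is exactly the chain of identifications the lemma's statement is pointing at; the paper in fact gives no proof, deeming it immediate from Lemmas (\ref{eq:f1}) and \ref{lema:U-A}. Your composite $\Hom_U(M_1\otimes_A M_2,M_3)\simeq \Hom_U(A,\Hom_A(M_1\otimes_A M_2,M_3))\simeq \Hom_U(A,\Hom_A(M_1,\Hom_A(M_2,M_3)))\simeq \Hom_U(M_1,\Hom_A(M_2,M_3))$ (applying Lemma \ref{lema:U-A} on either end and functoriality of $\Hom_U(A,-)$ to the $U$-linear isomorphism (\ref{eq:f1}) in the middle) is precisely the intended $\beta$, and the unraveled formula $\beta(\varphi)(m_1)(m_2)=\varphi(m_1\otimes m_2)$ confirms it is the usual tensor--hom adjunction.
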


\begin{lemma} \label{lema:tecnico-2} The $U$-linear isomorphism (\ref{eq:f2})
 in Lemma
 \ref{lema:varios-isos} and Lemma \ref{lema:U-A} induce a $k$-linear isomorphism
 $$ \gamma: \Hom_U(Q_1\otimes_A M_1,Q_2) \xrightarrow{\simeq} \Hom_U(M_1,\Hom_A(Q_1,Q_2)).$$
\end{lemma}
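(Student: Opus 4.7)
The plan is to construct $\gamma$ by combining the $U$-linear isomorphism $(\ref{eq:f2})$ with two applications of Lemma \ref{lema:U-A}, exactly paralleling the argument used for $\beta$ in Lemma \ref{lema:tecnico-1}. First I would verify the $U$-module types involved. Since $Q_1\otimes_A M_1$ is a right $U$-module (by the construction recalled in \ref{internal-oper}) and $Q_2$ is a right $U$-module, the $A$-module $\Hom_A(Q_1\otimes_A M_1, Q_2)$ carries a natural \emph{left} $U$-module structure; likewise $\Hom_A(Q_1, Q_2)$ is a left $U$-module, hence so is $\Hom_A(M_1, \Hom_A(Q_1, Q_2))$.

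Next, Lemma \ref{lema:varios-isos} asserts that the natural $A$-linear isomorphism $(\ref{eq:f2})$ is actually left $U$-linear between these two left $U$-modules. Applying the functor $\Hom_U(A,-)$ to it yields an isomorphism
\[
\Hom_U\!\bigl(A,\Hom_A(Q_1\otimes_A M_1, Q_2)\bigr) \xrightarrow{\simeq} \Hom_U\!\bigl(A,\Hom_A(M_1,\Hom_A(Q_1,Q_2))\bigr).
\]
Finally, the right-module instance of Lemma \ref{lema:U-A} identifies the left-hand side with $\Hom_U(Q_1\otimes_A M_1, Q_2)$, while the left-module instance identifies the right-hand side with $\Hom_U(M_1,\Hom_A(Q_1,Q_2))$. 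Composing these three canonical identifications defines the desired $k$-linear isomorphism $\gamma$.

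There is no real obstacle here; everything reduces to naturality and bookkeeping of module structures. The only thing worth making explicit is the formula: unwinding the definitions, $\gamma$ sends a right-$U$-linear morphism $h:Q_1\otimes_A M_1 \to Q_2$ to the assignment $m\mapsto\bigl(q\mapsto h(q\otimes m)\bigr)$. That this assignment lands in $\Hom_A(Q_1,Q_2)$ and is left $U$-linear in $m$ is a direct consequence of the $U$-linearity of $(\ref{eq:f2})$, which by Lemma \ref{lema:U-A} is equivalent to the condition $\lambda \cdot h = 0$ for all $\lambda \in L$ that characterises $U$-linearity of $h$ itself. The inverse is constructed symmetrically, confirming bijectivity.
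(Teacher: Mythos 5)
Your proof is correct and takes exactly the route the paper intends: the authors declare this lemma (along with \ref{lema:varios-isos}--\ref{lema:tecnico-1}) to be "straightforward," and the lemma statement itself names the ingredients — the left $U$-linearity of (\ref{eq:f2}) and the two directional instances of Lemma \ref{lema:U-A} — which you combine in the expected way, down to the explicit formula $\gamma(h)(m)(q)=h(q\otimes m)$.
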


Assume now that 
$L_0 \to L$ is 
a map of Lie-Rinehart algebras over $(k,A)$, which induces a
ring map $U_0=\U(L_0)\to U=\U(L)$ between their enveloping
algebras. 
\medskip

Given a right $U$-module $Q$ and a left
$U_0$-module $M_0$, we know that $Q\otimes_A M_0$ and $Q \otimes_A
[U\otimes_{U_0} M_0]$ have natural right module structures over
$U_0$ and $U$, respectively, and that the map 
$$\sigma:
Q\otimes_A M_0 \to Q\otimes_A[U\otimes_{U_0} M_0],\quad \sigma(q\otimes
m) = q\otimes [1\otimes m]$$
is $U_0$-linear.
\medskip

The following theorem gives the associativity law needed in the proof of the duality theorem \ref{teo:duality-ferrara}. The proof we give here simplifies the original proof in \cite{calde_nar_ferrara}.

\begin{theorem} \label{teoapp:main-1} 
The $U$-linear map
$$\tau: [Q\otimes_A M_0]\otimes_{U_0} U \to
Q\otimes_A[U\otimes_{U_0} M_0]$$induced by $\sigma$ is an isomorphism
of right $U$-modules.
\end{theorem}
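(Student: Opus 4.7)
The plan is to construct $\tau$ via the universal property of the relative tensor product, reduce to the base case $M_0 = U_0$ using right-exactness, and verify that case via an associated-graded argument using the PBW-type filtration on $U$.

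First, I would check that $\sigma: Q\otimes_A M_0 \to Q\otimes_A[U\otimes_{U_0}M_0]$ is right $U_0$-linear by a direct computation: for $\lambda\in L_0$,
\begin{eqnarray*}
\sigma((q\otimes m_0)\cdot \lambda) &=& q\lambda\otimes [1\otimes m_0] - q\otimes [1\otimes \lambda m_0],\\
\sigma(q\otimes m_0)\cdot \lambda &=& q\lambda\otimes [1\otimes m_0] - q\otimes [\lambda\otimes m_0],
\end{eqnarray*}
and these agree because $[\lambda\otimes m_0] = [1\otimes \lambda m_0]$ in $U\otimes_{U_0} M_0$ (since $\lambda\in U_0$). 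By the universal property of $-\otimes_{U_0} U$ this produces the right $U$-linear map $\tau$, which is natural in $M_0$.

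Second, both functors $F(M_0) := [Q\otimes_A M_0]\otimes_{U_0} U$ and $G(M_0) := Q\otimes_A[U\otimes_{U_0} M_0]$, from left $U_0$-modules to right $U$-modules, are right-exact (each being a composition of right-exact tensor products) and commute with arbitrary direct sums. Applying both functors to a free presentation $U_0^{(J)}\to U_0^{(I)}\to M_0\to 0$ and invoking the five-lemma on the resulting commutative diagram of right-exact sequences reduces the problem to showing that $\tau_{U_0}$ is an isomorphism.

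For the base case $M_0 = U_0$, identify $G(U_0)$ with $Q\otimes_A U$ via the canonical isomorphism $U\otimes_{U_0} U_0 \simeq U$, $u\otimes u_0 \mapsto uu_0$; then $\tau_{U_0}$ sends $(q\otimes u_0)\otimes u$ to $(q\otimes u_0)\cdot u$, where the right $U$-action on $Q\otimes_A U$ is the ``connection-like'' action inherited from $U$ as a left $U$-module. I would verify this is an isomorphism by equipping both sides with the increasing filtrations induced by the PBW-type filtration $(F_n U)$ on $U$ (products of at most $n$ elements of $L$), checking that $\tau_{U_0}$ preserves these filtrations, and showing it induces an isomorphism on associated gradeds. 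On the graded level, the correction term $q\lambda\otimes u_0$ in $(q\otimes u_0)\cdot \lambda = q\lambda\otimes u_0 - q\otimes \lambda u_0$ is of strictly lower filtration degree than the leading term, so the graded map reduces (up to a sign in each degree) to the classical associativity isomorphism $Q\otimes_A \gr U_0\otimes_{\gr U_0}\gr U \simeq Q\otimes_A \gr U$ for tensor products over the commutative ring $\gr U$. This base-case verification is the technical heart of the proof; the previous two steps are formal.
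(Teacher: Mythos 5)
Your overall plan — reduce by right-exactness and naturality to the base case $M_0 = U_0$, then settle the base case by a filtration argument — is a genuinely different route from the paper's. The paper proves the theorem in one stroke by a Yoneda-type argument: it shows that $\tau^*:\Hom_U(G(M_0),Q')\to\Hom_U(F(M_0),Q')$ is an isomorphism for every right $U$-module $Q'$, by fitting it into a commutative square whose other three sides are the base-change adjunctions $\alpha_1^*,\alpha_2^*$ and the two tensor-hom adjunctions $\gamma,\gamma_0$ of Lemma \ref{lema:tecnico-2}. That argument needs no induction and no filtration, and it is what makes the whole appendix short. Your steps (1) and (2) are sound: the verification that $\sigma$ is $U_0$-linear is the same one the paper takes for granted, and the reduction to $M_0=U_0$ via a free presentation, right-exactness of both $F$ and $G$, compatibility with direct sums, naturality of $\tau$ and the five lemma is correct.

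The gap is in step (3). You assert that the associated graded of $F(U_0)=[Q\otimes_A U_0]\otimes_{U_0}U$ with respect to the order filtration on $U$ is $(Q\otimes_A\gr U_0)\otimes_{\gr U_0}\gr U$, and that the induced map to $\gr G(U_0)=Q\otimes_A\gr U$ is (up to sign) the tautological associativity isomorphism. But the passage from $\gr$ of a relative tensor product over a filtered noncommutative ring to the tensor product of the associated gradeds is not automatic; the natural comparison map $\gr M\otimes_{\gr U_0}\gr N\to\gr(M\otimes_{U_0}N)$ is in general only surjective, and it need not be injective without a flatness hypothesis (here $\gr U$ is not assumed free or flat over $\gr U_0$ — indeed in the paper's main application $L_0$ is not a direct summand of $L$). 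Moreover, $F_nU$ is not a $U_0$-submodule of $U$, so the filtration on $F(U_0)$ must be defined as the filtration by images, and one must check it is exhaustive and that the $\gr$-terms are as expected; none of this is addressed. A way to close the gap along your lines is to observe that the canonical base-change map $\rho:(Q\otimes_A U,\pcirc)\to F(U_0)$, $q\otimes u\mapsto(q\otimes 1)\otimes u$, is surjective (one reduces $(q\otimes u_0)\otimes u$ to the image of $\rho$ by repeatedly applying the $\star$-relations), and that $\tau_{U_0}\pcirc\rho$ is the filtration-preserving map $(Q\otimes_A U,\pcirc)\to(Q\otimes_A U,\star)$, $q\otimes u\mapsto(q\otimes 1)\star u$, for which the $\gr$-argument does apply on the common underlying $A$-module $Q\otimes_A U$ (its $\gr$ is $(-1)^n\cdot\mathrm{id}$ in degree $n$); then $\tau_{U_0}\pcirc\rho$ an isomorphism forces $\rho$ to be injective, hence bijective, hence $\tau_{U_0}$ bijective. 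This is essentially the content of the paper's Corollary \ref{coro1:main-1}, which the paper derives \emph{from} the theorem rather than using it as input. As written, your base case is not a complete proof.
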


\begin{proof} It is enough to prove that for any right $U$-module $Q'$, the induced map
$$ \tau^*: \Hom_{U}(Q\otimes_A[U\otimes_{U_0} M_0],Q') \xrightarrow{} \Hom_{U}([Q\otimes_A M_0]\otimes_{U_0} U,Q')$$
is an isomorphism, and for that we consider the following commutative diagram 

\begin{equation*}
\begin{CD}
\Hom_U([Q\otimes_A M_0]\otimes_{U_0} U,Q') @<{\tau^*}<<
\Hom_U(Q\otimes_A[U\otimes_{U_0} M_0],Q')\\
@V{\alpha_1^*}V{\simeq}V @V{\gamma}V{\simeq\ \text{by \ref{lema:tecnico-2}}}V\\
\Hom_{U_0}(Q\otimes_A M_0,Q')  @. \Hom_U(U\otimes_{U_0}
M_0,\Hom_A(Q,Q'))\\
@V{\simeq\ \text{by \ref{lema:tecnico-2}}}V{\gamma_0}V @V{\simeq}V{\alpha_2^*}V\\
\Hom_{U_0}(M_0,\Hom_A(Q,Q')) @= \Hom_{U_0}(M_0,\Hom_A(Q,Q')).
\end{CD}
\end{equation*}
where 
 $\alpha_1:Q\otimes_A M_0 \xrightarrow{} [Q\otimes_A M_0]\otimes_{U_0} U$ is the natural (right) $U_0$-linear map defined by $\alpha_1(\xi ) = \xi\otimes 1$ for $\xi \in Q\otimes_A M_0$ and 
$\alpha_2:M_0 \xrightarrow{} U\otimes_{U_0} M_0$ is the natural (left) $U_0$-linear map defined by $\alpha_2(m) = 1\otimes m$ for $m \in M_0$.
\end{proof}

Let us notice that $Q\otimes_A U$ has two right $U$-module structures: the first one comes by scalar extension $A\to U$ from the $A$-module structure on $Q$ (we forget here the right $U$-module structure on $Q$):  
$$(q \otimes p)\pcirc p' := q\otimes (pp'),\quad q\in Q, p, p'\in U,$$
and the second one comes by \ref{internal-oper} from the right $U$-module structure on $Q$ and the left $U$-module structure on $U$:
$$ (q \otimes p)\star a := (qa) \otimes p = q \otimes (ap), \quad q\in Q, p\in U, a\in A,$$
$$ (q \otimes p)\star \lambda := (q\lambda)\otimes p - q\otimes (\lambda p),\quad q\in Q, p\in U, \lambda\in L.$$
It is clear that $(\xi \pcirc p)\star a = (\xi \star a)\pcirc p$, $(\xi\pcirc p)\star \lambda = (\xi\star \lambda)\pcirc p$ for any $\xi\in Q\otimes_A U$, $p\in U$, $a\in A$ and $\lambda\in L$, and so $(\xi \pcirc p)\star p'= (\xi \star p')\pcirc p$ for any $\xi\in Q\otimes_A U$, $p,p'\in U$.

Let us also notice that both structures induce by scalar restriction two different $A$-module structures on $Q\otimes_A U$: for the first one $(q\otimes p)\pcirc a = q\otimes (pa)$, and for the second one $(q\otimes p)\star a = (qa)\otimes p = q\otimes (ap)$.

\begin{corollary}  \label{coro1:main-1}
Under the above conditions, there is a unique $U$-linear map $\tau: (Q\otimes_A U, \pcirc) \to (Q\otimes_A U,\star)$ such that $\tau(q\otimes 1) = q\otimes 1$ for all $q\in Q$. 
Moreover, $\tau$ is an isomorphism, $\tau(\xi \star p) = \tau(\xi)\pcirc p$ for all $\xi\in Q\otimes_A U$ and for all $p\in U$, and $\tau$ is involutive.
\end{corollary}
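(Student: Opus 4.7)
The plan is to derive the corollary directly from Theorem \ref{teoapp:main-1}, specialised to the trivial situation $L_0=0$ (so $U_0=A$) and $M_0=A$. Under the canonical identifications $Q\otimes_A A\simeq Q$ and $U\otimes_A A\simeq U$, the source $[Q\otimes_A A]\otimes_A U$ becomes $Q\otimes_A U$ endowed precisely with the scalar-extension action $\pcirc$, and the target $Q\otimes_A[U\otimes_A A]$ becomes $Q\otimes_A U$ endowed precisely with the action $\star$ built via \ref{internal-oper} from the right $U$-action on $Q$ and the left-multiplication action on $U$. The theorem then produces a right $U$-linear isomorphism $\tau:(Q\otimes_A U,\pcirc)\xrightarrow{\simeq}(Q\otimes_A U,\star)$, and the defining formula $\sigma(q\otimes m)=q\otimes [1\otimes m]$ immediately yields $\tau(q\otimes 1)=q\otimes 1$. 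Uniqueness is clear since the elements $q\otimes 1$ generate $Q\otimes_A U$ as a right $U$-module for $\pcirc$: any $\tau'$ with the required properties must satisfy $\tau'(q\otimes p)=\tau'((q\otimes 1)\pcirc p)=(q\otimes 1)\star p$, which also yields the explicit formula $\tau(q\otimes p)=(q\otimes 1)\star p$.

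The crossed linearity $\tau(\xi\star p)=\tau(\xi)\pcirc p$ will be proved by induction on $p$ through the algebra generators $A\cup L$ of $U$. Multiplicativity in $p$ is immediate: if the identity holds for $p_1$ and $p_2$, then
$$\tau(\xi\star p_1 p_2)=\tau((\xi\star p_1)\star p_2)=\tau(\xi\star p_1)\pcirc p_2=(\tau(\xi)\pcirc p_1)\pcirc p_2=\tau(\xi)\pcirc(p_1 p_2).$$
Hence it suffices to verify the identity for $p\in A$ and for $p\in L$, with $\xi=q\otimes p'$ arbitrary. Both verifications reduce to short calculations that combine the explicit formula $\tau(q\otimes p')=(q\otimes 1)\star p'$, the defining formulas for $\pcirc$ and $\star$, the commutation $(\xi\pcirc p')\star p=(\xi\star p)\pcirc p'$ between the two actions, and (in the case $p\in L$) the identity $(q\lambda\otimes 1)-((q\otimes 1)\star\lambda)=q\otimes\lambda$ together with the enveloping-algebra relation $\lambda a-a\lambda=\lambda(a)$.

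Involutivity is then immediate from crossed linearity applied to $\xi=q\otimes 1$:
$$\tau^2(q\otimes p)=\tau((q\otimes 1)\star p)=\tau(q\otimes 1)\pcirc p=(q\otimes 1)\pcirc p=q\otimes p,$$
and since the simple tensors $q\otimes p$ span $Q\otimes_A U$ as a $k$-module, $\tau^2=\Id$. The main obstacle is the base case $p\in L$ of the crossed-linearity induction; once that short bookkeeping computation is carried out correctly, the remainder of the argument is essentially formal.
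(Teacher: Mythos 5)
Your proof is correct and follows the same overall skeleton as the paper's: specialise Theorem \ref{teoapp:main-1} to $L_0=0$, $U_0=A$, $M_0=A$ to get the isomorphism $\tau$ with $\tau(q\otimes p)=(q\otimes 1)\star p$ (whence uniqueness), prove the crossed-linearity $\tau(\xi\star p)=\tau(\xi)\pcirc p$, and deduce involutivity formally from it. The one point where you organise things differently is the crossed-linearity step: the paper carries out a two-variable induction on $\deg\xi+\deg p$ using the canonical filtration on $U$ (with separate subcases $\deg\xi=0$ and $\deg\xi>0$), whereas you fix $\xi$ to range over simple tensors and induct on $p$ alone through the algebra generators $A\cup L$ of $U$, noting via your multiplicativity computation that the set of $p$ for which the identity holds for all $\xi$ is a $k$-subalgebra. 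Your organisation is a bit cleaner — it avoids introducing the filtration and reduces to two symmetric base cases, $p\in A$ and $p\in L$, which indeed close quickly using $(\xi\pcirc p')\star p=(\xi\star p)\pcirc p'$, the definition of $\star$ restricted to $A$ and $L$, and the identity $(q\lambda)\otimes 1-(q\otimes 1)\star\lambda=q\otimes\lambda$ — but the two arguments rely on identical ingredients and are of comparable length, so the gain is organisational rather than substantive.
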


\begin{proof}  Let us consider the $A$-linear map $\sigma: Q \to  Q\otimes_A U$ defined by $\sigma (q) = q\otimes 1$, where we consider on $Q\otimes_A U$ the $A$-module structure induced by the second right $U$-module structure. The map $\sigma$ induces a map of right $U$-modules
 $$\tau: (Q\otimes_A U, \pcirc) \to (Q\otimes_A U, \star)$$ with $\tau(q\otimes p) = (q\otimes 1)\star p$. By applying Theorem \ref{teoapp:main-1} to $L_0=0$, $U_0=A$ and $M_0=A$ we deduce that $\tau$ is an isomorphism.
 \medskip
 
 For any $q\in Q$ and any $a\in A$ we have $\tau(q\otimes a)= \tau((q\otimes 1)\pcirc a)= (q\otimes 1)\star a = q\otimes a$, and so $\tau^2(q\otimes a)= q\otimes a$.
 \medskip
 
Let $(U_r)_{r\geq 0}$ be the canonical filtration on $U$: $ U_r$ is the left (or right) $A$-module generated by $A$ and $L^r$. Let $(\Gamma_r)_{r\geq 0}$ be the filtration on  $Q\otimes_A U$ induced by $(Q\otimes_A U_r)_{r\geq 0}$. It is clear that $\Gamma_r \pcirc U_s \subset \Gamma_{r+s}$.
\smallskip

It is not difficult to prove that $\tau(\xi \star p) = \tau(\xi)\pcirc p$ by induction on $\deg \xi + \deg p$.
\inhibe
Let us prove that $\tau(\xi \star p) = \tau(\xi)\pcirc p$ by induction on $\deg \xi + \deg p$. If $\deg \xi + \deg p=0$ then $\deg \xi = \deg p =0$ and $\xi = q\otimes 1$, $p=a\in A$: $\tau(\xi \star p) = \tau((q\otimes 1)\star a)=\tau (q\otimes a)= (q\otimes 1)\star a = q\otimes a = (q\otimes 1)\pcirc a = \tau(\xi)\pcirc a$.
 \smallskip
 
Let $r>0$ be an integer and assume that $\tau(\xi' \star p') = \tau(\xi')\pcirc p'$ whenever that $\deg \xi' + \deg p' < r$. Let us take elements $\xi\in Q\otimes_A U$, $p\in U$ with $\deg \xi + \deg p=r$. If $\deg \xi =0$, then $\xi = q\otimes 1$. We can suppose that $p=\lambda p' $ with $\deg p' = r-1$ and $\lambda\in L$. We have
\begin{eqnarray*} &\xi \star p = (\xi \star \lambda)\star p'= ((q\lambda)\otimes 1 - q\otimes \lambda)\star p' =&\\
&((q\lambda)\otimes 1)\star p' - ((q\otimes 1)\pcirc \lambda)\star p' =
((q\lambda)\otimes 1)\star p'  -  ((q\otimes 1)\star p')\pcirc \lambda,
\end{eqnarray*}
and so 
\begin{eqnarray*} &\tau(\xi \star p) =\cdots = \tau(((q\lambda)\otimes 1)\star p')  -  \tau(((q\otimes 1)\star p')\pcirc \lambda)=& \\
& \tau((q\lambda)\otimes 1)\pcirc p' - \tau((q\otimes 1)\star p')\star \lambda=\tau((q\lambda)\otimes 1)\pcirc p' - (\tau(q\otimes 1)\pcirc p')\star \lambda=&\\
&  ((q\lambda)\otimes 1)\pcirc p' - ((q\otimes 1)\pcirc p')\star \lambda = (q\lambda)\otimes p' - (q\otimes p')\star \lambda=&\\
& (q\lambda)\otimes p'  - [ (q\lambda)\otimes p'- q\otimes (\lambda p')] = q\otimes p = \tau(\xi) \pcirc p.&
\end{eqnarray*}
If $\deg \xi > 0$ we can suppose $\xi = \xi' \pcirc \lambda$ with $\deg \xi' = \deg \xi -1$, $\lambda\in L$:
\begin{eqnarray*} & \tau (\xi \star p) = \tau((\xi' \pcirc \lambda)\star p) = \tau((\xi'\star p)\pcirc \lambda) = \tau(\xi'\star p) \star \lambda = (\tau(\xi')\pcirc p)\star \lambda=&\\
& (\tau(\xi')\star \lambda)\pcirc p = \tau(\xi'\pcirc \lambda)\pcirc p = \tau(\xi) \pcirc p.&
\end{eqnarray*}
\endinhibe
The involutivity of $\tau$ comes from the fact that $\tau^2(q\otimes p) = \tau((q\otimes 1)\star q) = (q\otimes 1)\pcirc p = q\otimes p$.
\end{proof}

\begin{corollary}  \label{coro2:main-1}
If $M$ is a free left $U$-module and $Q$ is a right $U$-module which is free over $A$, then $Q\otimes_A M$ is a free right $U$-module.
\end{corollary}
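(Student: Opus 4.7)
The plan is to reduce to the case $M = U$ and then exploit Corollary \ref{coro1:main-1}, which identifies the two natural right $U$-module structures on $Q\otimes_A U$.

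First, write $M \simeq \bigoplus_{i\in I} U$ as a left $U$-module using the given basis. Since $\otimes_A$ commutes with direct sums, there is a natural identification
$$Q \otimes_A M \simeq \bigoplus_{i\in I} (Q \otimes_A U)$$
as right $U$-modules, where on each summand the right $U$-action is the one given by the general internal-operations construction of \ref{internal-oper} applied to the right $U$-module $Q$ and the left $U$-module $U$; this is precisely the $\star$-structure considered just before Corollary \ref{coro1:main-1}. It therefore suffices to show that $(Q\otimes_A U,\star)$ is a free right $U$-module.

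Next, I would invoke Corollary \ref{coro1:main-1} to get a right $U$-linear isomorphism
$$\tau:(Q\otimes_A U,\pcirc) \xrightarrow{\;\simeq\;} (Q\otimes_A U,\star).$$
So the question reduces to showing that $(Q\otimes_A U,\pcirc)$ is a free right $U$-module. But the $\pcirc$-structure is, by definition, the one obtained by scalar extension along $A\to U$ from the $A$-module structure on $Q$: $(q\otimes p)\pcirc p' = q\otimes (pp')$. If $\{q_j\}_{j\in J}$ is an $A$-basis of the free $A$-module $Q$, then every element of $Q\otimes_A U$ has a unique expression $\sum_j q_j\otimes p_j = \sum_j (q_j\otimes 1)\pcirc p_j$, so $\{q_j\otimes 1\}_{j\in J}$ is a right $U$-basis of $(Q\otimes_A U,\pcirc)$.

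Combining these observations, $\{\tau(q_j\otimes 1)\}_{j\in J} = \{q_j\otimes 1\}_{j\in J}$ is a right $U$-basis of $(Q\otimes_A U,\star)$, and doubly indexing over $I$ gives a right $U$-basis of $Q\otimes_A M$. The only delicate point in the argument is the identification of the $\star$-structure on $Q\otimes_A U$ with the transport of the right $U$-module structure from $U$ to $Q\otimes_A U$ along the isomorphism $Q\otimes_A U \simeq Q\otimes_A M\vert_{M=U}$; but this is tautological from the definition in \ref{internal-oper} once one unwinds both sides, so no real obstacle is expected.
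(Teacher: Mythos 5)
Your proposal is correct and follows essentially the same route as the paper's proof: decompose $M\simeq\bigoplus_I U$, reduce to $(Q\otimes_A U,\star)$, transport to $(Q\otimes_A U,\pcirc)$ via Corollary (A.11), and observe that the $\pcirc$-structure is manifestly free because $Q$ is free over $A$. You have merely unwound the steps that the paper compresses into one sentence.
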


\begin{proof} Since $M$ is free, $M \simeq \oplus_{i\in I} U$ for some set $I$, and $Q\otimes_A M \simeq \oplus_{i\in I} (Q\otimes_A U,\star)$, but $(Q\otimes_A U,\star) \simeq (Q\otimes_A U,\pcirc)$ is a free right $U$-module because $Q$ is a free $A$-module.
\end{proof}

\begin{corollary}  \label{coro3:main-1}
Assume that $Q$ is a right $U$-module which is free over $A$. Then, for any upper bounded complex $M_0^\sbullet$ of left $U_0$-modules there is a canonical isomorphism 
$$\tau: [Q\otimes_A M^\sbullet_0]\Lotimes_{U_0} U \to
Q\otimes_A[U\Lotimes_{U_0} M^\sbullet_0]$$
in the upper bounded derived category of right $U$-modules.
\end{corollary}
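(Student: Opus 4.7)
The plan is to reduce the derived statement to Theorem \ref{teoapp:main-1} by replacing $M_0^\sbullet$ with a suitable resolution and tracking flatness. Concretely, I would first choose a quasi-isomorphism $P^\sbullet \to M_0^\sbullet$ where $P^\sbullet$ is an upper bounded complex of free left $U_0$-modules (this exists since $M_0^\sbullet$ is upper bounded). Then Theorem \ref{teoapp:main-1}, applied degree by degree and checked to be functorial in $M_0$, yields an isomorphism of complexes of right $U$-modules
\begin{equation*}
\tau_{P^\sbullet}:\ [Q\otimes_A P^\sbullet]\otimes_{U_0} U \xrightarrow{\ \simeq\ } Q\otimes_A [U\otimes_{U_0} P^\sbullet].
\end{equation*}

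Next I would identify each side with the corresponding derived functor. For the right hand side, since each $P^i$ is a free left $U_0$-module, the complex $U\otimes_{U_0} P^\sbullet$ represents $U\Lotimes_{U_0} M_0^\sbullet$; and since $Q$ is free over $A$, the functor $Q\otimes_A -$ is exact, so applying it gives a representative of $Q\otimes_A [U\Lotimes_{U_0} M_0^\sbullet]$. For the left hand side, the key observation is Corollary \ref{coro2:main-1} applied to the Lie-Rinehart subalgebra $L_0$: since $Q$ is a free $A$-module and each $P^i$ is a free left $U_0$-module, each $Q\otimes_A P^i$ is a free (hence flat) right $U_0$-module. Combined with the fact that $Q\otimes_A -$ preserves quasi-isomorphisms (flatness of $Q$ over $A$), the complex $Q\otimes_A P^\sbullet$ is a flat right $U_0$-resolution of $Q\otimes_A M_0^\sbullet$, so $[Q\otimes_A P^\sbullet]\otimes_{U_0} U$ represents $[Q\otimes_A M_0^\sbullet]\Lotimes_{U_0} U$.

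Combining these identifications with $\tau_{P^\sbullet}$ yields the desired isomorphism in the derived category.

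The main obstacle is canonicity: one must check that the resulting arrow in the derived category is independent of the choice of free resolution $P^\sbullet$. This I would settle by the usual argument: any two free resolutions of $M_0^\sbullet$ are related by a chain homotopy equivalence unique up to homotopy, and the naturality of $\tau$ in Theorem \ref{teoapp:main-1} (which follows from its construction via $\sigma(q\otimes m) = q\otimes (1\otimes m)$) implies that the induced arrows in the derived category agree. A small additional verification, straightforward from the explicit formula, is that $\tau_{P^\sbullet}$ is compatible with the right $U$-actions levelwise, so it defines a morphism of complexes and not merely a degreewise family.
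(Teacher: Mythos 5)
Your argument is correct and follows essentially the same route as the paper: take a free resolution $P^\sbullet \to M_0^\sbullet$ of left $U_0$-modules, use flatness of $Q$ over $A$ to see that $Q\otimes_A P^\sbullet$ still resolves $Q\otimes_A M_0^\sbullet$, apply Corollary \ref{coro2:main-1} to get that this resolution is by free right $U_0$-modules, and then invoke Theorem \ref{teoapp:main-1} levelwise. The paper is simply more terse about the functoriality/canonicity check that you spell out at the end.
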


\begin{proof} Let $L^\sbullet \to M_0^\sbullet$ be a free resolution. Since $Q$ is a free $A$-module, we have an isomorphism $Q \otimes_A L^\sbullet \xrightarrow{\simeq} Q\otimes_A M^\sbullet_0$ in the upper bounded derived category of right $U_0$-modules, and we can apply Corollary \ref{coro2:main-1} to deduce that $Q \otimes_A L^\sbullet$ is an upper bounded complex of free right $U_0$-modules. So we have 
$$ [Q\otimes_A M^\sbullet_0]\Lotimes_{U_0} U \simeq [Q\otimes_A L^\sbullet]\Lotimes_{U_0} U \simeq [Q\otimes_A L^\sbullet]\otimes_{U_0} U$$
and 
$ Q\otimes_A[U\Lotimes_{U_0} M_0] \simeq Q\otimes_A[U\otimes_{U_0} L^\sbullet]$.
To conclude, we apply Theorem \ref{teoapp:main-1}.
\end{proof}
\bigskip

Let $M$ be a left $U$-module and $N_0$ an $A$-module. The module $U \otimes_A N_0$ is, by scalar extension, a left $U$-module:
$ p(q \otimes n) = (pq) \otimes n$, $p,q\in U, n\in N$. Let us denote by $[U \otimes_A N_0]  \otimes'_A M$ the tensor product with respect to the $A$-module structure on $U \otimes_A N_0$ induced by its left $U$-module structure:
$$ a ( [q\otimes n] \otimes' m) = ( a[q\otimes n] ) \otimes' m = [(aq)\otimes n] \otimes' m =
 [q\otimes n] \otimes' (am)$$
for $a\in A, q\in U, n\in N_0, m\in M$. It has a left $U$-module structure given by \ref{internal-oper}:
$$ \lambda ( [q\otimes n] \otimes' m) = [(\lambda q)\otimes n] \otimes' m + [q\otimes n] \otimes' (\lambda m),\quad \lambda \in L, q\in U, n\in N_0, m\in M.$$
The map 
$$\sigma':
N_0\otimes_A M \to [U \otimes_A N_0]  \otimes'_A M,\quad \sigma'(n\otimes
m) =   [1\otimes n] \otimes' m$$
is $A$-linear and induces a $U$-linear map
$\tau': U\otimes_A [N_0\otimes_A M] \to
 [U \otimes_A N_0] \otimes'_A  M
$ with $\tau'(p \otimes [n\otimes m]) = p\cdot \sigma'(n\otimes m)$.

\begin{proposition} \label{prop:nuevomain-2} Under the above hypotheses, the map $$\tau':U\otimes_A [N_0\otimes_A M] \to
 [U \otimes_A N_0] \otimes'_A M$$ is an isomorphism of left $U$-modules. 
\end{proposition}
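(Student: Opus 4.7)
My plan is to show that $\tau'$ is an isomorphism by proving that, for every left $U$-module $M'$, the pullback $(\tau')^*\colon\Hom_U([U\otimes_A N_0]\otimes'_A M, M') \to \Hom_U(U\otimes_A[N_0\otimes_A M], M')$ is a bijection, and then invoking the Yoneda lemma. This approach avoids writing down a two-sided inverse directly: the naive formula $[q\otimes n]\otimes' m \mapsto q\otimes[n\otimes m]$ is a well-defined $A$-linear map, but it does \emph{not} invert $\tau'$, because the nontrivial $U$-action on $\otimes'_A$ produces cross terms whenever one pulls an element of $L$ through $\sigma'$.

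For the domain the standard scalar-extension adjunction yields
$$\Hom_U\bigl(U\otimes_A[N_0\otimes_A M],\, M'\bigr)\;\simeq\;\Hom_A(N_0\otimes_A M,\,M'),$$
where $f$ corresponds to $[n\otimes m \mapsto f(1\otimes[n\otimes m])]$. For the codomain I would compose three known isomorphisms. First, Lemma \ref{lema:tecnico-1} applied with $M_1=U\otimes_A N_0$ (whose left $U$-module structure is by scalar extension, and whose $A$-module structure entering $\otimes'_A$ is precisely the one induced by that $U$-structure, so the hypothesis of \ref{internal-oper} is met verbatim) gives
$$\Hom_U\bigl([U\otimes_A N_0]\otimes'_A M,\,M'\bigr)\;\simeq\;\Hom_U\bigl(U\otimes_A N_0,\,\Hom_A(M,M')\bigr).$$
Second, the scalar-extension adjunction again identifies the right-hand side with $\Hom_A(N_0,\Hom_A(M,M'))$. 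Third, the ordinary tensor--hom adjunction over the commutative ring $A$ identifies this with $\Hom_A(N_0\otimes_A M, M')$. All three isomorphisms are natural in $M'$.

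The remaining step is a short diagram chase identifying the composite with $(\tau')^*$. Tracing a given $g:[U\otimes_A N_0]\otimes'_A M\to M'$, one obtains successively $\hat g(q\otimes n)(m)=g([q\otimes n]\otimes' m)$, then $h(n)(m)=g([1\otimes n]\otimes' m)=g(\sigma'(n\otimes m))$, then $\tilde h(n\otimes m)=g(\sigma'(n\otimes m))$, and finally, by scalar-extension adjunction,
$$f(p\otimes[n\otimes m])=p\cdot \tilde h(n\otimes m)=p\cdot g(\sigma'(n\otimes m))=g\bigl(p\cdot\sigma'(n\otimes m)\bigr)=g\bigl(\tau'(p\otimes[n\otimes m])\bigr),$$
using $U$-linearity of $g$ and the definition of $\tau'$. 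So the composite equals $(\tau')^*$, and Yoneda gives the result.

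The main obstacle is the bookkeeping in this final identification: one must keep straight the two different $A$-module structures on $U\otimes_A N_0$ (the canonical one used to form the tensor product, and the ``prime'' one inherited from the left $U$-action and used to form $\otimes'_A$) and verify that Lemma \ref{lema:tecnico-1} is being applied with respect to the latter. Once that is pinned down, the chase and the application of Yoneda are purely mechanical.
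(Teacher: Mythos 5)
Your proof is correct and is exactly what the paper intends: the paper's proof says to repeat the proof of Theorem \ref{teoapp:main-1} verbatim with Lemma \ref{lema:tecnico-1} in place of Lemma \ref{lema:tecnico-2}, which is precisely your factorization of $(\tau')^*$ through the scalar-extension and $\Hom$--$\otimes$ adjunctions, followed by the element chase and Yoneda. One small correction to your side remark: the naive formula $[q\otimes n]\otimes' m\mapsto q\otimes[n\otimes m]$ is not even well-defined (rather than merely a non-inverse), since the $\otimes'_A$ relation $[(aq)\otimes n]\otimes' m=[q\otimes n]\otimes'(am)$ would force $(aq)\otimes[n\otimes m]=(qa)\otimes[n\otimes m]$ in the target, which fails when $aq\neq qa$ in $U$; as you wisely avoid any explicit inverse, this does not affect your argument.
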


\begin{proof} It can be done in a completely similar way to the proof of theorem \ref{teoapp:main-1}, but we use lemma \ref{lema:tecnico-1} instead of lemma \ref{lema:tecnico-2}.
\end{proof}

In the special case where $N_0=A$, we obtain a reacher statement. The left $U$-modules $U\otimes_A M$ and $U\otimes'_A M$ are endowed with compatible right $U$-module structures \ref{internal-oper}:
\begin{eqnarray*}
&(q\otimes m) a  = q\otimes (am) = (qa)\otimes m, \quad  (q\otimes m)\lambda = (q\lambda)\otimes m - q\otimes (\lambda m),&\\
& (q\otimes' m) p = (qp)\otimes' m\quad \text{(remember that $(aq)\otimes' m = q\otimes' (am)$)}&
\end{eqnarray*}
for $a\in A, \lambda \in L, p, q\in U, m\in M$.

\begin{proposition}  \label{prop:nuevomain-3}
The left $U$-linear map $\tau': U\otimes_A M \to U \otimes'_A M$ defined by $\tau'(p\otimes m) = p\cdot (1\otimes' m)$  is also right $U$-linear and so it is an isomorphism of $(U;U)$-bimodules.
\end{proposition}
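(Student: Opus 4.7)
The plan is to deduce left $U$-linearity and bijectivity of $\tau'$ directly from the preceding Proposition~\ref{prop:nuevomain-2} applied in the special case $N_0=A$. Under the canonical identifications $A\otimes_A M \simeq M$ and $U\otimes_A A \simeq U$, the map
$$\tau': U\otimes_A[A\otimes_A M] \longrightarrow [U\otimes_A A]\otimes'_A M$$
of Proposition~\ref{prop:nuevomain-2} becomes exactly the map $p\otimes m \mapsto p\cdot(1\otimes' m)$ of the present statement. Thus the only real work is to verify that $\tau'$ is also right $U$-linear.

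First I would spell out, by short direct computations from the defining formulas in \ref{internal-oper}, that both $U\otimes_A M$ and $U\otimes'_A M$ genuinely are $(U;U)$-bimodules, i.e.\ that the given left and right actions commute. On $U\otimes_A M$ this amounts to checking that left multiplication by $p\in U$ commutes with $a\in A$ and with $\lambda\in L$ acting on the right via the Leibniz-type formula $(q\otimes m)\lambda=(q\lambda)\otimes m - q\otimes(\lambda m)$; on $U\otimes'_A M$ the analogous check is even simpler since the right action is just $(q\otimes' m)\cdot p=(qp)\otimes' m$. These verifications are routine.

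Next I would prove $\tau'(\xi\cdot p)=\tau'(\xi)\cdot p$ for all $\xi\in U\otimes_A M$ and $p\in U$ by induction on the filtration degree of $p$, using that $U$ is generated as a ring by $A$ and $L$. For $\xi=q\otimes m$ and $p=a\in A$ both sides reduce to $q\cdot(a\otimes' m)$. For $p=\lambda\in L$ one computes
$$\tau'((q\otimes m)\cdot\lambda)=\tau'((q\lambda)\otimes m-q\otimes(\lambda m))=(q\lambda)\cdot(1\otimes' m)-q\cdot(1\otimes'(\lambda m)),$$
while, using the defining left action on $U\otimes'_A M$, one has $\lambda\otimes' m=\lambda\cdot(1\otimes' m)-1\otimes'(\lambda m)$, hence
$$\tau'(q\otimes m)\cdot\lambda=(q\cdot(1\otimes' m))\cdot\lambda=q\cdot(\lambda\otimes' m)=(q\lambda)\cdot(1\otimes' m)-q\cdot(1\otimes'(\lambda m)).$$
So the two sides agree. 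For the induction step, write an element of $U_{r+1}$ as a sum of terms $\lambda u$ with $\lambda\in L$ and $u\in U_r$; by associativity of the original right $U$-action and two successive applications of the induction hypothesis,
$$\tau'(\xi\cdot(\lambda u))=\tau'((\xi\cdot\lambda)\cdot u)=\tau'(\xi\cdot\lambda)\cdot u=(\tau'(\xi)\cdot\lambda)\cdot u=\tau'(\xi)\cdot(\lambda u).$$

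The only subtle point, and the thing I would present most carefully, is the base case $p=\lambda\in L$: here both modules carry ``Leibniz cross terms'' in their respective $L$-actions (one on the right, one on the left), and the proposition is true precisely because these cross terms cancel against each other under $\tau'$. Once this single identity is checked, the rest of the argument is purely formal: the inductive step is automatic from associativity of the right action, and the entire statement then follows from Proposition~\ref{prop:nuevomain-2} together with this right-linearity.
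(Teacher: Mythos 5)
Your proposal is correct and follows essentially the same route as the paper: the paper's own proof is precisely the two-sentence remark that left $U$-linearity and bijectivity come from Proposition~\ref{prop:nuevomain-2} with $N_0=A$, while right $U$-linearity ``can be proven in a similar way to Corollary~\ref{coro1:main-1}'', i.e.\ by the same degree-wise induction you carry out. You have merely filled in the details of that sketch: your base-case computation for $p=\lambda\in L$, checking that the two Leibniz cross terms cancel under $\tau'$, is exactly the content hidden behind the paper's reference, and your inductive step from $U_r$ to $U_{r+1}=A+L\cdot U_r$ via associativity of the right action is the same mechanism used in the proof of Corollary~\ref{coro1:main-1}.
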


\begin{proof} The fact that $\tau'$ is an isomorphism of left $U$-modules comes from Proposition \ref{prop:nuevomain-2}. The fact that $\tau'$ is right $U$-linear can be proven in a similar way to Corollary \ref{coro1:main-1}. 
\end{proof}   

\begin{remark} \label{variants}Proposition \ref{prop:nuevomain-2} is a particular case of a second associativity law $U\otimes_{U_0} [N_0\otimes_A M] \simeq [U \otimes_{U_0} N_0] \otimes'_A M $ with $L_0=0$ and $U_0=A$. Actually, there is a third associativity law $\left[Q_0 \otimes_A M\right] \otimes_{U_0} U \simeq \left[Q_0 \otimes_{U_0} U\right] \otimes_A M$. Both laws and their corresponding corollaries can be proved in a similar way to Theorem \ref{teoapp:main-1}. For that one needs some variants of Lemmas \ref{lema:varios-isos} -- \ref{lema:tecnico-2}. We do not need these results in this paper and we skip their proof.
\end{remark}
\bigskip

From now on, we assume that $L$ is a Lie-Rinehart algebra over $(k,A)$ which is a free $A$-module of rank $d$. 
By the Poincar\'e-Birkhoff-Witt theorem \cite[th. 3.1]{rine-63} we know that $\Sim L \simeq \gr U$. 
\medskip

Let us denote $\Omega_L = L^* = \Hom_A(L,A)$, $\Omega_L^n = \bigwedge^n \Omega_L \equiv \Hom_A(\bigwedge^n L,A)$ for $n=0,\dots, d$, and $\omega_L = \Omega_L^d$.
\medskip

\numero \label{nume:SP} For each left $U$-module $E$ we define the {\em Cartan-Eilenberg-Chevalley-Rinehart-Spencer} $\SP^\sbullet_L(E)$ complex associated with $E$ 
as \cite[1.1.2]{calde_nar_lct_ilc}:
$$\SP^{-n}_{L}(E) = U\otimes_A
\stackrel{n}{\bigwedge}L \otimes_A E,\quad n=0,\dots, d$$ where the left $U$-module structure comes exclusively from the first factor $U$ of the tensor product,  
the differential 
 $d^{-n}: \SP^{-n}_{L}(E) \to \SP^{-n+1}_{L}(E)$ is given by:
\begin{eqnarray*}
&\displaystyle d^{{-1}}(P\otimes\lambda \otimes e ) = (P\lambda)\otimes e - P\otimes (\lambda e), &\\
&\displaystyle d^{{-n}}( P\otimes(\lambda_1\wedge\cdots\wedge\lambda_n)\otimes e)
 =\sum_{i=1}^n
(-1)^{i-1} P\lambda_i\otimes(\lambda_1\wedge\cdots\widehat{\lambda_i}
\cdots\wedge\lambda_n)\otimes e&\\ 
&\displaystyle  - \sum_{i=1}^n
(-1)^{i-1} P\otimes(\lambda_1\wedge\cdots\widehat{\lambda_i}
\cdots\wedge\lambda_n)\otimes (\lambda_i e)&\\
&\displaystyle + \sum_{1\leq i<j\leq n}
(-1)^{i+j}P\otimes([\lambda_i,\lambda_j]\wedge\lambda_1\wedge\cdots
\widehat{\lambda_i}\cdots\widehat{\lambda_j}
       \cdots\wedge\lambda_n)\otimes e, \  2\leq n\leq d,&
\end{eqnarray*}
and the augmentation  is
$ P\otimes e \in U\otimes_A E = \SP^{0}_{L}(E)\mapsto d^0(P\otimes e):= P\cdot e\in E$.
\medskip

In the case $E=A$ the above complex coincides with the complex defined in \cite[\S 4]{rine-63}, which will be simply denoted by $\SP^\sbullet_L$. 
\medskip

Let us denote by $\widetilde{\SP^\sbullet_L}$ and $\widetilde{\SP^\sbullet_L}(E)$ the augmented complexes $\SP^\sbullet_L \to A$ and $\SP^\sbullet_L(E) \to E$ respectively.
\medskip

We quote the following result (cf. \cite[Proposition 1.7]{calde_nar_lct_ilc}):

\begin{proposition} \label{prop:SpE} Under the above hypotheses, if $E$ is free over $A$, then $\SP^\sbullet_L(E)$ is a free resolution of the $U$-module $E$.
\end{proposition}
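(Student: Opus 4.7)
\noindent\textsc{Proof plan.}  The plan is to reduce the assertion to the well-known fact that the Koszul complex on the polynomial ring $\Sim L = A[t_1,\dots,t_d]$ with respect to its variables is a free resolution of $A$, and then tensor with $E$, which is harmless since $E$ is flat over $A$.  Throughout I use the canonical PBW filtration $(U_r)_{r\geq 0}$ on $U$, and the fact (Poincar\'e--Birkhoff--Witt) that $\gr U \simeq \Sim L$.

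\medskip

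\noindent\emph{Step 1: freeness of the terms.}  Since $L$ is free of rank $d$ over $A$, each $\bigwedge^n L$ is free of rank $\binom{d}{n}$ over $A$.  Because $E$ is free over $A$ by hypothesis, $\bigwedge^n L \otimes_A E$ is a free $A$-module, and therefore
$$\SP^{-n}_L(E) \;=\; U\otimes_A\bigl(\textstyle\bigwedge^n L\otimes_A E\bigr)$$
is a free left $U$-module.  So the complex is made of free $U$-modules; only acyclicity (with augmentation $E$) remains to be proved.

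\medskip

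\noindent\emph{Step 2: the PBW filtration and its associated graded.}  Define
$$F_s\SP^{-n}_L(E)\;:=\;U_{s-n}\otimes_A\bigwedge^n L\otimes_A E,\qquad F_sE:=E\text{ if }s\geq 0,\ F_sE:=0\text{ otherwise.}$$
Inspecting the three sums defining $d^{-n}$, the first sum raises the $U$-order by $1$ while dropping the wedge degree by $1$ (so total filtration degree $s$ is preserved); the second and third sums keep $P\in U_{s-n}$ (so they land in $U_{s-n}\subset U_{s-n+1}$, i.e.\ in $F_s$).  Hence $d$ respects $F_\sbullet$.  The filtration is exhaustive and satisfies $F_s\SP^{-n}_L(E)=0$ for $s<n$, so it is bounded below in each cohomological degree; the augmentation $d^0:U_{s}\otimes_A E\to E$ is compatible.

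\medskip

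\noindent\emph{Step 3: the associated graded is Koszul.}  In $\gr^s$, the first sum contributes
$$\sigma(P)\,\sigma(\lambda_i)\in\Sim^{s-n+1}L,$$
while the second and third sums involve $P\in U_{s-n}$, whose class in $\Sim^{s-n+1}L=U_{s-n+1}/U_{s-n}$ vanishes.  Hence
$$\gr\widetilde{\SP^{\sbullet}_L}(E)\;\simeq\;\bigl(\Sim L\otimes_A\textstyle\bigwedge^{\sbullet}L\to A\bigr)\otimes_A E,$$
where the first factor is the Koszul complex on $\Sim L\simeq A[t_1,\dots,t_d]$ associated with the regular sequence $t_1,\dots,t_d$, augmented to $A$.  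This Koszul complex is exact, and since $E$ is free (hence flat) over $A$, tensoring with $E$ preserves exactness.  Therefore $\gr\widetilde{\SP^{\sbullet}_L}(E)$ is acyclic.

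\medskip

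\noindent\emph{Step 4: lift to the unfiltered complex.}  The filtered complex $\widetilde{\SP^{\sbullet}_L}(E)$ is exhaustive and bounded below in each degree, so the spectral sequence of the filtered complex converges and already degenerates at $E_1$ as the graded is acyclic; consequently $\widetilde{\SP^{\sbullet}_L}(E)$ itself is acyclic.  Equivalently, any cycle of filtration degree $s$ can be written as a boundary modulo $F_{s-1}$, and one iterates finitely many times since $F_{s-1}$ eventually vanishes on the given degree.  Combined with Step~1, this shows $\SP^{\sbullet}_L(E)$ is a free resolution of $E$, as desired.

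\medskip

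\noindent\emph{Main obstacle.}  The only point requiring genuine care is Step~3: one must verify that the ``anomalous'' second and third sums of the Spencer differential, which encode the non-commutativity of $U$ and the Lie bracket on $L$, fall one filtration step below the leading term and thus vanish in the associated graded.  Once this is confirmed, the argument is a standard Koszul-plus-filtered-complex routine.
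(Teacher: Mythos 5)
Your proof is correct. The paper itself gives no proof of this proposition---it simply quotes \cite[Proposition 1.7]{calde_nar_lct_ilc}---so there is nothing to compare against; your argument (PBW filtration with $F_s\SP^{-n}_L(E)=U_{s-n}\otimes_A\bigwedge^n L\otimes_A E$, identification of the associated graded with the Koszul complex on $\Sim L\simeq A[t_1,\dots,t_d]$ tensored with the flat module $E$, then the bounded-below/exhaustive spectral-sequence argument) is precisely the standard proof underlying the cited reference, and you correctly identify that the only delicate point is that the second and third sums of the Spencer differential drop in filtration and so vanish in $\gr$.
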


For any left $U$-module $M$, the complex $\Hom_U(\SP^\sbullet_L,M)$ is canonically isomorphic to the ``de Rham'' complex $\Omega_L^\sbullet(M)$, where $\Omega^n_L(M):= \Hom_A(\bigwedge^n L,M) \equiv \Hom_A(\bigwedge^n L,A) \otimes_A M =\Omega_L^n \otimes_A M$, $n=0,\dots, d$, and the differential $d:\Omega^{n-1}_L(M) \to \Omega^n_L(M)$ is given by
\begin{eqnarray*}
&\displaystyle  (d \alpha) (\lambda_1\wedge\cdots\wedge\lambda_n))
 =\sum_{i=1}^n
(-1)^{i-1} \lambda_i \alpha(\lambda_1\wedge\cdots\widehat{\lambda_i}
\cdots\wedge\lambda_n)&\\ 
&\displaystyle + \sum_{1\leq i<j\leq n}
(-1)^{i+j} \alpha([\lambda_i,\lambda_j]\wedge\lambda_1\wedge\cdots
\widehat{\lambda_i}\cdots\widehat{\lambda_j}
       \cdots\wedge\lambda_n).&
\end{eqnarray*}

For any $\lambda\in L$ and any $n$ we have the Lie derivative $L_\lambda: \Omega^n_L(M) \to \Omega^n_L(M)$ 
(cf. \cite{rine-63}, prop. 6.3) defined by 
$$ (L_\lambda \alpha)(\lambda_1\wedge\cdots\wedge\lambda_n) = \lambda\cdot \alpha(\lambda_1\wedge\cdots\wedge\lambda_n) - \sum_{i=1}^n \alpha( \lambda_1\wedge\cdots\wedge [\lambda,\lambda_i]\wedge \cdots \wedge\lambda_n).$$

The proof of the following proposition can be found in \cite{MR2000f:53109}, prop. 2.8 and th. 2.10 (see also \cite[Proposition 3.1]{calde_nar_fourier}). It is a generalization of the well known corresponding results in $D$-module theory.

\begin{proposition} \label{prop:omega}  Under the above hypotheses, the following properties hold:
\begin{enumerate}
\item[(1)] The action $(\alpha,\lambda)\in \omega_L \times L \mapsto -L_\lambda \alpha \in \omega_L$ defines a right $U$-module structure on $\omega_L$.
\item[(2)] The complex $\Omega^\sbullet_L(U)$ can be augmented through the map $\alpha \otimes p \in \Omega_L^d \otimes_A U \mapsto \alpha p \in \omega_L=\Omega_L^d$ and it becomes a free resolution of the right $U$-module $\omega_L$.
\end{enumerate}
\end{proposition}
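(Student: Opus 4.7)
Plan:

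For part (1), the verification that $\alpha \cdot \lambda := -L_\lambda \alpha$ defines a right $L$-module structure on $\omega_L$ reduces to the three defining axioms. Cartan's magic formula $L_\lambda = d\iota_\lambda + \iota_\lambda d$ combined with $d\alpha = 0$ for $\alpha \in \omega_L = \Omega^d_L$ (forced by degree) yields $L_\lambda \alpha = d(\iota_\lambda \alpha)$, and from
\[ L_{a\lambda} = d\iota_{a\lambda} + \iota_{a\lambda} d = da \wedge \iota_\lambda + a L_\lambda, \]
the two $A$-linearity axioms both follow once one checks the local identity $da \wedge \iota_\lambda \alpha = \lambda(a) \alpha$ for top forms, which is a direct computation in a basis $e_1,\ldots,e_d$ of $L$ and its dual basis. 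The bracket axiom follows from the standard Lie-algebra identity $[L_\lambda, L_{\lambda'}] = L_{[\lambda,\lambda']}$. The right $L$-module structure then extends uniquely to a right $U$-module structure by the universal property of the enveloping algebra.

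For part (2), each $\Omega^n_L(U) = \Omega^n_L \otimes_A U$ is a free right $U$-module, as a finite direct sum of copies of $U$ (since $\Omega^n_L$ is free of finite rank over $A$), and right $U$-linearity of the de Rham differential as well as of the augmentation $\alpha \otimes p \mapsto \alpha p$ is immediate because the differential involves only left multiplication of $U$ on itself, which commutes with right multiplication. The remaining task is to establish exactness of the augmented complex. My plan is to identify $\Omega^\sbullet_L(U)$, up to the shift by $d$, with the complex $\omega_L \otimes_A \SP^\sbullet_L$ obtained by applying the exact functor $M \mapsto \omega_L \otimes_A M$ (from left to right $U$-modules, with right action as in \ref{internal-oper}) to the Spencer resolution $\SP^\sbullet_L \to A$ of Proposition \ref{prop:SpE}. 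Exactness of this functor---guaranteed by flatness of the rank-one free $A$-module $\omega_L$---together with Corollary \ref{coro2:main-1} (sending free left $U$-modules to free right $U$-modules) then gives that $\omega_L \otimes_A \SP^\sbullet_L \to \omega_L$ is a free right $U$-module resolution.

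The required identification in each degree,
\[ \omega_L \otimes_A (U \otimes_A \textstyle\bigwedge^n L) \xrightarrow{\sim} \Omega^{d-n}_L(U), \]
is assembled from (i) the $(U;U)$-bimodule isomorphism of Corollary \ref{coro1:main-1} converting the ``star'' right action on $\omega_L \otimes_A U$ into right multiplication on the $U$ factor, (ii) a permutation of tensor factors, and (iii) the contraction isomorphism $\omega_L \otimes_A \bigwedge^n L \xrightarrow{\sim} \Omega^{d-n}_L$, $\alpha \otimes v \mapsto \iota_v \alpha$, available because $\omega_L$ is free of rank one and $L$ is free of rank $d$. A routine check shows these identifications intertwine the Spencer and de Rham differentials and match the two augmentations to $\omega_L$. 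The main technical difficulty---aside from the bookkeeping of module structures and signs through the tensor rearrangements---is precisely this compatibility check; alternatively one can bypass it via a spectral-sequence argument with respect to the order filtration on $U$, reducing the exactness of $\Omega^\sbullet_L(U) \to \omega_L$ to that of the Koszul co-complex of $\gr U = \Sim_A L$ for the regular sequence given by a basis of $L$.
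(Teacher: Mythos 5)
The paper does not actually prove this proposition: it refers the reader to Huebschmann's \emph{Duality for Lie-Rinehart algebras and the modular class} (prop.\ 2.8 and th.\ 2.10) and to \cite[Proposition 3.1]{calde_nar_fourier}, remarking that it is the standard $\DD$-module argument transplanted to the Lie--Rinehart setting. Your proposal supplies a correct self-contained proof along exactly those lines.

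For (1), the Cartan-formula route works cleanly: since $\omega_L$ sits in top degree, $L_\lambda\alpha=d\iota_\lambda\alpha$ and $L_{a\lambda}\alpha = da\wedge\iota_\lambda\alpha + aL_\lambda\alpha = \lambda(a)\alpha + aL_\lambda\alpha$, from which the three right-$L$-module axioms follow by direct substitution (and indeed $L_{a\lambda}\alpha = L_\lambda(a\alpha)$ for a top form). One could also verify the axioms straight from the displayed formula for $L_\lambda$ without invoking $\iota$ and $d$, but both are standard and correct; the passage from a right $L$-module to a right $U$-module is the universal property of $\U(L)$ exactly as you say.

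For (2), the two alternatives you offer are both valid and both appear in the literature that the paper cites. The side-changing route — applying the exact functor $\omega_L\otimes_A(-)$ (Lemma \ref{lema:varios-eval-hom}, Corollaries \ref{coro1:main-1}, \ref{coro2:main-1}) to the Spencer resolution $\SP^\sbullet_L\to A$ of Proposition \ref{prop:SpE}, then matching $\omega_L\otimes_A\SP^{-n}_L$ with $\Omega^{d-n}_L(U)$ by contraction $\alpha\otimes v\mapsto\iota_v\alpha$ — is precisely the Lie--Rinehart version of the $\Omega^d\otimes\DD\simeq$ side-changing isomorphism in $\DD$-module theory; the only nontrivial bookkeeping is the sign-and-bracket check that the Spencer and de~Rham differentials correspond, which you rightly flag. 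The filtration route is the cleanest way to finish: the de~Rham differential of $\Omega^\sbullet_L(U)$ raises the order filtration by one, the associated graded complex is the Koszul cocomplex of $\Sim_A L$ on a basis of $L$ (a regular sequence), whose cohomology is $\omega_L$ concentrated in top degree, and the bounded-below exhaustive filtration gives convergence of the spectral sequence. Either way the argument is complete and matches what the cited references do.
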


\begin{corollary} \label{coro:omega}  The complex of right $U$-modules $\RR \Hom_U(A,U)$ is concentrated in degree $d$ and its $d$-cohomology $\Ext_U^d(A,U)$ is canonically isomorphic to the right $U$-module $\omega_L$.
\end{corollary}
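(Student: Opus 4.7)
The plan is to compute $\RR\Hom_U(A,U)$ by resolving $A$ via the Cartan--Eilenberg--Chevalley--Rinehart--Spencer complex, and then to identify the resulting complex with the ``de Rham'' complex $\Omega^\sbullet_L(U)$ whose homological behaviour has already been spelled out.

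First, I will invoke Proposition \ref{prop:SpE} applied to the free rank one $A$-module $E=A$: it tells us that $\SP^\sbullet_L$ is a resolution of $A$ by free left $U$-modules. Hence $\RR\Hom_U(A,U)$ may be represented by the complex $\Hom_U(\SP^\sbullet_L,U)$. Using the text preceding Proposition \ref{prop:omega}, where a canonical isomorphism of complexes $\Hom_U(\SP^\sbullet_L,M) \simeq \Omega^\sbullet_L(M)$ was given for any left $U$-module $M$, and specialising to $M=U$, I obtain a canonical isomorphism of complexes $\Hom_U(\SP^\sbullet_L,U) \simeq \Omega^\sbullet_L(U)$.

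Next I have to keep track of the right $U$-module structures. On $\Hom_U(\SP^\sbullet_L,U)$ the right $U$-action arises from the $(U,U)$-bimodule structure on the target $U$ (acting by right multiplication on values), and by inspecting the formulas defining the ``de Rham'' differential one checks that the canonical isomorphism transports this action to the right $U$-action on $\Omega^\sbullet_L(U) = \Omega^\sbullet_L \otimes_A U$ coming from right multiplication on the $U$ factor. In particular, the induced right $U$-structure on the top cohomology of $\Omega^\sbullet_L(U)$ is the one appearing in Proposition \ref{prop:omega}(1).

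Finally I appeal to Proposition \ref{prop:omega}(2): the complex $\Omega^\sbullet_L(U)$, augmented by $\alpha \otimes p \mapsto \alpha p$ at degree $d$, is a free resolution of $\omega_L$ as right $U$-module. Equivalently, the non-augmented complex $\Omega^\sbullet_L(U)$ (sitting in degrees $0,\dots,d$) has cohomology concentrated in degree $d$, with $H^d(\Omega^\sbullet_L(U)) \simeq \omega_L$ as right $U$-modules. Combining this with the preceding quasi-isomorphism shows that $\RR\Hom_U(A,U)$ is concentrated in degree $d$ and that $\Ext^d_U(A,U) \simeq \omega_L$ canonically as right $U$-modules. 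The only non-routine point in the whole argument is the verification that the canonical comparison between $\Hom_U(\SP^\sbullet_L,U)$ and $\Omega^\sbullet_L(U)$ is right $U$-linear, but this is a direct check on generators once one writes down the definitions.
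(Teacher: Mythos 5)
Your argument is correct and follows exactly the route the paper takes: resolve $A$ by $\SP^\sbullet_L$ (Proposition \ref{prop:SpE}), identify $\Hom_U(\SP^\sbullet_L,U)$ with $\Omega^\sbullet_L(U)$, and conclude from Proposition \ref{prop:omega}(2). Your extra remark tracking the right $U$-module structures fills in a small detail the paper leaves implicit, but it is the same proof.
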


\begin{proof} It is a consequence of the above propositions and the canonical isomorphisms $\RR \Hom_U(A,U) \simeq \Hom_U(\SP^\sbullet_L,U) \simeq \Omega^\sbullet_L(U)$.
\end{proof}

\begin{definition} \label{def:dualizing}
The right $L$-module $\omega_L$ is called the {\em dualizing module} of the Lie-Rinehart algebra $L$.
\end{definition}
\medskip

\begin{proposition} \label{prop:SpE_comp}
With the above notations, we have a canonical isomorphism of left $U$-modules
$ \widetilde{\SP^\sbullet_L}(E) \simeq  \widetilde{\SP^\sbullet_L} \otimes'_A E  $
where the tensor product $\otimes'_A$ is taken with respect to the $A$-module structure on each $\SP^{-n}_{L}$ induced by its left $U$-module structure.
\end{proposition}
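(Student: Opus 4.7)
The plan is to construct the isomorphism degree by degree by invoking Proposition~\ref{prop:nuevomain-2}, and then to verify that the resulting degreewise maps intertwine the differentials and augmentations.

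Applying Proposition~\ref{prop:nuevomain-2} with $N_0 = \bigwedge^n L$ and $M = E$ gives, for each $n = 0,\ldots,d$, a canonical left $U$-module isomorphism
\[
\tau'_n \colon \SP^{-n}_L(E) = U \otimes_A \Bigl[\bigwedge^{\!n} L \otimes_A E\Bigr] \xrightarrow{\ \sim\ } \Bigl[U \otimes_A \bigwedge^{\!n} L\Bigr] \otimes'_A E = \SP^{-n}_L \otimes'_A E,
\]
sending $P \otimes \omega \otimes e$ to $P \cdot \bigl([1 \otimes \omega] \otimes' e\bigr)$. The main step will be to show that these $\tau'_n$ commute with the differentials. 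On the right-hand side the differential is $d_{SP} \otimes' \id_E$, which only involves the ``right action on $U$'' and the ``bracket'' terms of the Spencer differential; it does \emph{not} contain the $\lambda_i$-action on $E$. The key observation is that this missing action is supplied by the left $U$-module structure on $\SP^{-n}_L \otimes'_A E$ itself: by the Leibniz formula recalled before Proposition~\ref{prop:nuevomain-2},
\[
\lambda \cdot \bigl([1 \otimes \omega] \otimes' e\bigr) \;=\; [\lambda \otimes \omega] \otimes' e \;+\; [1 \otimes \omega] \otimes' (\lambda e),
\]
and it is precisely this second summand, propagated through the inductive description of $\tau'$ on higher-degree generators, that reproduces (after the complementary Leibniz terms cancel in pairs) the summand $-\sum_i (-1)^{i-1} P \otimes (\lambda_1 \wedge \cdots \widehat{\lambda_i} \cdots \wedge \lambda_n) \otimes (\lambda_i e)$ appearing in the formula for $d^{-n}$.

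Concretely, I would check the identity $\tau'_{n-1} \circ d^{-n} = (d^{-n}_{SP} \otimes' \id_E) \circ \tau'_n$ on generators of the form $1 \otimes (\lambda_1 \wedge \cdots \wedge \lambda_n) \otimes e$: the case $n=1$ is a one-line computation, the case $n=2$ already illustrates how the ``extra'' Leibniz contributions $[1\otimes \lambda_j]\otimes'(\lambda_i e)$ cancel against the $E$-action terms of $d^{-n}$ while the bracket terms match directly, and the general case follows by the analogous combinatorial bookkeeping.

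Compatibility with the augmentations is similar but simpler: comparing $\widetilde{\SP^0_L}(E) \to E$, $P \otimes e \mapsto P e$, with $\varepsilon \otimes' \id_E \colon U \otimes'_A E \to A \otimes'_A E \simeq E$ (where $\varepsilon(P) = P \cdot 1_A$), the only contribution surviving $\varepsilon$ in the Leibniz expansion of $\tau'_0(P \otimes e) = P \cdot (1 \otimes' e)$ is $1 \otimes' (Pe)$, which maps to $Pe$. The main obstacle I anticipate is purely notational: organizing the many summands produced by iterating the Leibniz rule for the $U$-action on $\otimes'_A$ across an $n$-fold wedge product and matching them termwise against the three kinds of terms in $d^{-n}$; no new conceptual ingredient seems to be needed beyond Proposition~\ref{prop:nuevomain-2}.
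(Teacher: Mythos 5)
Your proposal follows the same route as the paper: invoke Proposition \ref{prop:nuevomain-2} with $N_0=\bigwedge^n L$ and $M=E$ to get the degreewise isomorphisms, then check commutativity with the differentials and the augmentations by direct computation on generators of the form $1\otimes(\lambda_1\wedge\cdots\wedge\lambda_n)\otimes e$ using $U$-linearity. Your extra remark—that the Leibniz rule for the $U$-action on $\otimes'_A$ supplies the missing $\lambda_i e$ terms, which cancel against the middle sum of $d^{-n}$—is exactly the content of the paper's ``straightforward computation,'' so the two arguments coincide in substance and detail.
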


\begin{proof} From Proposition \ref{prop:nuevomain-2} there are canonical isomorphisms
$$ \alpha^{-n}: \SP^{-n}_L(E) = U \otimes_A \left(\stackrel{n}{\bigwedge}L \otimes_A E\right) \simeq \left( U\otimes_A \stackrel{n}{\bigwedge}L\right) \otimes'_A E =
\SP^{-n}_L \otimes'_A E 
$$
for $n=0,\dots,d$. To prove the commutativity of the diagrams
\begin{equation*}
\begin{CD}
U \otimes_A \left(\stackrel{n}{\bigwedge}L \otimes_A E\right)  @>{\alpha^{-n}}>>  \left( U\otimes_A \stackrel{n}{\bigwedge}L\right) \otimes'_A E\\
@V{d^{-n}}VV    @VV{d^{-n}\otimes' \Id}V  \\
U \otimes_A \left(\stackrel{n-1}{\bigwedge}L \otimes_A E\right)  @>{\alpha^{-n+1}}>>  \left( U\otimes_A \stackrel{n-1}{\bigwedge}L\right) \otimes'_A E
\end{CD}
\end{equation*}
it is enough to check that the maps $\left( d^{-n}\otimes' \Id \right) \pcirc \alpha^{-n}$ and $\alpha^{-n+1} \pcirc d^{-n}$ coincide on elements of the form $1\otimes((\lambda_1\wedge\cdots\wedge\lambda_n)\otimes e)$, and this a straightforward computation. The commutativity corresponding to the augmentations 
\begin{equation*}
\begin{CD}
U \otimes_A  E  @>{\alpha^{0}}>>   U \otimes'_A E\\
@V{d^{0}}VV    @VV{d^{0}\otimes' \Id}V  \\
 E  @>{\text{\ref{internal-oper-nat-iso}}}>>  A \otimes'_A E
\end{CD}
\end{equation*}
can be checked in a similar way.

\end{proof}

\begin{corollary} \label{coro:dual-E}  Let $E$ be a left $U$-module which is a free $A$-module of finite rank. 
The complex of right $U$-modules $\RR \Hom_U(E,U)$ is concentrated in degree $d$ and its $d$-cohomology $\Ext_U^d(E,U)$ is canonically isomorphic to the right $U$-module $\omega_L \otimes_A E^*$.
\end{corollary}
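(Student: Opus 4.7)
The plan is to reduce the computation to the case $E=A$ (Corollary \ref{coro:omega}) through a chain of formal identifications. Since $E$ is $A$-free, by Proposition \ref{prop:SpE} the augmented Spencer complex $\SP^\sbullet_L(E)\to E$ is a free resolution of $E$ over $U$, so that $\RR\Hom_U(E,U)\simeq \Hom_U(\SP^\sbullet_L(E),U)$ as complexes of right $U$-modules.

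Combining Proposition \ref{prop:SpE_comp} (which identifies $\SP^\sbullet_L(E)\simeq \SP^\sbullet_L\otimes'_A E$), the Hom-tensor adjunction of Lemma \ref{lema:tecnico-1}, and the canonical isomorphism $\Hom_A(E,U)\simeq E^*\otimes_A U$ of Lemma \ref{coro:varios-can-hom} (applicable because $E$ is $A$-projective of finite rank), I would rewrite this as
$$ \Hom_U(\SP^\sbullet_L, E^*\otimes_A U) \;=\; \Omega^\sbullet_L(E^*\otimes_A U), $$
where $E^*\otimes_A U$ is endowed with the Leibniz left $U$-action of \ref{internal-oper} and with right $U$-action by right multiplication on the $U$-factor.

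The key untwisting step: applying Proposition \ref{prop:nuevomain-3} to the left $U$-module $M=E^*$, together with the canonical swap of tensor factors, produces a $(U,U)$-bimodule isomorphism $E^*\otimes_A U \simeq U\otimes_A E^*$ in which the left $U$-structure on the right-hand side is by scalar extension on the $U$-factor alone (while the right $U$-structure becomes the internal Leibniz one). Since this transported left $U$-action no longer involves $E^*$, the de Rham differential factors as $d_{\Omega^\sbullet_L(U)}\otimes \Id_{E^*}$, giving
$$ \Omega^\sbullet_L(E^*\otimes_A U) \;\simeq\; \Omega^\sbullet_L(U)\otimes_A E^* $$
as complexes of right $U$-modules.

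Finally, Proposition \ref{prop:omega}(2) states that $\Omega^\sbullet_L(U)\to \omega_L$ is a free resolution of $\omega_L$ as a right $U$-module; since $E^*$ is $A$-free, tensoring on the right with $E^*$ preserves exactness and produces a quasi-isomorphism $\Omega^\sbullet_L(U)\otimes_A E^* \to \omega_L\otimes_A E^*$ concentrated in degree $d$, with the right $U$-module structure on $\omega_L\otimes_A E^*$ being exactly the internal Leibniz structure of \ref{internal-oper}. The main obstacle I anticipate is the bookkeeping in the third paragraph: verifying that the composition of the canonical swap with $(\tau')^{-1}$ from Proposition \ref{prop:nuevomain-3} genuinely interchanges the ``internal-left, scalar-right'' bimodule structure with the ``scalar-left, internal-right'' one, and that the augmentation of the last step is right $U$-linear for the stated structures. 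Once this compatibility is checked, the rest is essentially formal.
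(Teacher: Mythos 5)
Your proposal is correct and follows essentially the same route as the paper's proof. You replace $E$ by the Spencer resolution $\SP^\sbullet_L(E)$ (Proposition \ref{prop:SpE}), identify it with $\SP^\sbullet_L\otimes'_A E$ (Proposition \ref{prop:SpE_comp}), apply the Hom–tensor adjunction of Lemma \ref{lema:tecnico-1} and the finite-rank identification of Lemma \ref{coro:varios-can-hom} to land in $\Omega^\sbullet_L(E^*\otimes_A U)$, untwist via the tensor swap of \ref{internal-oper-nat-iso} combined with Proposition \ref{prop:nuevomain-3} to reach $\Omega^\sbullet_L(U)\otimes_A E^*$, and conclude with Proposition \ref{prop:omega}(2) (equivalently Corollary \ref{coro:omega}) and $A$-flatness of $E^*$ — these are exactly the paper's steps, merely regrouped. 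The bimodule bookkeeping you flag in your last paragraph is indeed the only point requiring care, and your description of it is accurate: the swap $E^*\otimes_A U\simeq U\otimes'_A E^*$ followed by $(\tau')^{-1}$ trades the internal left/right-multiplication right structure for the scalar-extension left/internal Leibniz right structure, which is precisely what lets the differential of $\Omega^\sbullet_L(-)$ factor through the $U$-component alone.
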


\begin{proof} We have canonical $U$-right isomorphisms
\begin{eqnarray*}
&\RR \Hom_U(E,U) \stackrel{\ref{prop:SpE}}{\simeq} \Hom_U(\SP^\sbullet_L(E),U) \stackrel{\ref{prop:SpE_comp}}{\simeq} \Hom_U(\SP^\sbullet_L \otimes'_A E,U)  \stackrel{\ref{lema:tecnico-1}}{\simeq} &\\
& \Hom_U(\SP^\sbullet_L, \Hom_A(E,U)) \stackrel{\ref{coro:varios-can-hom}}{\simeq}  \Hom_U(\SP^\sbullet_L, E^*\otimes_A U) \stackrel{\ref{internal-oper-nat-iso}}{\simeq} &\\
& \Hom_U(\SP^\sbullet_L, U\otimes'_A E^*)   \stackrel{\ref{prop:nuevomain-3}}{\simeq} \Hom_U(\SP^\sbullet_L, U \otimes_A E^* )
\simeq &\\
& \Hom_U(\SP^\sbullet_L, U) \otimes_A E^* \simeq 
  R \Hom_U(A,U) \otimes_A E^* \stackrel{\ref{coro:omega}}{\simeq} \omega_L[-d] \otimes_A E^*.&
\end{eqnarray*}
\end{proof}

For any left $U$-module $M$, the $A$-module $\omega_L\otimes_A M$  has a canonical right $U$-module structure \ref{internal-oper} and it will be denoted by $M^{\text{right}}$. Similarly, for any right $U$-module $Q$, the $A$-module $\Hom_A(\omega_L,Q)$  has a canonical left $U$-module structure \ref{internal-oper} and it will be denoted by $Q^{\text{left}}$. Since $\omega_L$ is a free $A$-module of rank 1, the canonical maps 
$$  M \to ( M^{\text{right}} )^{\text{left}},\quad ( Q^{\text{left}} )^{\text{right}}\to Q$$
are $U$-linear isomorphisms (see Lemmas \ref{lema:varios-can-hom-2} and \ref{lema:varios-eval-hom}).
\medskip

Functors $M \mapsto M^{\text{right}}$ and $Q \mapsto Q^{\text{left}}$ form a couple of quasi-inverse exact functors and they establish an equivalence between the category of left $U$-modules and the category of right $U$-modules.
\medskip

\begin{proposition} \label{prop:finite-ghd} Assume that $A$ is an equicodimensional Noetherian regular ring. Then, the enveloping algebra $U$ of $L$ is a left and right Noetherian ring of finite global dimension $\leq \dim A + d$.
\end{proposition}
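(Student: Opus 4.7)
The plan is to transfer everything to the associated graded ring via the canonical filtration $(U_r)_{r\geq 0}$ on $U$ described in the preamble. By the Poincar\'e--Birkhoff--Witt theorem cited just after Proposition \ref{prop:nuevomain-3}, we have an isomorphism of graded $A$-algebras $\gr U \simeq \Sim_A L$, and since $L$ is free of rank $d$ over $A$, this yields $\gr U \simeq A[\xi_1,\dots,\xi_d]$, a polynomial ring in $d$ variables over $A$.

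First I would address Noetherianness. The ring $A[\xi_1,\dots,\xi_d]$ is both left and right Noetherian by Hilbert's basis theorem applied to the commutative Noetherian ring $A$. Now one applies the standard good-filtration argument: if $I\subset U$ is a left (resp.\ right) ideal, then $\gr I \subset \gr U$ is a graded ideal, hence finitely generated, and finitely many lifts of a set of generators of $\gr I$ to $I$ generate $I$. Thus $U$ is left and right Noetherian.

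Next I would bound the global dimension. Since $A$ is equicodimensional Noetherian regular, each localization $A_\mathfrak{m}$ at a maximal ideal is regular local of dimension $\dim A$, so $\text{gl.dim}(A)=\dim A$. The polynomial extension satisfies $\text{gl.dim}(A[\xi_1,\dots,\xi_d]) = \text{gl.dim}(A)+d = \dim A + d$ (the equicodimensionality ensures this equality holds globally, not just locally). Hence $\text{gl.dim}(\gr U) \leq \dim A + d$. The final step is the classical comparison theorem for filtered rings whose associated graded is Noetherian of finite global dimension (Bj\"ork's lemma, e.g.\ \cite[Ch.\ 2, Thm.\ 7.2]{MR1058984} or the analogous statement in Bj\"ork's book on rings of differential operators): for a good filtration on a finitely generated $U$-module $M$, a $\gr U$-projective resolution of $\gr M$ of length $\leq n$ lifts to a $U$-projective resolution of $M$ of length $\leq n$. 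Applied to an arbitrary cyclic module $U/I$, one obtains $\text{gl.dim}(U) \leq \text{gl.dim}(\gr U) \leq \dim A + d$.

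The routine part is the bookkeeping with filtrations; the main conceptual step is the comparison theorem $\text{gl.dim}(U) \leq \text{gl.dim}(\gr U)$, which I will cite rather than reprove. The only non-formal input is the equality $\text{gl.dim}(A[\xi_1,\dots,\xi_d])=\dim A + d$ for equicodimensional regular $A$, which follows by localizing at maximal ideals and invoking Serre's characterization of regular local rings.
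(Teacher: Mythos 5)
Your proof is correct and takes essentially the same route as the paper: both use Poincar\'e--Birkhoff--Witt to identify $\gr U$ with the polynomial ring $\Sim_A L \simeq A[\xi_1,\dots,\xi_d]$, note that this is commutative Noetherian regular of dimension $\dim A + d$, and then invoke the classical filtered-ring comparison (Noetherianness and global dimension bounds descend from $\gr U$ to $U$); the paper simply packages this by citing the ``Diff type'' framework of \cite[1.1]{meb_nar_dmw} rather than Bj\"ork's lemma directly. The only caveat is the citation \cite{MR1058984}: Huebschmann's Poisson cohomology paper is not where the filtered-ring comparison theorem lives, so you should cite Bj\"ork (or \cite{meb_nar_dmw} as the paper does) instead.
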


\begin{proof} By the Poincar\'e-Birkhof-Witt theorem, we know that $\gr U \simeq \Sim L$ is a commutative  Noetherian regular ring of pure graded dimension equal to $\dim A + d$. We deduce that $U$ is a ring of ``Diff'' type of global homological dimension $\leq \dim \gr U$  (cf. \cite[1.1]{meb_nar_dmw}).
\end{proof}

From now on we assume that $A$ is an equicodimensional Noetherian regular ring. 
\medskip

Let $\cd{b}{f}{U}$  be  the bounded derived category of left $U$-modules with finitely generated cohomologies.

\begin{definition}  \label{def:dual}
The duality functor $\mathbb{D}_L: \cd{b}{f}{U} \to \cd{b}{f}{U}$ is defined by
$$
\mathbb{D}_L (M) = \left(\RR \Hom_{U}(M, U)[d]\right)^{\text{\rm left}}.
$$
It is a contravariant involutive triangulated functor.
\end{definition}
\medskip

The following proposition is a consequence of Corollary \ref{coro:dual-E}.

\begin{proposition} \label{prop:dual-connection} For each left $U$-module $E$ which is a free $A$-module of finite rank we have a canonical isomorphism $\mathbb{D}_L(E) \simeq E^*$.
\end{proposition}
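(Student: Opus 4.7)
The plan is to obtain this as a direct consequence of Corollary \ref{coro:dual-E} combined with the side-changing functor $(-)^{\mathrm{left}}$ and Lemma \ref{lema:varios-can-hom-2}, since $\omega_L$ is free of rank one over $A$. So there is no real obstacle: the proposition is essentially an unwinding of definitions once the hard work of computing $\RR\Hom_U(E,U)$ has been done.

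First I would apply Corollary \ref{coro:dual-E}: because $E$ is a left $U$-module which is free of finite rank over $A$, the complex $\RR \Hom_U(E,U)$ is concentrated in degree $d$ and its cohomology is canonically isomorphic, as a right $U$-module, to $\omega_L \otimes_A E^*$. Shifting by $[d]$, we therefore have a canonical isomorphism in the bounded derived category of right $U$-modules
\[
\RR \Hom_U(E,U)[d] \simeq \omega_L \otimes_A E^*,
\]
the right-hand side being concentrated in degree $0$.

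Next I would apply the exact functor $(-)^{\mathrm{left}}$ from right $U$-modules to left $U$-modules, which by definition is $Q \mapsto \Hom_A(\omega_L,Q)$. Thus
\[
\mathbb{D}_L(E) \;=\; \bigl(\RR \Hom_U(E,U)[d]\bigr)^{\mathrm{left}} \;\simeq\; \bigl(\omega_L \otimes_A E^*\bigr)^{\mathrm{left}} \;=\; \Hom_A\!\bigl(\omega_L,\, \omega_L \otimes_A E^*\bigr).
\]
Finally, since $\omega_L = \Omega_L^d$ is a free $A$-module of rank one, Lemma \ref{lema:varios-can-hom-2} (applied with $Q_1 = \omega_L$ and $M_1 = E^*$) yields that the natural $A$-linear map
\[
E^* \longrightarrow \Hom_A\!\bigl(\omega_L,\, \omega_L \otimes_A E^*\bigr)
\]
is $U$-linear and is an isomorphism. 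Composing these canonical isomorphisms gives $\mathbb{D}_L(E) \simeq E^*$ as left $U$-modules, which completes the proof.
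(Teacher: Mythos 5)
Your proof is correct and follows exactly the route the paper intends: the paper merely remarks that the proposition ``is a consequence of Corollary \ref{coro:dual-E},'' and your argument is the explicit unwinding of that remark, using the shift $[d]$, the side-changing functor $(-)^{\mathrm{left}}=\Hom_A(\omega_L,-)$, and Lemma \ref{lema:varios-can-hom-2} (which the paper invokes just before the statement to justify $M\simeq(M^{\mathrm{right}})^{\mathrm{left}}$). No gap; same approach, just made fully explicit.
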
 

Let $L_0\to L$ be a map of Lie-Rinehart algebras over $(k,A)$ and assume that $L_0$ and $L$ are free $A$-modules of rank $d_0$ and $d$ respectively. Denote by $U_0 \to U$ the induced ring map between their enveloping algebras. 

\begin{definition} \label{def:rel-dualizing}
The {\em relative dualizing module} is by definition $\omega_{L/L_0} = \Hom_A(\omega_L,\omega_{L_0})$. It has a natural left $U_0$-module structure by \ref{internal-oper}.
\end{definition}

The relative dualizing module $\omega_{L/L_0} $ is a free $A$-module of rank 1.

\begin{example} \label{ejem:app-2}
Let $h:(\CC^d,0)\to (\CC,0)$ be a reduced equation of a germ of a free divisor $D\subset (\CC^d,0)$, $k=\CC$, $A= \OO_{\CC^d,0}$ is the ring of germs at the origin of holomorphic functions, $L_0=\fDer(-\log D)_0$ is the Lie-Rinehart algebra of germs of logarithmic vector fields with respect to $D$ and $L=\Der_\CC(\OO_{\CC^d,0})$. 
The dualizing module $\omega_L$ is the module of germs of top holomorphic forms $\Omega^d_{\CC^d,0}$ and the dualizing module
$\omega_{L_0}$ is the module of germs of top logarithmic forms $\Omega^d_{\CC^d,0}(-\log D)$, i.e. $\omega_{L_0} = \Omega^d_{\CC^d,0}(D) =(1/h)\cdot \omega_L$. So, the relative dualizing module $\omega_{L/L_0}$ is the module $\OO_{\CC^d,0}(D)= (1/h)\cdot \OO_{\CC^d,0}$ of germs of meromorphic functions with poles along $D$ of order at most 1.
\end{example}
\medskip

The following lemma is a consequence of Lemma \ref{lema:varios-eval-hom}.

\begin{lemma} \label{lema:aux} The canonical map $\omega_L \otimes_ A \omega_{L/L_0}  \to \omega_{L_0}$ is an isomorphism of right $U_0$-modules.
\end{lemma}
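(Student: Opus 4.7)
The canonical map in question is the evaluation map
\[
\mathrm{ev} : \omega_L \otimes_A \omega_{L/L_0} = \omega_L \otimes_A \Hom_A(\omega_L, \omega_{L_0}) \to \omega_{L_0},
\qquad \alpha \otimes \varphi \mapsto \varphi(\alpha).
\]
My plan is to deduce the statement directly from Lemma \ref{lema:varios-eval-hom}, applied to the Lie-Rinehart algebra $L_0$ rather than to $L$.

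First, I would spell out the $U_0$-module structures in play. The right $U$-module $\omega_L$ becomes a right $U_0$-module by restriction of scalars along the ring map $U_0 \to U$ induced by $L_0 \to L$; concretely this is the action $\alpha \cdot \lambda_0 := -L_{\rho(\lambda_0)} \alpha$ coming from Proposition \ref{prop:omega}(1) and the fact that $L_0 \to L \to \Der_k(A)$ is the anchor of $L_0$. The right $U_0$-module $\omega_{L_0}$ is the one given directly by Proposition \ref{prop:omega}(1) for $L_0$. Then $\Hom_A(\omega_L,\omega_{L_0})$ carries the natural left $U_0$-module structure of \ref{internal-oper}, making $\omega_L \otimes_A \Hom_A(\omega_L,\omega_{L_0})$ a right $U_0$-module, again by \ref{internal-oper}.

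Second, I would invoke Lemma \ref{lema:varios-eval-hom} with $U$ replaced by $U_0$, $Q_1 = \omega_L$ and $Q_2 = \omega_{L_0}$. That lemma ensures that the evaluation map $\mathrm{ev}$ is right $U_0$-linear (a formal check from the definitions of the structures in \ref{internal-oper}), and that it is an isomorphism as soon as $Q_1$ is free of rank $1$ over $A$. Since $L$ is a free $A$-module of rank $d$, the exterior power $\bigwedge^d L$ is free of rank $1$, and therefore $\omega_L = \Hom_A(\bigwedge^d L, A)$ is free of rank $1$ over $A$, so the hypothesis is satisfied.

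There is really no obstacle: the only point that deserves care is confirming that the two $U_0$-actions one can form on $\omega_L \otimes_A \Hom_A(\omega_L,\omega_{L_0})$ and $\omega_{L_0}$ agree with the ones for which Lemma \ref{lema:varios-eval-hom} was stated, and this is immediate once one notes that the right $U_0$-action on $\omega_L$ is the restriction of its right $U$-action (so that the Lie derivative formulas of Proposition \ref{prop:omega} are used consistently on both sides of $\mathrm{ev}$). Combining these observations gives that $\mathrm{ev}$ is an isomorphism of right $U_0$-modules, which is the statement of the lemma.
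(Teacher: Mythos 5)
Your proof is correct and takes exactly the route the paper has in mind: the paper simply notes that the lemma ``is a consequence of Lemma \ref{lema:varios-eval-hom}'' and leaves the details to the reader, which is precisely what you have filled in. The only small imprecision is the notation $L_{\rho(\lambda_0)}\alpha$: the Lie derivative $L_\lambda$ in Proposition \ref{prop:omega} is indexed by $\lambda\in L$ (its formula involves the bracket $[\lambda,-]$ in $L$, not merely the induced derivation of $A$), so the restricted right $U_0$-action on $\omega_L$ should be written $\alpha\cdot\lambda_0 = -L_{f(\lambda_0)}\alpha$ where $f:L_0\to L$ is the given map of Lie--Rinehart algebras, rather than via the anchor $\rho_0=\rho\pcirc f$. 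This does not affect the substance of your argument: the two right $U_0$-module structures on $Q_1=\omega_L$ and $Q_2=\omega_{L_0}$ are indeed the ones for which \ref{internal-oper} and Lemma \ref{lema:varios-eval-hom} (applied to the ring $U_0$) are stated, and $\omega_L\simeq\bigwedge^d L^*$ is free of rank $1$ over $A$, so the evaluation map is a right $U_0$-isomorphism.
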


\begin{remark} When we take $L=\Der_k(A)$ and the map $L_0 \to L$ as the anchor map of $L_0$, the relative dualizing module $\omega_{\Der_k(A)/L_0}$ coincides with the $A$-dual of the module $\mathcal{Q}_L$ considered in \cite[(6.6)]{MR2000f:53109}.
\end{remark}

The following theorem is a slight modification of Theorem 4.5 in \cite{calde_nar_ferrara} (see also \cite[Corollary 3.1.2]{calde_nar_fourier}). The existence of a such result has been suggested by \cite[Appendix A, Proposition (A.2)]{es_vi_86}, \cite[Proposition 2.2.5]{ucha_tesis} and \cite[Theorem 4.3]{cas_ucha_stek}. A related result is \cite[Lemma 4.3.3]{MR2000m:32015}.

\begin{theorem} \label{teo:duality-ferrara} Under the above hypotheses, for every bounded complex $M$ of left $U_0$-modules there is a canonical isomorphism in $\cd{b}{f}{U}$
$$ \mathbb{D}_L \left( U \Lotimes_{U_0} M \right) \simeq U \Lotimes_{U_0} \left(\omega_{L/L_0}[d-d_0] \stackrel{\phantom{L}}{\otimes}_A \mathbb{D}_{L_0}(M)\right).
$$
\end{theorem}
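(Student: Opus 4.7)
\noindent \textsc{Proof plan.} The plan is to compute $\mathbb{D}_L(U \Lotimes_{U_0} M)$ directly by applying $\RR\Hom_U(-,U)$ to an explicit free resolution of $U \Lotimes_{U_0} M$, lifted from a Spencer resolution of $M$ over $U_0$, and then to rearrange the result using the dualizing modules $\omega_L$, $\omega_{L_0}$, $\omega_{L/L_0}$ together with the associativity laws of the appendix. First I reduce to the case where $M$ is a single left $U_0$-module which is free of finite rank over $A$: the general bounded case then follows from the finite homological dimension of Proposition \ref{prop:finite-ghd} and the flatness of $U$ as right $U_0$-module (a standard PBW argument). In this $A$-free case Proposition \ref{prop:dual-connection} identifies $\mathbb{D}_{L_0}(M)$ with the $A$-dual $M^*$.

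Next, the Spencer resolution $\SP^\sbullet_{L_0}(M) \to M$ of Proposition \ref{prop:SpE}, extended by $U\otimes_{U_0}-$, yields a bounded free $U$-resolution of $U \otimes_{U_0} M$ whose $n$-th term is $U \otimes_A (\bigwedge^n L_0 \otimes_A M)$ by the associativity law of Theorem \ref{teoapp:main-1}. Applying $\Hom_U(-,U)$, the $A$–$U$ adjunction together with $\Hom_A(M,U) \simeq M^* \otimes_A U$ (valid since $M$ is $A$-free of finite rank) converts this into the de~Rham-style complex $\Omega^\sbullet_{L_0}(M^* \otimes_A U)$ of right $U$-modules, where the coefficient module carries its natural left $U_0$-action from \ref{internal-oper}. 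A mild extension of Proposition \ref{prop:omega}(2) — essentially the argument of Corollary \ref{coro:dual-E} with the target ring $U$ in place of $U_0$, combined with the fact that $U$ is free over $A$ by PBW — shows that this complex has cohomology concentrated in degree $d_0$, canonically isomorphic to the right $U$-module $\omega_{L_0} \otimes_A M^* \otimes_A U$.

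After the shift $[d]$ prescribed by $\mathbb{D}_L$ and the application of the side-switching functor $(\cdot)^{\text{left}} = \Hom_A(\omega_L,-)$, the identification $\omega_L \otimes_A \omega_{L/L_0} \simeq \omega_{L_0}$ of Lemma \ref{lema:aux} turns the above into the left $U$-module $\omega_{L/L_0} \otimes_A M^* \otimes_A U$ placed in degree $d_0 - d$. It remains to recognize this as $U \Lotimes_{U_0}\bigl(\omega_{L/L_0}[d-d_0] \otimes_A \mathbb{D}_{L_0}(M)\bigr)$, which is the point where the second associativity law (the $U_0$-version of Proposition \ref{prop:nuevomain-2} alluded to in Remark \ref{variants}) and the bimodule isomorphism of Proposition \ref{prop:nuevomain-3} come in: they let one move the coefficients $\omega_{L/L_0} \otimes_A M^*$ through the tensor product over $U_0$ while preserving the left $U$-module structure. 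The main obstacle throughout is exactly this bookkeeping of bimodule structures, since the object $U \otimes_A N$ carries two distinct left $A$-actions — one inherited from $A \subset U$ on the left, one from the left $U$-module structure on $U$ — which must be reconciled by the isomorphism $\tau'$ of Proposition \ref{prop:nuevomain-3} to make the above identifications canonical; the whole apparatus of the appendix is set up precisely for this purpose.
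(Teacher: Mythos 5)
Your proof has a genuine gap at the reduction step. The class of left $U_0$-modules that are free of finite rank over $A$ does \emph{not} generate the derived category $\cd{b}{f}{U_0}$: a bounded free resolution of a finitely generated $U_0$-module consists of finite-rank free $U_0$-modules, which are free over $A$ only of \emph{infinite} rank (by PBW, $U_0 \simeq \Sim L_0$ as an $A$-module). Neither Proposition~\ref{prop:finite-ghd} nor the flatness of $U$ over $U_0$ lets you replace those terms by $A$-finite ones, so the claimed reduction does not go through, and your explicit Spencer-resolution computation (which indeed requires $M$ to be $A$-free, and your use of Proposition~\ref{prop:dual-connection} requires finite $A$-rank) covers only a special case rather than the theorem as stated. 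Related to this, you never explain why the isomorphism you build in the special case is natural in $M$, which you would need in order to glue across a resolution even if a suitable one existed.

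The paper sidesteps all of this by working directly at the derived level: it tensors both sides by $\omega_L$, so the left-hand side becomes $\RR\Hom_U\bigl(U\Lotimes_{U_0}M,U\bigr)[d]$, then applies the change-of-rings adjunction $\RR\Hom_U(U\Lotimes_{U_0}M,U)\simeq\RR\Hom_{U_0}(M,U)$ and the flat base change $\RR\Hom_{U_0}(M,U)\simeq\RR\Hom_{U_0}(M,U_0)\Lotimes_{U_0}U$ — which is precisely where the finite homological dimension and PBW-flatness you invoke are actually used — to arrive at $\bigl(\omega_{L_0}\otimes_A\mathbb{D}_{L_0}(M)\bigr)\Lotimes_{U_0}U\,[d-d_0]$. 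On the other side, $\omega_L\otimes_A\bigl(U\Lotimes_{U_0}(\omega_{L/L_0}\otimes_A\mathbb{D}_{L_0}(M))\bigr)$ is rewritten using the associativity law of Corollary~\ref{coro3:main-1} and $\omega_L\otimes_A\omega_{L/L_0}\simeq\omega_{L_0}$ (Lemma~\ref{lema:aux}) to the same thing, and the theorem follows by applying $(-)^{\text{left}}$. No explicit resolution, no $A$-finiteness of $M$, no naturality issue. I would also flag that your ``mild extension of Proposition~\ref{prop:omega}(2)'' — concentration of $\Omega^\sbullet_{L_0}(M^*\otimes_A U)$ in degree $d_0$ with cohomology $\omega_{L_0}\otimes_A M^*\otimes_A U$ — is asserted rather than proved; making it precise requires the same bimodule bookkeeping (Corollary~\ref{coro2:main-1}, Proposition~\ref{prop:nuevomain-3}) and, unravelled, amounts to the adjunction plus flat base change that the paper uses directly. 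Reorganizing your argument around those two derived-category facts, rather than an $A$-finite Spencer resolution, would both close the gap and shorten the proof.
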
 

\begin{proof}  From Corollary \ref{coro3:main-1} and Lemma \ref{lema:aux}, there are canonical isomorphisms 
\begin{eqnarray*}
&\omega_L \otimes_A \left( U \Lotimes_{U_0} \left(\omega_{L/L_0} \stackrel{\phantom{L}}{\otimes}_A \mathbb{D}_{L_0}(M)\right) \right) \simeq &\\
&\left( \omega_L \stackrel{\phantom{L}}{\otimes}_A \left(\omega_{L/L_0} \stackrel{\phantom{L}}{\otimes}_A \mathbb{D}_{L_0}(M)\right) \right)   \Lotimes_{U_0} U\simeq &\\
&  \left( \left(\omega_L \otimes _A \omega_{L/L_0} \right)  \stackrel{\phantom{L}}{\otimes}_A \mathbb{D}_{L_0}(M)\right)   \Lotimes_{U_0} U \simeq 
\left( \omega_{L_0}  \otimes_A \mathbb{D}_{L_0}(M)\right)  \Lotimes_{U_0} U.
\end{eqnarray*}
By definition of $\mathbb{D}$, we have canonical isomorphisms
\begin{eqnarray*}
& \omega_L \otimes_A \mathbb{D}_L \left( U \Lotimes_{U_0} M \right) \simeq  \RR \Hom_{U}\left(U \Lotimes_{U_0} M, U\right)[d] \simeq &\\
& \RR \Hom_{U_0}(M, U)[d] \simeq 
 \left(\RR \Hom_{U_0}(M, U_0) \Lotimes_{U_0} U\right) [d] \simeq &\\ 
 &\left(\left(\omega_{L_0} \otimes_A \mathbb{D}_{L_0}(M)\right)[-d_0] \Lotimes_{U_0} U\right) [d] \simeq &\\
& \left(\left(\omega_{L_0} \otimes_A \mathbb{D}_{L_0}(M)\right) \Lotimes_{U_0} U\right) [d-d_0].
\end{eqnarray*}
We conclude by applying the functor $\left( - \right)^{\text{left}}$.
\end{proof}

\bigskip

{\small \noindent Departamento de \'Algebra \& Instituto de Matem\'aticas (IMUS)\\
 Facultad de  Matem\'aticas, Universidad de Sevilla,\\ Apdo. 1160, 41080
 Sevilla, Spain}. \\
{\small {\it E-mail}:  narvaez@algebra.us.es
 }

\end{document}